\newcolumntype{L}{>{$}l<{$}}
\newcommand{\stackyq}{\text{ }/\kern-0.3cm /\text{ }}
\newenvironment{manuallemma}[1]{%
  \manuallemmainner
}{\endmanuallemmainner}
\DeclareMathOperator{\Aut}{Aut}
\newcommand{\mr}{\mathrm}
\DeclareMathOperator{\Hom}{Hom}
\DeclareMathOperator{\Ext}{Ext}
\DeclareMathOperator{\colim}{colim}
\DeclareMathOperator{\QCoh}{QCoh}
\DeclareMathOperator{\Mod}{Mod}
\DeclareMathOperator{\Spf}{Spf}
\DeclareMathOperator{\Shv}{Shv}
\DeclareMathOperator{\Def}{Def}
\DeclareMathOperator{\Spec}{Spec}
\tikzset{node distance=2cm, auto}
\newcommand{\Z}{\mathbb{Z}}
\newcommand{\F}{\mathbb{F}}
\newcommand{\G}{\mathbb{G}}
\newcommand{\Q}{\mathbb{Q}}
\newcommand{\A}{\mathbb{A}}
\newcommand{\Lie}{\mathrm{Lie}}
\newcommand{\OO}{\mathcal{O}}
\newcommand{\et}{\acute{e} t}
\newcommand{\mc}{\mathcal}
\newcommand{\bb}{\mathbb}
\newcommand{\Art}{\ms{Art}}
\newcommand{\ms}{\mathsf}
\newcommand{\mf}{\mathfrak}
\newcommand{\pt}{\mr{pt}}
\definecolor{codegray}{gray}{0.9}
\newtheorem{defn}{Definition}[section]
\newtheorem{theorem}[defn]{Theorem}
\newtheorem{lemma}[defn]{Lemma}
\newtheorem{cor}[defn]{Corollary}
\newtheorem{example}[defn]{Example}
\newtheorem{theorem*}{Theorem}
\newtheorem*{construction}{Construction}
\theoremstyle{remark}
\newtheorem*{remark}{Remark}
\newtheorem*{question}{Question}
\newcommand{\ti}{\todo[inline]}
\DeclareFontFamily{U}{dmjhira}{}
\DeclareFontShape{U}{dmjhira}{m}{n}{ <-> dmjhira }{}
\DeclareRobustCommand{\yo}{\text{\usefont{U}{dmjhira}{m}{n}\symbol{"48}}}
\title{Modeling Group Actions on Stacks \\ (Especially the Lubin-Tate Action)}
\author{Rin Ray}
\date{\today} 
\begin{document}

\begin{abstract} Suppose we are given a profinite group $G$ acting on a formal moduli stack $\mc{M}$, and we want to understand the group action, and compute cohomology related to this group action. How can we do it? 

This prolegomenon surveys two methods of pinning down such an action:  geometric modeling and the two tower method. We highlight their use on a specific action - the automorphisms of a formal group acting on its deformation space, called the Lubin-Tate action. 

%This contextualizes ongoing work on the Lubin-Tate action. 
%We explain conditions for a moduli stack to model an action, and in particular explain conditions for a moduli stack to model the Lubin-Tate action. 
\end{abstract}
\maketitle

\begin{center}\includegraphics[width=7cm]{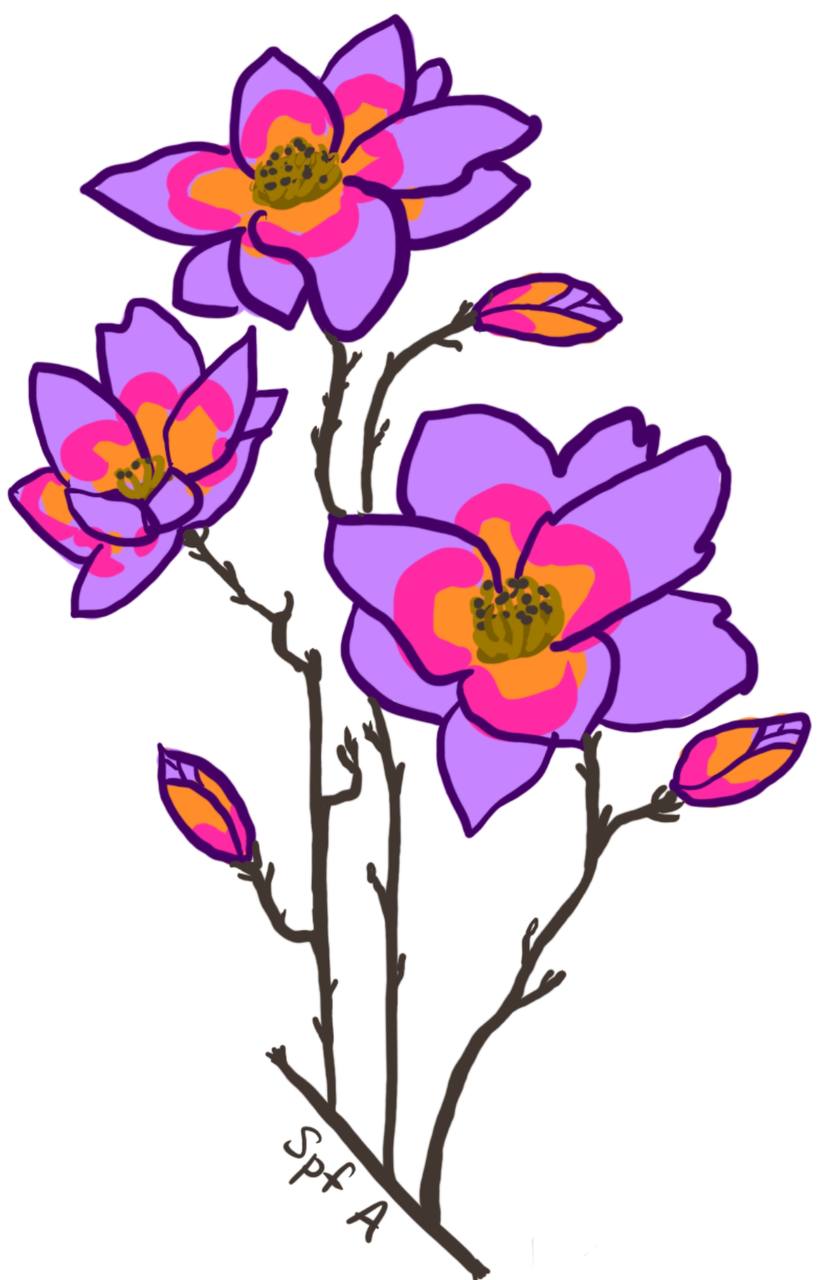}\end{center}

\newpage
\tableofcontents

\section*{Introduction} 
Suppose we are given a pro-finite group $G$ acting on a formal moduli stack $\mc{N}$, and we want to understand the group action, and compute cohomology related to this group action. How can we do it? When and how can we capture information about a group action on a moduli stack $\mc{N}$ by using a more understandable group action on a different moduli stack?

In this survey article, I will exposit two main methods, which for lack better terms, I refer to as geometric modelling and the two-tower method. 

\begin{enumerate}
\item Geometric Modelling:  Given $M, N$ prestacks, both carrying a $G$-action, and a functor $\mc{F} \colon M \to N$ which is a $G$-equivariant equivalence.
\[\begin{tikzcd}
	M & N
	\arrow["G", from=1-1, to=1-1, loop, in=145, out=215, distance=10mm]
	\arrow["{\mc{F}}", from=1-1, to=1-2]
	\arrow["\simeq"', from=1-1, to=1-2]
	\arrow["G", from=1-2, to=1-2, loop, in=325, out=35, distance=10mm]
\end{tikzcd}\] \noindent then, there is an isomorphism of quotient stacks
$$M/ G \simeq N/G$$
\item Two Tower Method: Given $N, N', \text{and } M$ prestacks, such that $M$ is a $G \times G'$-torsor, a $G$-torsor over $N$, and a $G'$-torsor over $N'$, 

\[\begin{tikzcd}
	& M \\
	N & {} & N'
	\arrow["{G \times G'}", from=1-2, to=1-2, loop, in=55, out=125, distance=10mm]
	\arrow["G"', from=1-2, to=2-1]
	\arrow["{{{G'}}}", from=1-2, to=2-3]
	\arrow["{{G'}}", from=2-1, to=2-1, loop, in=190, out=260, distance=10mm]
	\arrow["G", from=2-3, to=2-3, loop, in=280, out=350, distance=10mm]
\end{tikzcd}\]

\noindent then, the quotients of objects by the residual actions are isomorphic, \noindent $$N /G' \simeq N'/G.$$
\end{enumerate} 

%Geometric modelling in this generality is quite silly, but it becomes quite interesting when we consider the special case of deformation problems.

We are primarily concerned with and motivated by one action in particular which arises in several rich and related contexts. It turned out to be the case that in order to establish a clean framework to discuss that action, it arises as an example of a general framework which works for all stacks which are locally quotients of affines by profinite groups.  We fill a gap in the literature in the discussion of such stacks, which lay in between Deligne Mumford stacks, which are locally quotients of finite groups, and Artin stacks, which are locally quotients by algebraic groups. 

We set up the sites so that the coherent cohomology of our stacks gives us continuous group cohomology of the global sections of their structure sheaves; the finite case for usual group cohomology being a special case. %The reader with interest only in finite groups can happily cover their eyes on this point and skip the stack background section. 

\begin{manuallemma}{A} Given a quasi-coherent sheaf $\mc{F} \in \QCoh(X/G)$, then we have an isomorphism
$$H^*(X/G, \mc{F})\simeq H^*_{cts}(G, \mc{F}(X))$$ \end{manuallemma}

The action of primary interest is that of the automorphisms  $\Aut_k(F)$ of a one-dimensional formal group $F$ on its deformation stack. This action is colloquially referred to as the ``Lubin-Tate action," due to its original appearance in seminal work of Lubin and Tate \cite{LT} during their consideration of a $p$-adic analogue of the theory of complex multiplication in local class field theory. 

This action crucially appears in the crux of computing homotopy groups of spheres \cite{quill} \cite{goerss} and in the Jacquet-Langlands correspondence \cite{carayol83} \cite{carayol90}. Further, the understanding of this action would resolve the remaining unitary case of the Hodge orbit conjecture regarding the density of the Hecke action, as the stabilizer of the Hecke action at a point is $\Aut_k(F)$ \cite{chai}. 

%This paper answers this question. In particular, we focus on the case of stacks which are deformation moduli problems, and carry an action by a pro-finite group. 

% 

%This paper originally stemmed from the authors frustration with the current literature on ``geometrically modelling" the Lubin-Tate action, especially literature covering the case of finite subgroups of the Lubin-Tate action (which capture torsion information of the homotopy groups of spheres). 

%Given $G \subseteq \G_h := \Aut(F)$, this paper answers the question ``What is required for a stack $\mc{M}$ with an action of $G'$ to model the action of $G \subseteq \Aut(F)$ on $\Def_F$?'' 

\subsection{Examples of Modeling the Lubin-Tate Action}
Here is a list of examples of stacks that model the Lubin-Tate action that have been considered and shown to model the action to varying degree.  % (at various levels of understandibility, usability, and rigor). 

Let us consider a formal group $F$ of height $h$ over a field $k$ of positive characteristic, and possibly with decoration (graded formal group, formal group with level structure, formal $R$-module, etc). We call the stack of one dimensional formal groups (possibly with decorations) $\mc{M}_{\mr{fg}_1}$. Formal groups over finite fields are classified up to an invariant called height. The deformations and automorphisms of formal groups $F'$ and $F$ of the same height are thus equivalent. The group $\Aut_k(F)$ is a profinite group, and the units of a $p$-adic division algebra. 

\[\begin{tikzcd}
	{\Aut_k(F)} & {\Aut_k(F')} \\
	{\Def_F} & {\Def_{F'}}
	\arrow["\simeq", from=1-1, to=1-2]
	\arrow[from=2-1, to=2-1, loop, in=55, out=125, distance=10mm]
	\arrow["\simeq", from=2-1, to=2-2]
	\arrow[from=2-2, to=2-2, loop, in=55, out=125, distance=10mm]
\end{tikzcd}\]

Geometric modelling is usually done for finite subgroups $G \subseteq \Aut_k(F)$. The Lubin-Tate action is in some sense too floppy when considered just acting on the formal scheme, so we use a much more rigid setting with algebreo-geometric structure which restricts or maps to the Lubin-Tate action in order to compute it. We craft a puppet which shows us the secret dance. 

In this setting, our first task is finding an object $X$ such that $G \subseteq \Aut_k(X)$, and a map $\mc{F}$ such that $\mc{F}(X)$ is a one dimensional formal group of height $h$ (possibly with decoration). Next, we find a prestack $M$, such that $X \in M(k)$, and construct a functor $$\mc{F}: M \to \mc{M}_{\mr{fg}_1},$$ which induces an equivalence on the deformation problems.

\[\begin{tikzcd}
	{G \subseteq \Aut_k(X)} & {\Aut(\mc{F}(X))} \\
	{\Def_X} & {\Def_{\mc{F}(X)}}
	\arrow[hook, from=1-1, to=1-2]
	\arrow[from=2-1, to=2-1, loop, in=55, out=125, distance=10mm]
	\arrow["\simeq"', from=2-1, to=2-2]
	\arrow["{\mc{F}}", from=2-1, to=2-2]
	\arrow[from=2-2, to=2-2, loop, in=55, out=125, distance=10mm]
\end{tikzcd}\]

\noindent then, there is an isomorphism of quotient stacks $$\Def_X/G \simeq \Def_{\mc{F}(X)}/G.$$

Let's speed through some examples of such geometric modelling. Let $G \subseteq \Aut_k(F)$ be a finite subgroup.
\begin{itemize}
\item Let $E$ be a supersingular elliptic curve (with Drinfeld level-$N$ structure) such that $G \subseteq \Aut_k(E)$. All that is said below works with and without level structure, and the level-$N$ chosen depends on the prime of interest. Thanks to Serre-Tate \cite{st}, we know that deformations of an elliptic curve coincide with deformations of its formal group $\mc{F}(E, L)$, which is isomorphic to any other formal group of the same height $F$ (compatibly with level structures). The moduli stack of elliptic curves $\mc{M}_{1, 1}^{N}$ completes at a point $(E, L)$ to a deformation problem of the elliptic curve $(E, L)$.
\[\begin{tikzcd}
	& {\Aut_k(E)} & {\Aut(\mc{F}(E, L))} \\
	{\mc{M}_{1, 1}^{\mr{lvl }N}} & {\Def_{(E, L)}} & {\Def_{\mc{F}(E, L)}} & {}
	\arrow[hook, from=1-2, to=1-3]
	\arrow["{{(-)^\wedge_{(E, L)}}}", from=2-1, to=2-2]
	\arrow[from=2-2, to=2-2, loop, in=55, out=125, distance=10mm]
	\arrow["\simeq", from=2-2, to=2-3]
	\arrow[from=2-3, to=2-3, loop, in=55, out=125, distance=10mm]
\end{tikzcd}\]
\noindent The stack $\mc{M}_{1, 1}^{\text{lvl }N}$ is the underlying stack of the spectral stack of topological modular forms $\mr{TMF}(N)$ with appropriate level structure. This was originally constructed by Hopkins-Miller, Goerss-Hopkins constructed it as an $E_\infty$-ring spectra, and then Lurie gave a conceptual approach using spectral algebraic geometry \cite{luriesurvey} \cite{arbeitsgemeinschaft}. $\mr{TMF}$ has been used extensively to explore homotopy groups of spheres at height 1 and 2 \cite{vesna} \cite{dylan} \cite{meier}. Points on $\mr{TMF}$ are Morava $K(1)$ and $K(2)$, and neighborhoods of such points are $E(1)$ and $E(2).$ The first Morava $E$-theory was constructed by Morava \cite{moravaforms} by considering the Tate curve as a deformation neighborhood of $\G_m$ in compactified $\mc{M}_{1, 1}$, and using this to deform $KU/p$. 
\item At height $p-1$, the modern perspective on this will be covered in next paper; previous versions were introduced by Gorbunov-Mahowald \cite{gm} and used by \cite{Rav} to solve the Kevaire invariant problem at all primes $p>5$. We construct the minimal genus curve $X$ such that $G \subseteq \Aut_k(X)$. Then, we work to construct a functor $\mc{F}$ such that $\mc{F}(X, G)$ is a one dimensional formal group of height $p-1$ and $\mc{F}$ induces a $G$-equivariant isomorphism:
\[\begin{tikzcd}
	{G \subseteq \Aut_k(X)} & {\Aut(\mc{F}(X, G))} \\
	{\Def_{(X, G)}} & {\Def_{\mc{F}(X, G)}} & {}
	\arrow[hook, from=1-1, to=1-2]
	\arrow[from=2-1, to=2-1, loop, in=55, out=125, distance=10mm]
	\arrow["\simeq", from=2-1, to=2-2]
	\arrow[from=2-2, to=2-2, loop, in=55, out=125, distance=10mm]
\end{tikzcd}\] The global spectral stack $\mr{eo}_{p-1}$ for which this is an underlying local neighborhood was constructed by Hill \cite{hillphd}. 
\item The following example works for the full profinite group $\Aut_k(F)$. Consider a stack $\mc{S}$ which is a PEL Shimura variety for $U(1, h-1)$. This is a moduli stack of abelian varieties with extra structure. In particular, their formal groups are $h$ copies of the same height $h$ one-dimensional formal group, thus there is a natural projection from a given $s \in \mc{S}(k)$ to one copy of a height $h$ one dimensional formal group over $k$. \[\begin{tikzcd}
	{\text{Hecke}} & {\Aut(s)} & {\Aut_k(F)} \\
	S & {\Def_s} & {\Def_{F}} \\
	\arrow["{\mr{Stab}}", from=1-1, to=1-2]
	\arrow[from=1-2, to=1-3]
	\arrow[from=2-1, to=2-1, loop, in=55, out=125, distance=10mm]
	\arrow["{(-)^\wedge_{s}}", from=2-1, to=2-2]
	\arrow[from=2-2, to=2-2, loop, in=55, out=125, distance=10mm]
	\arrow["\simeq", from=2-2, to=2-3]
	\arrow[from=2-3, to=2-3, loop, in=55, out=125, distance=10mm]
\end{tikzcd}\] This stack was first considered by Carayol in his exploration of the Jacquet-Langlands correspondence \cite{carayol90}, and later by Rapport-Zink in their consideration of $p$-adic period morphisms and non-archimedean uniformization theorems for general Shimura varieties \cite{rappoportzink}. The stack $\mc{S}$ is the underlying stack of the spectral stack of topological automorphic forms $\mr{TAF}$ which was constructed and considered by Behrens-Lawson and Hill \cite{taf} \cite{taf2} \cite{tafhill}.
\end{itemize}

The two-tower methodology is the main connection between chromatic homotopy theory and the Jacquet-Langlands correspondence which studies the relationship between $GL_h(\Q_p)$ and $D^\times$. 

\begin{itemize} 
\item For the full profinite group $\G_h := \Aut_k(F)$, note that $\OO_D^\times \simeq \Aut_k(F)$ for $D$ a $\Q_p$ division algebra with Hasse invariant $1/h$. Let $(\mc{H}^{h-1}_{\breve{\bb{Q}}_p})^{\diamond}$ be the diamond of the Drinfeld upper half space, and let $(\Def_F^\star)^\diamond$ denote the $\Q_p$-diamond of $\Def_F^\star.$ The functor of points of the torsor $\mc{M}$ may be described as $$[S \mapsto \Hom_{\OO_{FF}}(\OO^{\oplus(h)}, \OO(\tfrac{1}{h}))]$$ where $FF$ is the Fargue-Fontaine curve.
\[\begin{tikzcd}
	& {\mc{M}} \\
	{(\mc{H}^{h-1}_{\breve{\bb{Q}}_p})^\diamond} && {(\Def_F^\star)^\eta}
	\arrow["{{D^\times \times GL_h(\bb{Q}_p)}}", from=1-2, to=1-2, loop, in=55, out=125, distance=10mm]
	\arrow["{D^\times}"', from=1-2, to=2-1]
	\arrow["{GL_h(\bb{Q}_p)}", from=1-2, to=2-3]
	\arrow["{GL_h(\Q_p)}", from=2-1, to=2-1, loop, in=190, out=260, distance=10mm]
	\arrow["{D^\times}", from=2-3, to=2-3, loop, in=280, out=350, distance=10mm]
\end{tikzcd}\]

\noindent then, there is an isomorphism of quotient stacks $$(\mc{H}^{h-1}_{\breve{\bb{Q}}_p})^\diamond/GL_h(\Q_p) \simeq (\Def_F^\star)^\eta/D^\times.$$ This allows us to get a handle on the rational (torsion-free) information of the action of $\Aut_k(F)$ on $\Def_F.$ This was used to great affect by Barthel-Schlank-Stapleton-Weinstein \cite{rats} to resolve the rational chromatic vanishing conjecture.
\item A mod $p$ version of the two tower correspondence was constructed to resolve the transchromatic splitting conjecture in work in progress by the author, T. Barthel, T. Schlank, L. Mann, P. Srinivasan, J. Weinstein, Y. Xu, Z. Yang, and X. Zhou. This reduces to the comparison of the following quotient stacks:
$$LT_{h-1, h}^\diamond/\bb{G}_h \simeq BC\big(\tfrac{-1}{h-1}\big)^*/\G_{h-1} \times \Z_p^\times,$$ \noindent where the functor of points of $BC(\frac{-1}{h-1}) = [S \mapsto \Ext^1_{\OO_{FF_S}}(\OO(\tfrac{1}{h-1}), \OO)]$ and $FF_S$ is the relative Fargue-Fontaine curve.
\end{itemize}

\subsection{Context as a Preface}
This paper is the first in a trilogy stemming from the author's PhD thesis. The aim of the thesis is to construct arithmetically interesting and geometrically understandable stacks which model the Lubin-Tate action for all maximal finite subgroups $G$ and at all heights simultaneously (with a focus on those capturing $p$-torsion information). The construction of these stacks is guided by the ideology that the finite subgroup $G$ completely determines the construction of such a stack of decorated curves, and that such an approach lends itself to induction. This first paper in the series defines and establishes what it means to geometrically model the Lubin-Tate action, which is required to establish in the finite case for the thesis's undertaking. The rest of this paper came about because, once we had the ``right language" it was clean to also include the more general pro-finite case and two-tower case as well. We have not yet seen a paper distill, relate, and collect these methods. Please enjoy.
\section*{Acknowledgements} 

Thanks to Sasha Shmakov, Lucas Piessevaux, Nathan Wenger, and Luozi Shi for reading over the draft. I would like to thank Grigory Kondyrev for encouraging me to write down the first version of this in 2018. Upon attending the 2025 Masterclass in Copenhagen on Arithmetic and Homotopy Theory and hearing the confusions of the audience, I was convinced this paper would be a helpful resource for the community, and decided to finally complete it and share it with the world. 

This paper is the first of 3 papers on expeditions constituting my PhD thesis, and I would like to thank Paul Goerss for advising me during graduate school and allowing me the great honour of being his last student. During that time I was supported as a fellow by the NSF GRFP under Grant Number DGE 1842165. As a postdoc in the Dynamics–Geometry–Structure group at Mathematics M\"unster, where I finished this paper, I was funded by the DFG under Germany's Excellence Strategy EXC 2044–390685587.

%\includes{intro}

%For readers who care only about finite groups (finite subgroups of the Lubin-Tate action), you are free to and even encouraged to skip the site-hullaballoo as it is totally unecessary in that case. We must carefully set up our sites because we wish for the cohomology of stacky quotients by a profinite group to give us continuous group cohomology. In the case of finite groups, it is wholly unneccessary.
\section{Ode to Profinite Group Actions on Deformation Stacks}

%\ti{add Piotr $F_{p^h}$}
%\ti{add equivalence of two different deformation problems for formal groups}

%first we write naively without thought and then we think more about it.

%rewriting notion of deformation stack of a category 

\subsection{Ode to Stacks}

\label{stacks} %(Much of this section is directly taken from Mann-Heyer.)

%$G$ finite group, $Rep(G) : G$-rep ($k$-linear etc. inherited from a point).

\begin{comment}
\begin{defn}
    For a ring $A$, we let $\mathrm{IndSp\'et}(A)$ denote the small ind-étale site of $A$. This is the site whose objects are ind-étale maps $\Spec(B)\to \Spec(A)$, and covers are ind-étale covers (between ind-étale maps over $A$). For a scheme $X$, $X_{ind\et}$ denotes the small étale site of that scheme, defined analogously. 
\end{defn}
\begin{remark}
    The adjective ``small'' refers to the fact that only étale maps to $A$ are allowed, whereas the ``ind-étale site'' could refer to arbitrary ring maps (with some size restriction for set theory) with étale covers as covers. Since we will only consider this variant, ``ind-étale site'' refers to the small ind-étale site. 
\end{remark}
\end{comment}

%\rin{is fppf site equivalent to the site of finite continous G-sets and surjections (for the group $G_i$ which comes as the limit of fppf covers which are $G_i$-torsors???) in an analogous way to the case of etale site equivalent to site of continuous G-sets?}

\begin{defn} We denote $\mr{ProFin} := \mr{Pro(Fin)}$ to be the Pro-category of the category of finite sets. This is an ordinary category whose objects are cofiltered diagrams $S = (S_i)_{i \in I}$ of finite sets $S_i$, and whose morphisms are $$\Hom_{\mr{ProFin}}((T_j)_{j \in J}, (S_i)_{i \in I}) = \lim_{i \in I} \underset{j \in J}\colim \Hom_{\mr{Fin}}(T_j, S_i).$$
\end{defn}

\begin{defn} We equip $\mr{ProFin}$ with the site where covers are those sieves $(\mr{ProFin})_{/S}$ that contain a finite family of maps $(S_n \to S)_n$ which are jointly surjective on the underlying sets. We denote by
$\mr{Cond(Ani)} := \mr{HypShv(ProFin)}$
the category of $\mr{Ani}$-valued hypersheaves on $\mr{ProFin}$.
\end{defn}

\begin{remark} We implicitly fix an uncountable strong limit cardinal $\kappa$.  \end{remark}

%Let $\mr{Cond}(\mr{Ani})$ be the category of condensed anima, i.e. the topos of sheaves on the site of ($\kappa$-small) profinite sets. 

\begin{remark} This contains the category of ($\kappa$-small) locally compact Hausdorff spaces as a full subcategory, but also allows stacky phenomena, for example, one can form the classifying stack $\pt/G$ of a locally profinite group $G$. \end{remark}

\begin{defn} (1) Let $\mc{C}$ be an $\infty$-site and $\mc{D}$ be $\mr{Cond}(\mr{Ani})$, then a pre-stack on $\mc{C}$ with values in $\mc{D}$ is a pre-sheaf (i.e. a functor) $$\mc{X}: \mc{C}^{op} \to \mc{D}.$$

\noindent (2) A stack on $\mc{C}$ with values in $\mc{D}$ is a sheaf, i.e. $\mc{X}$ satisfies for every open covering family $\{ U_i \to U\}_{i \in I}$ that the following is a homotopy limit in $\mc{D}$.

\[\begin{tikzcd}[column sep=small]
	{\mc{X}(U)} & {\displaystyle\prod_{i \in I} \mc{X}(U_i)} & {\displaystyle\prod_{i_1, i_2 \in I} \mc{X}(U_{i_1} \times_U U_{i_2})} & {\displaystyle\prod_{i_1, i_2, i_3 \in I} \mc{X}(U_{i_1} \times_U U_{i_2} \times_U U_{i_3})} & \cdots
	\arrow[from=1-1, to=1-2]
	\arrow[shift left, from=1-2, to=1-3]
	\arrow[shift right, from=1-2, to=1-3]
	\arrow[shift right=2, from=1-3, to=1-4]
	\arrow[shift left=2, from=1-3, to=1-4]
	\arrow[from=1-3, to=1-4]
	\arrow[shift right, from=1-4, to=1-5]
	\arrow[shift left, from=1-4, to=1-5]
	\arrow[shift right=3, from=1-4, to=1-5]
	\arrow[shift left=3, from=1-4, to=1-5]
\end{tikzcd}\]

%A stack is a sheaf of an $\infty$-category $\mc{D}$ over a site $\mc{C}$ $$\mc{X}: \mc{C}^{op} \to \mc{D}$$ which satisfies the sheaf condition and has flat descent. \rin{do we want $\mc{D}$ to be $\infty$-groupoids or anima?} %We also require it preserves small colimits of representable functors. 
\end{defn}

\begin{defn} We say a morphism of stacks $f: X \to Y$, has property $P$ if for all affines $S \to Y$, the map $S \times_Y X \to S$ has propery $P.$ \end{defn}

\begin{example} 
We will consider the following sites for a base scheme $S\in\mr{Sch}$:
\begin{itemize} 
\item The (small) \'etale site $S_{et}$ is the full subcategory of $\mr{Sch}_{/S}$ on \'etale morphisms of schemes $f:X\to S$ with covering families given by jointly surjective families $\{f_i:X_i\to S\}_{i\in I}$ of \'etale morphisms.
\item The (small) pro-\'etale site $S_{proet}$ is the full subcategory $\mr{Sch}_{/S}$ on pro-\'etale (or weakly \'etale) morphisms of schemes $f:X\to S$ with topology induced by the fpqc topology.
\item The (big) fppf site $\mr{Sch}_{fppf}/S=(\mr{Sch}/S)_{fppf}$ with covering families given by jointly surjective families $\{f_i:X_i\to S\}_{i\in I}$ of morphisms which are flat and locally of finite presentation (see \cite[Section 021L]{stacks-project}).
\item The (big) fpqc site $\mr{Sch}_{fpqc}/S=(\mr{Sch}/S)_{fpqc}$ with covering families given by jointly surjective families $\{f_i:X_i\to S\}_{i\in I}$ of morphisms which are faithfully flat and quasicompact (see \cite[Section 03NV]{stacks-project}). 
\end{itemize}
\end{example}

\begin{defn} (1) An affine formal algebraic space over $S$ is a sheaf $\mc{X}$ on the fppf site of $S$ which admits a description as an Ind-scheme $X \simeq \lim_i X_i$, where the $X_i$ are affine schemes and the transition morphisms are thickenings.

\noindent (2) A formal algebraic space over $S$ is a sheaf $\mc{X}$ on the fppf site of $S$ which receives a morphism $\amalg U_i \to \mc{X}$ which is representable, \'etale, and surjective, and whose source is a disjoint union of affine formal algebraic spaces $U_i$. 
\end{defn}

\begin{defn} Let $\mc{X}$ be a stack in groupoids on the fppf site of a scheme $S$. We say that $\mc{X}$ is a formal algebraic stack if it admits a pro-etale surjection $\mc{U} \to \mc{X}$ from a formal algebraic space $\mc{U}$.

%if there is a morphism $\mc{U} \to \mc{X}$ whose domain $U$ is a formal algebraic space, and which is representable by algebraic spaces, etale and surjective. %smooth
\end{defn}

\begin{remark}
A formal algebraic space is ind-\'etale, and an \'etale map from it is ind-\'etale. 
\end{remark}

\begin{remark} Emerton \cite{emerton2020formal} uses a stronger definition which is representable by algebraic spaces, smooth and surjective. In other words, he works with the analog of Artin stacks, whereas we are working with the proetale topology. This lets us work with profinite stacks (locally quotients by a profinite group), which lay in between Artin stacks and DM stacks (quotients by a finite group).
\end{remark}

%\rin{QCoh and O defn Khan}

%\begin{defn}Define $\QCoh: \mathrm{Stacks}^{op} \to \mr{Cat}$ such that $\QCoh(\Spec A) = Mod_A$ and the functor preserves small limits. \end{defn}

\begin{defn} 
Let $\mc{X}$ be a stack. We define $\mr{QCoh}: \mr{Stk}^{op} \to \mr{Cat}$ as the right Kan extension of the presheaf $\mr{Spec}(R) \mapsto \mr{Mod}_R$ along the inclusion $\mr{Aff} \hookrightarrow \mr{Stk}$, where $\mr{Stk}$ is the $\infty$-category of stacks. In other words: 
\begin{align*} 
\QCoh(\mc{X}) &\simeq \QCoh(\underset{\Spec A \to \mc{X}}\colim \Spec A) \\
& \simeq \lim_{\Spec A \to \mc{X}} \QCoh(\Spec A) \\
& \simeq \lim_{\Spec A \to \mc{X}} \Mod_A \\
\end{align*}

%We define the category $\mr{QCoh}(\mc{X}) := \lim_{(R, x)} \mr{Mod}_{R}$ over the 
\end{defn}

By definition, a quasi-coherent sheaf $\mc{F}$ on a stack $\mc{X}$ amounts to the following data: 
\begin{itemize} 
\item For every $\Spec A \xrightarrow{x} \mc{X}$, the datum of an $A$-module $x^*(\mc{F})$, 
\item For every ring homomorphism $A \to B$, where the image of $x$ to the image of $y$ in $\mc{X}$, 
\[
\begin{tikzcd}
\Spec B \arrow[r] \arrow[rr, "y"', bend right] & \Spec A \arrow[r, "x"] & \mc{X}
\end{tikzcd}
\]

then the datum of a $B$-module $y^*(\mc{F})$ with a prescribed isomorphism of $B$-modules $x^*(\mc{F}) \otimes_A B \simeq y^*(\mc{F}).$
\end{itemize}

\begin{example} For $\pt = \Spec k$, then we have $$\QCoh(\pt) = \Mod_k.$$ \noindent Our beloved global sections thus follow from a pushforward.

\begin{align*} 
p_*: \QCoh(X) &\to \QCoh(pt)\\
\mc{F} &\mapsto \Gamma(X, \mc{F}) = p_*(\mc{F}) 
\end{align*}
\end{example}

\begin{defn}
We define the functor $$\OO: \rm{Stk}^{op} \to \mr{CRing}$$ as the right Kan extension of $\OO(\Spec A) = A$, such that it preserves limits. 
\end{defn} 

\begin{remark} This is the decategorification of $\QCoh$. \end{remark}

\begin{example} Given $X \xrightarrow{p} \pt$, $\OO_X := p^*\OO_{\pt}.$ Also, $p_*\OO_X := \OO(X) := \Gamma(X, \OO_X).$
\end{example}

\begin{defn} We define the stacky quotient of a stack $X$ by a group $G$ as a colimit in the category of Stacks. Below, the two rightward arows are the action map $(g, x) \mapsto g\cdot x$ and the projection $(g, x) \mapsto x.$ 
\[
% https://q.uiver.app/#q=WzAsNSxbNCwwLCJYIFxcYmlnKSJdLFszLDAsIkcgXFx0aW1lcyBYIl0sWzIsMCwiRyBcXHRpbWVzIEcgXFx0aW1lcyBYIl0sWzEsMCwiXFxjb2xpbVxcQmlnKFxcY2RvdHMiXSxbMCwwXSxbMSwwLCJcXHRleHR7YWN0fSIsMCx7Im9mZnNldCI6LTJ9XSxbMSwwLCJcXHRleHR7dW5pdH0iLDIseyJvZmZzZXQiOjF9XSxbMiwxXSxbMiwxLCIiLDIseyJvZmZzZXQiOi0zfV0sWzIsMSwiIiwyLHsib2Zmc2V0IjozfV0sWzMsMiwiIiwyLHsib2Zmc2V0IjoxfV0sWzMsMiwiIiwyLHsib2Zmc2V0IjotMX1dLFszLDIsIiIsMix7Im9mZnNldCI6M31dLFszLDIsIiIsMix7Im9mZnNldCI6LTN9XV0=
\begin{tikzcd}
	X/G := & {\colim\Big(\cdots} & {G \times G \times X} & {G \times X} & {X \big)}
	\arrow[shift right, from=1-2, to=1-3]
	\arrow[shift left, from=1-2, to=1-3]
	\arrow[shift right=3, from=1-2, to=1-3]
	\arrow[shift left=3, from=1-2, to=1-3]
	\arrow[from=1-3, to=1-4]
	\arrow[shift left=3, from=1-3, to=1-4]
	\arrow[shift right=3, from=1-3, to=1-4]
	\arrow["{\text{act}}", shift left=2, from=1-4, to=1-5]
	\arrow["{\text{proj}}"', shift right, from=1-4, to=1-5]
\end{tikzcd}
\]
\noindent There is also a section $x \mapsto (e, x)$, we will work with the quotient stack as an action groupoid, and this section is our unit map.
\end{defn}

\begin{remark} Notational choice: We use one slash to mean a stacky quotient, as is standard in algebraic geometry, which is discussed in section \ref{stacks} this is equivalent to the double slash which is standard in homotopy theory. \end{remark}

\begin{remark}  Note that $\pi_0((X/G)(K))$ is exactly the set of orbits of $G(K)$ acting on $X(K).$ \end{remark}

\begin{defn} A functor $\mc{F}: \mc{C} \to \mathrm{An}$ is representable by an object $R \in \mc{C}$ if for every object $C \in \mc{C}$ we have a natural isomorphism $$\mc{F}(C) \simeq \Hom_\mc{C}(R, C).$$ \end{defn}

\begin{lemma} Representable functors $\mc{F}, \mc{G}$ are equivalent if there is a natural  transformation between them $n: \mc{F} \to \mc{G}$ which induces an isomorphism on their representing objects. \end{lemma}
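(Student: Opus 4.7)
The plan is to reduce this to the Yoneda lemma. First I would fix chosen representations $\mc{F} \simeq \Hom_{\mc{C}}(R, -)$ and $\mc{G} \simeq \Hom_{\mc{C}}(S, -)$ and transport $n$ across these equivalences to obtain a natural transformation $\tilde n : \Hom_{\mc{C}}(R, -) \to \Hom_{\mc{C}}(S, -)$. By the Yoneda lemma applied at the object $R$, such a natural transformation is classified by a single element of $\Hom_{\mc{C}}(S, R)$, namely $\phi := \tilde n_R(\id_R)$. The phrase ``induces an isomorphism on representing objects'' is then unambiguous: it means $\phi$ is an isomorphism in $\mc{C}$.

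Next I would use naturality to write down $\tilde n$ explicitly. For any $C \in \mc{C}$ and any $f : R \to C$, naturality of $\tilde n$ in the morphism $f : R \to C$ gives
\[
\tilde n_C(f) = \tilde n_C(f_\ast \id_R) = f_\ast \tilde n_R(\id_R) = f \circ \phi.
\]
Thus $\tilde n_C$ is precomposition with $\phi$. Now, assuming $\phi$ is an isomorphism with inverse $\psi : R \to S$, I would define a candidate inverse natural transformation $\tilde m : \Hom_{\mc{C}}(S, -) \to \Hom_{\mc{C}}(R, -)$ by $\tilde m_C(g) = g \circ \psi$, and check directly that $\tilde m_C \circ \tilde n_C$ and $\tilde n_C \circ \tilde m_C$ are the identity on hom-sets, which follows from $\psi \circ \phi = \id_S$ and $\phi \circ \psi = \id_R$ together with associativity of composition. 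This exhibits $\tilde n$, and hence $n$, as a natural isomorphism.

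The only genuine content beyond Yoneda is the verification that the construction of $\phi$ is well-defined up to the chosen representations, i.e. that a different choice of isomorphisms $\mc{F} \simeq \Hom_{\mc{C}}(R, -)$ and $\mc{G} \simeq \Hom_{\mc{C}}(S, -)$ produces a morphism $\phi' : S \to R$ that differs from $\phi$ by pre- and post-composition with automorphisms of $R$ and $S$, hence is an isomorphism iff $\phi$ is. I do not expect a real obstacle here; the entire statement is a repackaging of the fact that the Yoneda embedding $\mc{C} \hookrightarrow \Fun(\mc{C}^{op}, \mathrm{An})$ is fully faithful and therefore conservative on isomorphisms. The one point to be careful about in the higher-categorical setting (since $\mc{F}, \mc{G}$ are $\mathrm{An}$-valued) is that ``isomorphism'' must be interpreted as equivalence and naturality should be read in the $\infty$-categorical sense, but the argument via evaluation at $\id_R$ still goes through verbatim.
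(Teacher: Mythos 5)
Your proof is correct and is exactly the paper's argument: the paper's entire proof reads ``This follows from Yoneda,'' and your proposal simply unwinds that one line, identifying $n$ with precomposition by a morphism $\phi$ between representing objects and inverting it when $\phi$ is an isomorphism. No discrepancy to report.
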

\begin{proof} This follows from Yoneda. \end{proof}

\subsection{Cohomology of Stacks gives Continuous Cohomology}

\label{continuouscohom}

We will here give a site-theretic discription of continous group cohomology so that we may naturally pass from equivalence of stacks to equivalence of their associated group cohomologies. %Coherent stack cohomology will descend to continuous group cohomology in our setting.

We will start with a pro-finite friendly version of paradigm that a category of representations can be realized as a category of sheaves on a classifying stack, that is $$\mr{Rep}(G) \simeq \QCoh(\pt/G).$$

\begin{defn} Let $G$ be a profinite group. We consider the $\mr{Rep}^{\mr{sm}}_R(G)$ to be the category of $R$-modules which are smooth $G$-representations. An $R$-module $V$ is called smooth if the stabilizer of every vector $v \in V$ is open in $G$.
\end{defn}

\begin{defn} \label{firsttry} If $R$ is a commutative ring, then right Kan extension along the inclusion $\{\pt\} \hookrightarrow \mr{Fin}$ produces a functor $\mr{Fin}^{op} \to \mr{CRing}^{top}$ sending $S \mapsto R(S) := \prod_{x \in S} R.$ Using the universal property of an ind-category, we extend this to a functor:
\begin{align*} 
R(-): \mr{ProFin}^{op} &\to \mr{Set} \\
S = (S_i)_i &\mapsto R(S) := R(S_i) \simeq \Hom_{cts}(S, R).
\end{align*} 
By composing the functor $S \mapsto R(S)$ with $\mr{Mod}_{(-)}$, we define the sheaf 
\begin{align*}
D(-, R): \mr{ProFin}^{op} &\to \mr{Cat} \\
S &\mapsto D(S, R) := \Mod_{R(S)}
\end{align*} 
In fact, for every profinite set $S$, there's a natural equivalence (in S) where the right hand side $$D(S, R) \simeq \Shv(S, \Mod_R)$$
denotes the category of $\Mod_R$-valued sheaves on the site of open subsets of $S$.

%$*=\Spec R$ and the stacky quotient $*/G$ is taken in the pro-\'etale site, but the sheaves themselves $\mr{Shv}(-)$ are considered in the \'etale site. 
\end{defn}

\begin{remark} This is the profinite version of $\mr{QCoh}(-)$. \end{remark}

\begin{lemma} (A.4.23) \label{a4} \cite{6fun} Let $\mc{C}$ be a site with associated hypercomplete topos $\mc{X} := \mr{HypSh}(\mc{C})$ and let $\mc{V}$ be a category that has small limits. Then precomposition with the functor $\mc{C} \to \mc{X}$ induces an equivalence of categories. 
$\mr{Shv}(\mc{X}, \mc{V}) \simeq \mr{HypSh}(\mc{C}, \mc{V}).$
\end{lemma}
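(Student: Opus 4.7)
The plan is to exhibit mutually inverse functors between the two sides using the universal property of the hypercomplete topos. Denote by $i \colon \mc{C} \to \mc{X} = \mr{HypSh}(\mc{C})$ the composite of Yoneda with hypersheafification, and consider the restriction functor $i^* \colon \mr{Shv}(\mc{X}, \mc{V}) \to \mr{PSh}(\mc{C}, \mc{V})$ given by precomposition with $i$.

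The first step is to reinterpret both sides intrinsically. On the left, since $\mc{V}$ admits small limits, $\mc{V}$-valued sheaves on the topos $\mc{X}$ are precisely those functors $\mc{X}^{op} \to \mc{V}$ that send small colimits in $\mc{X}$ to limits in $\mc{V}$. On the right, hypersheaves with values in $\mc{V}$ are presheaves sending hypercovers in $\mc{C}$ to limit diagrams. The key geometric input is then that $i$ enjoys two properties: every object $U \in \mc{X}$ is a canonical colimit of representables $U \simeq \colim_{i(c) \to U} i(c)$, and $i$ carries hypercovers in $\mc{C}$ to colimit diagrams in $\mc{X}$. The latter is essentially built into the definition of $\mr{HypSh}(\mc{C})$, and together they imply that $i^*$ sends limit-preserving functors on $\mc{X}^{op}$ to hypersheaves on $\mc{C}$.

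For the inverse, I would produce $i_* F \colon \mc{X}^{op} \to \mc{V}$ as a right Kan extension of any $F \in \mr{HypSh}(\mc{C}, \mc{V})$ along $i^{op}$, explicitly $i_* F(U) := \lim_{i(c) \to U} F(c)$. The hypersheaf condition on $F$ guarantees that this Kan extension is well-defined on the topos---different colimit presentations of $U$ differ by hypercovers, which $F$ sends to limits---and that the result sends colimits in $\mc{X}$ to limits in $\mc{V}$, hence defines a $\mc{V}$-valued sheaf. The unit $F \to i^* i_* F$ is then an equivalence by full faithfulness of $i$ on representables, and the counit $i_* i^* G \to G$ is an equivalence because any $G \in \mr{Shv}(\mc{X}, \mc{V})$, being limit-preserving, is determined by its values on the generating family $i(\mc{C})$.

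The main obstacle is ensuring the inverse construction is well-posed, i.e.\ that Kan extending a hypersheaf along $i^{op}$ genuinely produces a limit-preserving functor on the whole topos rather than just a presheaf. This reduces to showing that the hypercover-induced colimit presentations of objects in $\mc{X}$ are preserved under the Kan extension formula---a compatibility which can fail in the non-hypercomplete setting and is exactly why the lemma requires $\mc{X} = \mr{HypSh}(\mc{C})$ rather than merely $\mr{Shv}(\mc{C})$. In practice I would defer to the accessibility/presentability machinery in HTT to handle the set-theoretic bookkeeping of the colimit presentations, citing \cite{6fun} for the precise packaging needed in the $\mc{V}$-valued setting.
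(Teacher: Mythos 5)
The paper does not prove this statement; it is quoted verbatim from \cite{6fun} (A.4.23) as an external input, so there is no in-paper argument to compare yours against. Judged on its own terms, your proposal follows the standard strategy (restriction along $\mc{C}\to\mc{X}$ versus right Kan extension, after identifying $\mr{Shv}(\mc{X},\mc{V})$ with limit-preserving functors $\mc{X}^{op}\to\mc{V}$), and the two geometric inputs you isolate --- generation of $\mc{X}$ under colimits by the image of $\mc{C}$, and hypercovers becoming colimit diagrams --- are indeed the right ones.

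There is, however, one genuine gap: your unit argument. You claim $F\to i^*i_*F$ is an equivalence ``by full faithfulness of $i$ on representables,'' where $i\colon\mc{C}\to\mr{HypSh}(\mc{C})$ is Yoneda followed by hypersheafification. For a general site this composite is \emph{not} fully faithful --- the topology need not be subcanonical, and even when it is, representable sheaves need not be hypercomplete --- so the comma category indexing $\lim_{i(c')\to i(c)}F(c')$ need not have $(c,\mr{id})$ as a terminal object, and the limit need not collapse to $F(c)$. The standard repair is to insert the presheaf category as an intermediary: restriction along the honest Yoneda embedding $y\colon\mc{C}\to\mr{PSh}(\mc{C})$ gives $\mr{Fun}^{\lim}(\mr{PSh}(\mc{C})^{op},\mc{V})\simeq\mr{Fun}(\mc{C}^{op},\mc{V})$ (here full faithfulness is free, with inverse the right Kan extension), and then one identifies $\mr{Fun}^{\lim}(\mc{X}^{op},\mc{V})$ with the full subcategory of functors inverting the hypercover local equivalences, which translates exactly into the hypersheaf condition on the corresponding presheaf. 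This reorganization also absorbs the ``main obstacle'' you flag at the end, since the only thing left to check is that the right Kan extension of a presheaf sends the colimit of a hypercover to the limit over that hypercover. Your counit argument and the identification of both sides are otherwise fine.
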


\begin{defn} We define \begin{align*} 
D(-, R) \colon \mr{Cond(Ani)}^{op} &\to \mr{Cat} \\
X & \mapsto D(X, R)
\end{align*} as the hypercomplete sheaf of categories associated to the sheaf in definition \ref{firsttry} by lemma \ref{a4}. 
\end{defn}

%\rin{if we do this proof right, then 1.19 and 1.20 should fall right out}

\begin{lemma} \cite{6fun} (5.1.12) 
\label{bitchitsequivalent} Let $G$ be a profinite group and $R$ be a commutative ring. There is an equivalence of categories between $$\mr{Rep}^{\mr{sm}}_R(G) \simeq \mr{Shv}(\pt/G, \Mod_R),$$ which is natural with respect to continuous group homomorphisms.
%https://arxiv.org/pdf/2410.13038
\end{lemma}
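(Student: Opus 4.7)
The plan is to combine descent along the atlas $\pt \to \pt/G$ with the description $R(G) \simeq \mr{Hom}_{cts}(G, R)$ from Definition \ref{firsttry}, and then identify descent data with smooth $G$-representations. First I would invoke Lemma \ref{a4}: since $D(-, R)$ is a hypercomplete sheaf on $\mr{Cond(Ani)}$ and the cover $\pt \to \pt/G$ has \v{C}ech nerve given by the bar construction $[n] \mapsto G^n$, descent yields
\[
\Shv(\pt/G, \Mod_R) \simeq \lim \Big( \Mod_R \rightrightarrows \Mod_{R(G)} \Rrightarrow \Mod_{R(G \times G)} \cdots \Big).
\]
Unpacking, an object of this limit is an $R$-module $M$ equipped with an $R(G)$-linear descent isomorphism $\alpha \colon d_0^* M \xrightarrow{\sim} d_1^* M$ satisfying a cocycle on $R(G \times G)$. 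Both pullbacks identify canonically with $R(G) \otimes_R M$, so $\alpha$ is encoded by an $R$-linear coaction $c \colon M \to R(G) \otimes_R M$.

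Next I would convert this coaction into a smooth action. Using $R(G) \simeq \mr{Hom}_{cts}(G, R)$, adjunction turns $c$ into a set-theoretic map $G \times M \to M$; the cocycle condition becomes associativity and the counit condition becomes $e \cdot m = m$. The hard part is matching the tensor factor $R(G)$ with the openness-of-stabilizer condition that defines $\mr{Rep}^{\mr{sm}}_R(G)$. The key input, which I would establish separately, is that $R(G) = \colim_U R(G/U)$ with $U$ ranging over open normal subgroups of $G$, since continuous maps from a compact profinite group to a discrete ring factor through finite quotients. Consequently, for each $m \in M$ the element $c(m)$ lies in $R(G/U) \otimes_R M$ for some $U$, forcing $U$ to stabilize $m$; that is exactly smoothness. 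Conversely any smooth representation defines a coaction by $m \mapsto \sum_{gU \in G/U} \chi_{gU} \otimes g \cdot m$ for any $U$ that fixes $m$, and a direct check shows the two assignments are mutually inverse.

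Finally, naturality in a continuous homomorphism $\varphi \colon G \to H$ is routine: the induced $B\varphi \colon \pt/G \to \pt/H$ has \v{C}ech nerve given levelwise by $\varphi^n \colon G^n \to H^n$, so pullback of sheaves becomes restriction of scalars along $R(H^n) \to R(G^n)$, which under the identification above is precisely restriction of representations along $\varphi$.
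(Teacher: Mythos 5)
The paper offers no argument for this lemma at all: it is imported wholesale from \cite{6fun} (their 5.1.12), so there is nothing in-house to compare against. Your descent proof is a correct, essentially self-contained reconstruction of how such a statement is proved: the atlas $\pt \to \pt/G$ is an effective epimorphism in $\mr{Cond(Ani)}$ with \v{C}ech nerve the bar construction $[n]\mapsto G^n$, descent for the sheaf of categories $D(-,R)$ identifies the right-hand side with comodules over $R(G)\simeq \Hom_{cts}(G,R)$, and the filtered decomposition $R(G)\simeq \colim_U R(G/U)$ over open normal subgroups is exactly the mechanism that turns the comodule condition into openness of stabilizers (your check that $u\cdot m = e\cdot m = m$ via counitality is the right one). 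Three points worth tightening. First, the lemma is stated for $\Shv(\pt/G,\Mod_R)$ while descent is a property of $D(-,R)$; you should say explicitly that these agree, which is the content of Definition~\ref{firsttry} together with Lemma~\ref{a4}, before running the \v{C}ech argument. Second, in the naturality step the pullback along $B\varphi$ is levelwise \emph{extension} of scalars $-\otimes_{R(H^n)}R(G^n)$, not restriction of scalars; it is the induced functor on representations that is restriction along $\varphi$, so your conclusion stands but the wording should be fixed. Third, if $\Mod_R$ is read derivedly, the totalization over $\Delta$ requires higher coherence data beyond the single cocycle on $R(G\times G)$; since the statement concerns the ordinary category of smooth representations, the $2$-truncated descent datum you describe suffices, but a sentence acknowledging this would make the argument airtight.
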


\begin{remark} This is the condensed version of $\mr{QCoh}(-)$. \end{remark}
%\begin{proof} $D(G^n, R) \simeq D(\Mod_{R_c}(G^n))$ The objects in $D(*/G, R)^\heart$ are pairs $(V, \alpha)$ consisting over a $R$-module $V$ and a $R_c(G)$-linear isomorphism such that $\pi_1^* \alpha \circ \pi_2^*\alpha = m^*\alpha.$  \on{finish proof here, I think it will give the desired corrollary on group cohom} \end{proof} 

The pullback along the projection $\pt/G \to \pt$ sends an $R$-module $M$ to the trivial $G$-representation on it, while the pushforward along this map computes $G$-cohomology of $R$.

%\on{Want to combine 1.19 and 1.20 into 1.21, falling directly out of Lucas's derived set up. Almost there I think, copying this hom thing-- is it this}

\begin{cor} Given the equivalence of categories above, we consider $M \in \mr{Rep}^{\mr{sm}}_R(G)$ and its corresponding $\mc{F}_M \in \mr{Shv}(\pt/G, \Mod_R)$. Consider the map $q: \pt/G \to \pt$, then $q_*: \mr{Shv}(\pt/G, \Mod_R) \to \mr{Shv}(\pt, \Mod_R)$, gives us an equivalence of cohomologies between $M \in $
$$R\Gamma_{cts}(G, M) \simeq q_*\mc{F}_M := R\Gamma(\pt/G, \mc{F}_M).$$
\end{cor}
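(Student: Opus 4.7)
The plan is to realize both sides of the equivalence as derived functors of the same underlying functor and then identify their derived categories via the natural equivalence of Lemma \ref{bitchitsequivalent}.

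First I would analyze the pullback--pushforward adjunction along $q \colon \pt/G \to \pt$ at the level of underived categories. Under the equivalence $\mr{Shv}(\pt/G, \Mod_R) \simeq \mr{Rep}^{\mr{sm}}_R(G)$, the map $q$ is the one classifying the trivial $G$-torsor, so $q^*$ sends an $R$-module $N$ to $N$ equipped with the trivial (hence automatically smooth) $G$-action. Its right adjoint is then forced to be the smooth $G$-invariants functor $(-)^G \colon \mr{Rep}^{\mr{sm}}_R(G) \to \Mod_R$, since for any $R$-module $N$ and smooth representation $M$ one has $\Hom_{\Rep^{\mr{sm}}_R(G)}(q^*N, M) = \Hom_R(N, M^G)$. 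In particular $q_* \mc{F}_M = M^G$.

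Next I would compute $Rq_*\mc{F}_M$ via Čech descent along the canonical cover $\pt \to \pt/G$, which is a $G$-torsor (hence pro-étale and surjective). The $(n+1)$-fold fiber product is naturally $G^n$ as a profinite set, viewed as an object of $\mr{Cond(Ani)}$. By the description of $D(-,R)$ in Definition \ref{firsttry}, the value of $\mc{F}_M$ on $G^n$ is $\Hom_{cts}(G^n, M)$, i.e.\ the module of continuous $n$-cochains on $G$ with values in $M$. A direct check shows that the Čech differentials, induced by the groupoid structure of $\pt/G$ (the two face maps are the action and projection, with degeneracies coming from the unit), recover the standard bar differentials of the continuous cochain complex $C^{\bullet}_{cts}(G, M)$.

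Finally I would conclude by hypercompleteness: the Čech spectral sequence for the cover $\pt \to \pt/G$ converges to $R\Gamma(\pt/G, \mc{F}_M)$ precisely because we are working inside the hypercomplete topos of Lemma \ref{a4}, and the totalization of the cochain complex above is by definition $R\Gamma_{cts}(G, M)$. The main technical obstacle is the justification of this last convergence and the identification of the Čech totalization with the standard continuous cochain complex: one must verify that $\mc{F}_M$, as a hypersheaf, takes the profinite set $G^n$ to $\Hom_{cts}(G^n, M)$ in a way compatible with the simplicial structure. This reduces to unwinding the definition of the equivalence in Lemma \ref{bitchitsequivalent} on the level of sections over profinite probes, together with the naturality statement in that lemma applied to the continuous projections $G^{n+1} \to G^n$.
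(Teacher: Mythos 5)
Your proposal is correct and follows essentially the same route as the paper: the paper's proof also descends along the cover $\pt \to \pt/G$ (phrased as the Cartan--Leray spectral sequence), identifies the term over $G^n$ with $\Hom_{cts}(G^n, M)$, and matches the differentials with the bar differentials of the continuous cochain complex. You are in fact more careful than the paper, which silently assumes the convergence and the compatibility with the simplicial structure that you correctly flag as the technical content requiring hypercompleteness and the naturality in Lemma \ref{bitchitsequivalent}.
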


\begin{proof} 
$$R\Gamma(G^n, \mc{F}_M) \simeq \Hom_{cts, G}(G^n, M) \simeq \Hom_{cts}(G^{n-1}, M).$$
Given $\pt$ the one point set with trivial $G$-action, the left hand side is a term of the complex that computes $H^i(\pt/G, \mc{F}_M)$ via the Cartan-Leray spectral sequence, and the right hand side is a term of the complex computing $H^i_{cts}(G, M)$. The differentials can be identified as well. 
\end{proof}

\begin{lemma} \label{howitbe} (site for sore eyes) Given a sheaf $\mc{F} \in \QCoh(X/G)$, then
$$H^*(X/G, \mc{F})\simeq H^*_{cts}(G, \mc{F}(X))$$
\end{lemma}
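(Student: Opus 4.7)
The plan is to factor the structure map of $X/G$ through the classifying stack $\pt/G$. Writing $\pi \colon X/G \to \pt/G$ for the map induced by $X \to \pt$ and $q \colon \pt/G \to \pt$ for the projection, composition of pushforwards gives
\[ R\Gamma(X/G, \mc{F}) \simeq R\Gamma(\pt/G, \pi_* \mc{F}). \]
The preceding corollary then identifies the right-hand side with $R\Gamma_{cts}(G, M)$, where $M$ is the smooth $G$-representation corresponding to $\pi_* \mc{F}$ under the equivalence $\mr{Shv}(\pt/G, \Mod_R) \simeq \mr{Rep}^{\mr{sm}}_R(G)$ of Lemma \ref{bitchitsequivalent}. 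Thus the task reduces to showing that $M \simeq \mc{F}(X)$ as smooth $G$-representations.

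To identify $\pi_* \mc{F}$ I would use the simplicial presentation
\[ X/G \simeq \colim\bigl(\cdots \rightrightarrows G \times G \times X \rightrightarrows G \times X \rightrightarrows X\bigr) \]
together with the definition of $\QCoh$ as a right Kan extension from affines. Assuming $X$ is affine, so that each $G^n \times X$ is a cofiltered limit of affines with vanishing higher coherent cohomology, the cohomology $R\Gamma(X/G, \mc{F})$ is computed by the totalization of the cosimplicial object
\[ \mc{F}(X) \rightrightarrows \mc{F}(G \times X) \rightrightarrows \mc{F}(G \times G \times X) \rightrightarrows \cdots. \]
By Definition \ref{firsttry} the sections of a sheaf over a profinite set $S$ are $\Hom_{cts}(S, -)$, so $\mc{F}(G^n \times X) \simeq \Hom_{cts}(G^n, \mc{F}(X))$, and the cosimplicial object above is exactly the inhomogeneous bar complex computing $H^*_{cts}(G, \mc{F}(X))$.

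The main obstacle will be verifying that the $G$-action on the $\OO(X)$-module $\mc{F}(X)$ coming from the descent data is genuinely continuous and smooth in the sense of Definition \ref{firsttry}, so that the bar-complex identification is equivariant and not merely an equality of underlying abelian groups. This is essentially a check that the condensed structure on $\mc{F}(X)$ produced from the profinite site of $\mr{ProFin}$ matches the one induced by the torsor structure of $X \to X/G$. Once this is in hand, one can either apply the preceding corollary as above, or argue directly by comparing the Čech spectral sequence for the cover $X \to X/G$ (which degenerates in the affine case) with the bar spectral sequence, obtaining the claimed isomorphism $H^*(X/G, \mc{F}) \simeq H^*_{cts}(G, \mc{F}(X))$ term by term.
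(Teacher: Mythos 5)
Your proof is correct and follows essentially the same route as the paper's: factor $X/G \to \pt/G \to \pt$, push forward along the first map, and invoke the preceding corollary identifying cohomology on $\pt/G$ with continuous group cohomology. The paper does this purely by adjunction-juggling ($\Hom(\yo(X/G),\mc{F}) \simeq \Hom(\yo(\pt/G), p_*\mc{F}) \simeq \Hom(\yo(\pt), q_*p_*\mc{F})$) and simply declares $q_*p_*\mc{F}$ to be $\mc{F}(X)$, whereas you spell out that identification via the simplicial presentation and the bar complex --- a point the paper leaves implicit, so your extra care there is warranted rather than redundant.
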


\begin{proof}

We consider the following collection of sites and sheaves on them: 

\[\begin{tikzcd}
	{\mc{F}} &&& {\mc{F}(X)} \\
	& {X/G} & {\pt/G} \\
	& X & {\pt} \\
	{\mc{F}} &&& {\mc{F}(X)}
	\arrow["G"{description}, from=1-1, to=1-1, loop, in=100, out=170, distance=10mm]
	\arrow[maps to, from=1-1, to=1-4]
	\arrow[maps to, from=1-1, to=4-1]
	\arrow["G"{description}, from=1-4, to=1-4, loop, in=10, out=80, distance=10mm]
	\arrow[maps to, from=1-4, to=4-4]
	\arrow["p"', from=2-2, to=2-3]
	\arrow[from=2-2, to=3-2]
	\arrow["q"', from=2-3, to=3-3]
	\arrow[from=3-2, to=3-3]
	\arrow[maps to, from=4-1, to=4-4]
\end{tikzcd}\]

\noindent We start by unraveling the left side, whose derived global sections give $H^*(X/G, \mc{F})$,
\begin{align*}
    \Hom(\yo(X/G),\mc{F}) 
    & \simeq  \Hom(p^*\yo(\pt/G),\mc{F}) \\
    & \simeq \Hom(\yo (\pt/G), p_*\mc{F}) \\
\end{align*}

\noindent Next we will unravel the right hand side, whose derived global sections give $H^*_{cts}(G, \mc{F}(X))$
\begin{align*}
    \Hom(\yo (\pt), \mc{F}(X)) & \simeq \Hom(\yo (\pt),q_*p_*\mc{F}) \\
    & \simeq \Hom(q^*(\yo (\pt)), p_*\mc{F}) \\
    & \simeq \Hom(\yo(\pt/G), p_*\mc{F}) \\
\end{align*}

\noindent Finally, putting it together, 

$$\Hom(\yo(X/G),\mc{F}) \simeq \Hom(\yo (\pt/G), p_*\mc{F}) \simeq  \Hom(\yo (\pt), \mc{F}(X))$$ and the desired conclusion
$$H^*(X/G, \mc{F}) := R\Gamma(\yo(X/G),\mc{F}) \simeq  R\Gamma(\yo (\pt), \mc{F}(X)) =: H^*_{cts}(G, \mc{F}(X))$$
immediately  follows.
\end{proof}

\begin{cor} \label{continuousboogie} \color{blue}{(continuous boogie) If $G$ is a constant pro-finite group scheme, then we have an isomorphism
$$R\Gamma(\Def_X/G, \OO_{\Def_X/G}) \simeq R\Gamma_{cts}(G, \OO(\Def_X^\star)),$$ In other words, we have an isomorphism $$H^i(\Def_X/G, \OO_{\Def_X/G}) \simeq H^i_{\mr{cts}}(G, \OO(\Def_X^\star)).$$} \end{cor}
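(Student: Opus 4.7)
The plan is to deduce this as a direct specialization of Lemma \ref{howitbe} (site for sore eyes) with $\mc{F} = \OO_{\Def_X/G}$ on the stack $\Def_X/G$; once the relevant identifications are made, the corollary collapses to one line. So really the work is verifying that (i) $\OO_{\Def_X/G}$ is quasi-coherent and (ii) its value on the atlas $\Def_X$ is the ring $\OO(\Def_X^\star)$ appearing on the right-hand side.

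First I would note that $\OO_{\Def_X/G}$ lies in $\QCoh(\Def_X/G)$: the definition of $\OO$ as the right Kan extension of $\Spec A \mapsto A$ along $\Aff \hookrightarrow \Stk$ is precisely the decategorification of the corresponding Kan extension defining $\QCoh$, so $\OO_{\Def_X/G}$ is obtained from the tautological family $\{A\}_{\Spec A \to \Def_X/G}$ that assembles into a quasi-coherent sheaf. Next I would identify the global sections on the smooth cover: pulling $\OO_{\Def_X/G}$ back along $\Def_X \to \Def_X/G$ produces $\OO_{\Def_X}$, and hence
$$\OO_{\Def_X/G}(\Def_X) \;\simeq\; \OO(\Def_X) \;=:\; \OO(\Def_X^\star),$$
where $\Def_X^\star$ denotes the formal spectrum representing the deformation functor. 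With these two facts, Lemma \ref{howitbe} directly gives
$$R\Gamma(\Def_X/G, \OO_{\Def_X/G}) \;\simeq\; R\Gamma_{cts}\bigl(G,\, \OO(\Def_X^\star)\bigr),$$
and passage to cohomology in each degree yields the stated $H^i$-isomorphism.

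The role of the hypothesis that $G$ be a \emph{constant} profinite group scheme is to ensure that the right-hand side of Lemma \ref{howitbe}, which was defined via $q_*\mc{F}$ on $\pt/G$ and the equivalence of Lemma \ref{bitchitsequivalent}, genuinely computes continuous cohomology of the profinite group $G$ acting on the topological ring $\OO(\Def_X^\star)$, rather than cohomology of some more exotic group object (e.g.\ a formal or diamond-theoretic $G$). Under this hypothesis, the action of $G$ on $\OO(\Def_X^\star)$ pulled back from the groupoid $\Def_X \rightrightarrows \Def_X/G$ is exactly the continuous profinite $G$-action, so the right-hand side of Lemma \ref{howitbe} agrees with $R\Gamma_{cts}(G, \OO(\Def_X^\star))$ on the nose. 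The only genuinely non-formal step is this bookkeeping — matching the sheaf-theoretic $G$-action with the continuous topological-group action — but it is forced by the construction of the equivalence in Lemma \ref{bitchitsequivalent} once constancy of $G$ is assumed.
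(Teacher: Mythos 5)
Your proposal is correct and takes essentially the same route as the paper: both deduce the corollary by plugging the structure sheaf into Lemma \ref{howitbe} and identifying its value on the atlas with $\OO(\Def_X^\star)$. Your added remarks on quasi-coherence of $\OO$ and on the role of the constancy hypothesis are more explicit than the paper's one-line proof, but they elaborate the same argument rather than replace it.
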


\begin{proof} Note that Lemma \ref{howitbe} implicitly identifies a sheaf $\OO_{\Def_X^\star/G}$ with a sheaf $\mc{F}:= \OO_{\Def_X^\star}$ with an action of $G$, plugging this sheaf into Lemma \ref{howitbe} our desired statement pops out $$R\Gamma(\Def_X^\star/G, \OO_{\Def_X^\star}) \simeq R\Gamma(\pt, (p\circ q)_*\mc{F}) \simeq R\Gamma_{cts}(G, \mc{O}(\Def_X^\star)). \qedhere$$

%Then, $f_*\mc{F} := f_*\OO_{\Def_X^\star/G},$ 

%If we didn't care about continuous cohomology, we'd use the \'etale site when we define our stacky quotient, but we do care so we use the proetale site when we define our stacky quotient.

\end{proof}

\begin{remark} $H^i(G, \OO(\Def_X^\star))$ is group hypercohomology, since we are considering $\OO$ derivedly. \end{remark}

\newpage
\section{Ode to Deformations}

Let $k$ be a char $p$ field. Let $\widehat{\Art}_k$ be the category of complete local algebras with a finitely generated maximal ideal and specified map to the residue field $k$. In the rest of this document, we denote the base change of an object $\mf{X} \times_{\Spec R} \Spec k$ as $\mf{X}|_k$.

We define a deformation moduli problem where we allow morphisms to reduce to a subset of automorphisms of $X$. 

\begin{defn} \label{deform} Let $\mc{F}$ be a functor $\mc{F}: \widehat{\Art}_k \to \mathrm{Grpd}$ and $X \in \mc{F}(k)$, we consider the functor $\Def^G_X: \widehat{\Art}_k \to \mathrm{Grpd}$. Given $G \subseteq \Aut(X)$, the groupoid \color{blue}{$\Def_X^G(R)$} has
\begin{itemize} 
\item as objects tuples $$\{ \mf{X} \in \mc{F}(R), \iota: \mf{X}|_{k} \simeq X \},$$ 
\item as morphisms: maps $\phi: \mf{X} \to \mf{X}'$ such that there exists $g \in G$ for which the following diagram commutes:
\[
\begin{tikzcd}
\mf{X}|_k \arrow[d, "\iota"] \arrow[r, "\phi|_k"] & \mf{X}'|_k \arrow[d, "\iota'"] \\
X \arrow[r, "g", color={rgb,255:red,92;green,214;blue,214}]                                 & X                             
\end{tikzcd}
\] 
\end{itemize}\end{defn}

\begin{center}\includegraphics[width=4cm]{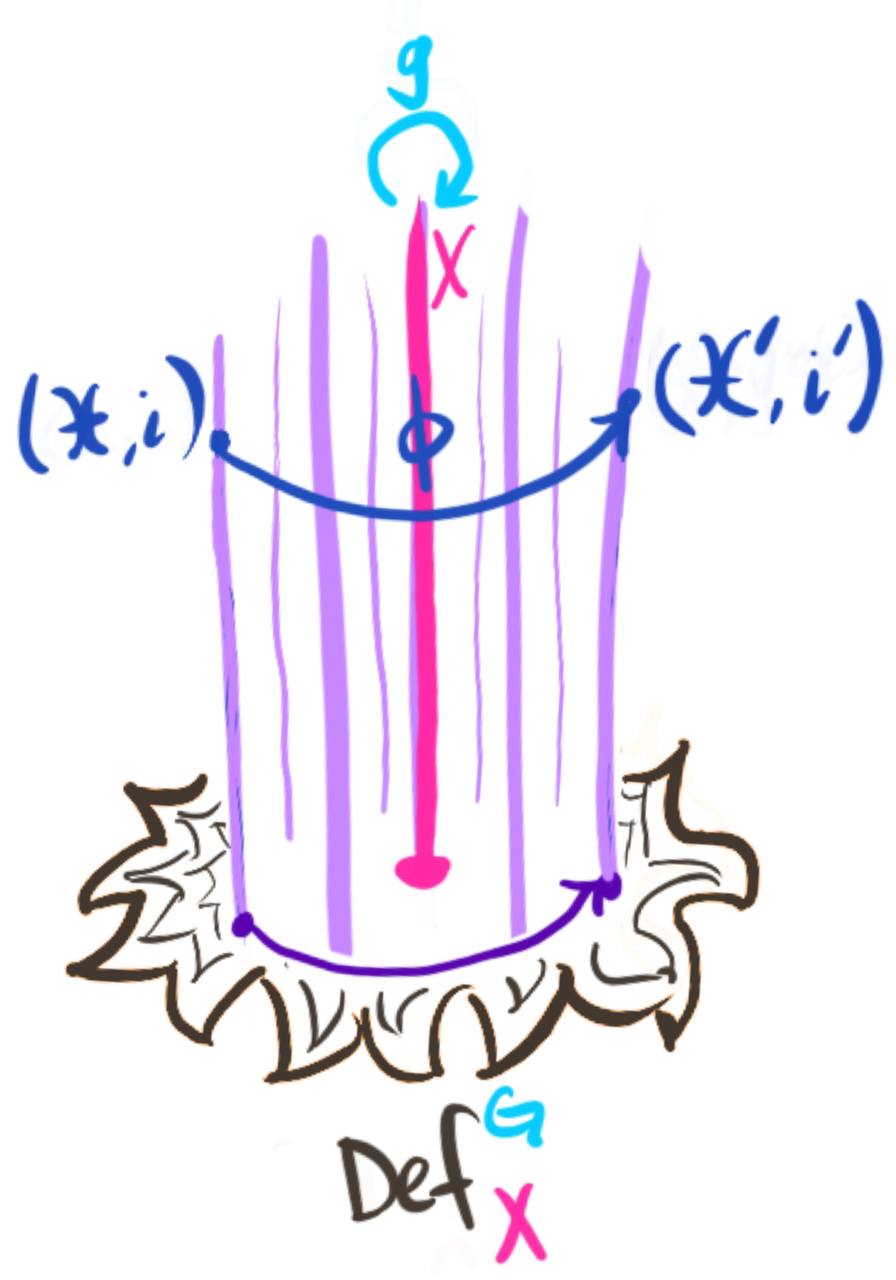}\end{center}

Historically, morphisms which reduce to the identity on the residue field are referred to as star-isomorphisms. As a notational convention, we will refer to the special case of $\Def_X^{\text{id}}$ as $\Def^\star_X$. 

\begin{defn} The group $G \subseteq \Aut(X)$ acts on on $\Def_{X}^{\star},$ as follows: 

\begin{itemize} 
\item on objects, it sends $(\mf{X}, \mf{X}|_k \xrightarrow{\iota} X)$ to the object $(\mf{X}, {\color{red}{\mf{X}|_k \xrightarrow{\iota} X \xrightarrow{g} X}})$,

\item on morphisms, it sends morphisms to themselves on $\mf{X} \xrightarrow{\phi} \mf{X}'$ such that the following diagram commutes:

\[\begin{tikzcd}
	{\mf{X}|_k} & {\mf{X}'|_k} \\
	X & X
	\arrow["{\phi|_k}", from=1-1, to=1-2]
	\arrow["{g \circ \iota}"', color=red, from=1-1, to=2-1]
	\arrow["{g \circ \iota'}", color=red, from=1-2, to=2-2]
	\arrow["{\text{id}}", from=2-1, to=2-2]
\end{tikzcd}\]
\end{itemize}
\end{defn}

\begin{defn} A $G$-torsor $M \to M^{\star}$ is a pullback 

\[\begin{tikzcd}
	M & \pt \\
	{M^\star} & BG
	\arrow[from=1-1, to=1-2]
	\arrow[from=1-1, to=2-1]
	\arrow["\lrcorner"{anchor=center, pos=0.125}, draw=none, from=1-1, to=2-2]
	\arrow[from=1-2, to=2-2]
	\arrow[from=2-1, to=2-2]
\end{tikzcd}\]
%isomorphism $(g \times \text{id}) \colon G \times M \simeq M \times_{M^\star} M$, (where $G$ acts on the first coordinate and is identity on the second coordinate.)  
\end{defn}

%\begin{example} Let's talk through the most basic case of this. Consider a category with one object $k$ and morphisms $G$, then maps from $k \to X$ have as a fiber $BG = */G$. This is a principal $G$-bundle, which can also be specified by considering a map $X \to BG$. This is a first example where derived orbits coincide with stacky quotients.   \end{example}

\begin{lemma} \label{skydive} \color{blue}{(skydive)} Fix $G \subseteq \Aut(X)$. Then
$$\Def^G_X \simeq (\Def_X^{\star})/G.$$  \end{lemma}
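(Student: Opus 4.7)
The plan is to exhibit, for each $R \in \widehat{\Art}_k$, a natural equivalence of groupoids $\Def^\star_X(R)/G \xrightarrow{\sim} \Def^G_X(R)$, and then observe that the construction is natural in $R$. Using the bar-complex description of the stacky quotient recalled above, $\Def^\star_X(R)/G$ has the same objects as $\Def^\star_X(R)$, namely pairs $(\mf{X}, \iota)$, with morphisms from $(\mf{X}, \iota)$ to $(\mf{X}', \iota')$ given by pairs $(g, \phi)$ where $g \in G$ and $\phi$ is a $\star$-morphism from $(\mf{X}, \iota)$ to the translated object $g \cdot (\mf{X}', \iota') = (\mf{X}', g \circ \iota')$. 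Unwinding the $\star$-condition yields the relation $\iota = g \circ \iota' \circ \phi|_k$ on the residue field.

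I would then define the comparison $F_R : \Def^\star_X(R)/G \to \Def^G_X(R)$ as the identity on objects, sending a morphism $(g,\phi)$ to the underlying morphism $\phi$, witnessed as a morphism in $\Def^G_X(R)$ by the element $g^{-1} \in G$ (so that the commutative square in Definition~\ref{deform} reads $g^{-1} \circ \iota = \iota' \circ \phi|_k$, which is a rearrangement of the $\star$-relation above). Essential surjectivity is automatic since $F_R$ is the identity on objects, and fullness is immediate: any $\phi$ in $\Def^G_X(R)$ comes equipped with a witness $g$ by definition, so the pair $(g^{-1}, \phi)$ maps to it.

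The main content is faithfulness, where care is needed because in $\Def^G_X$ the witness $g$ is a condition on $\phi$ rather than data, whereas in the quotient it is recorded explicitly. To show $F_R$ is injective on morphisms, I would use that $\mc{F}$ takes values in groupoids, so any morphism $\phi$ is invertible and in particular $\phi|_k$ is an isomorphism; combined with $\iota'$ being an isomorphism, two witnesses $g_1, g_2$ both satisfying $g_i \circ \iota' \circ \phi|_k = \iota$ must agree, so the pair $(g,\phi)$ is uniquely reconstructed from $\phi$.

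The anticipated obstacle is largely bookkeeping — keeping the direction of $\iota$ and the side on which $g$ acts consistent between the two descriptions so that the inversion $g \mapsto g^{-1}$ appears correctly in $F_R$ — rather than any substantive analytic or categorical content. Naturality in $R$ follows because the $G$-action, the $\star$-condition, and the witness condition are all preserved under base change along morphisms $R \to R'$ in $\widehat{\Art}_k$, so the equivalences $F_R$ assemble into an equivalence of functors $\widehat{\Art}_k \to \mathrm{Grpd}$, which is the desired conclusion.
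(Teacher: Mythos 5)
Your proof is correct, but it takes a genuinely different route from the paper's. You unwind the bar-construction presentation of $(\Def_X^\star)/G$ into the explicit action groupoid --- same objects, morphisms the pairs $(g,\phi)$ --- and match it morphism-by-morphism against Definition~\ref{deform}, the key point being that the witness $g$ is uniquely reconstructed from $\phi$ because $\iota'$ and $\phi|_k$ are isomorphisms; that is exactly why passing from the pair $(g,\phi)$ to the bare morphism $\phi$ loses no information, and your direction-of-$\iota$ bookkeeping (the appearance of $g^{-1}$) is consistent with the commuting square $\iota'\circ\phi|_k = g\circ\iota$ in the paper's definition. The paper instead argues formally: it exhibits $\Def_X^G$ as the pullback of $BG \to B\Aut_k(X)$ along the map $\Def_X^{\Aut_k(X)} \to B\Aut_k(X)$ recording $\phi|_k$, stacks the pullback squares for $\{e\} \subseteq G \subseteq \Aut_k(X)$, and concludes that $\Def_X^\star \to \Def_X^G$ is a pulled-back $G$-torsor, whence its base is the quotient. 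Your version makes explicit the pointwise content that the torsor argument packages abstractly and is more self-contained; the paper's version identifies all intermediate quotients $\Def_X^H$ for $e \subseteq H \subseteq \Aut_k(X)$ in one stroke and adapts directly to the profinite setting where one wants to remember the topology on $G$. One point you should flag: the stacky quotient in the statement is a colimit in stacks, so $\bigl((\Def_X^\star)/G\bigr)(R)$ is a priori the stackification of your action groupoid rather than the action groupoid itself; since you prove the action groupoid is already equivalent to $\Def_X^G(R)$, it suffices to add that $\Def_X^G$ satisfies descent, so the prestack quotient is already a stack and stackification changes nothing.
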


\begin{proof} 

Consider the map from 
\begin{align*} 
\Def_{X}^{\Aut_k(X)} &\to B\Aut_k(X) \\
(\mf{X} \xrightarrow{\phi} \mf{X}') & \mapsto  (X \xrightarrow{\phi|_k} X)
\end{align*}

The inclusion $G \hookrightarrow \Aut_k(X)$ induces a map on classifying stacks. The claim reduces to show that the pullback of these two maps is $\Def^G_X.$

\[\begin{tikzcd}
	{\Def_X^G} & BG \\
	{\Def_X^{\Aut_k(X)}} & {\mr{B}\Aut_k(X)}
	\arrow[from=1-1, to=1-2]
	\arrow[from=1-1, to=2-1]
	\arrow["\lrcorner"{anchor=center, pos=0.125}, draw=none, from=1-1, to=2-2]
	\arrow[from=1-2, to=2-2]
	\arrow[from=2-1, to=2-2]
\end{tikzcd}\]

Then, applying this to the groups $G$ and $\mr{id}$ respectively, implies that the following is one big pullback.

\[\begin{tikzcd}
	{\Def^\star_X} & {*} \\
	{\Def_X^G} & BG \\
	{\Def_X^{\Aut_k(X)}} & {\mr{B}\Aut_k(X)}
	\arrow[from=1-1, to=1-2]
	\arrow[from=1-1, to=2-1]
	\arrow["\lrcorner"{anchor=center, pos=0.125}, draw=none, from=1-1, to=2-2]
	\arrow["{/G}", from=1-2, to=2-2]
	\arrow[from=2-1, to=2-2]
	\arrow[from=2-1, to=3-1]
	\arrow["\lrcorner"{anchor=center, pos=0.125}, draw=none, from=2-1, to=3-2]
	\arrow[from=2-2, to=3-2]
	\arrow[from=3-1, to=3-2]
\end{tikzcd}\]

\noindent Since the upper square is a pulled back $G$-torsor, it is also a $G$ torsor, and the conclusion follows. 

\begin{comment} 
\[\begin{tikzcd}
	{\Def_X^\star} & {*} \\
	{\Def_X^G} & BG \\
	{\Def_X^{\Aut_k(X)}} & {\mr{B}\Aut_k(X)}
	\arrow[from=1-1, to=1-2]
	\arrow[from=1-1, to=2-1]
	\arrow["\lrcorner"{anchor=center, pos=0.125}, draw=none, from=1-1, to=2-2]
	\arrow["{/G}", from=1-2, to=2-2]
	\arrow[from=2-1, to=2-2]
	\arrow[from=2-1, to=3-1]
	\arrow[from=2-2, to=3-2]
	\arrow[from=3-1, to=3-2]
\end{tikzcd}\]
\end{comment}

\end{proof}

\begin{remark} For finite groups, $\Def_X^G \simeq (\Def_X^\star)_{hG}$ (i.e., the homotopy colimit of the $G$ action), and might be more comfortable seeing  $(\Def_{X}^\star)_{hG}$. However, we will also treat the case that $G$ is a profinite group, and we use the $\Def_X^G$ instead to emphasize that if $G$ is a profinite group we want to remember its topology.\end{remark}

\section{Geometric Modelling}

In this section, we will explore equivalences of deformations of source and target of a functor between stacks.

%\begin{defn}Let $\Def_A$ denote the deformations of $A$ in the prestack $B$. That is, given $\mc{B}: \Z[1/n]-\mathrm{Sch} \to \mathrm{Grpd}$, then $\Def_A^\mc{B}: \widehat{\mathrm{Art}}_k \to \mathrm{Grpd}$ takes $R \mapsto (X \in \mc{B}(R), i: X|_k \simeq A)$.\end{defn}

\begin{defn} 
Given pre-stacks $\mc{M}$ and $\mc{N}$, consider a natural transformation $\mc{F}: \mc{M} \to \mc{N}$. Consider an object $X \in \mc{M}(k)$ and the corresponding object $\mc{F}(X)$ in $\mc{N}(k)$, this induces a functor $$\widetilde{\mc{F}}: \Def_X^{\mc{M}} \to \Def_{\mc{F}(X)}^{\mc{N}}.$$
\end{defn}

\begin{defn} The functor $\widetilde{F}$ is $G$-equivariant if $G$ is preserved under $\mc{F}$, i.e., 

\[\begin{tikzcd}
	G & G \\
	{\Aut_k(X)} & {\Aut_k(\mc{F}(X))} \\
	{\Def_X} & {\Def_{F(X)}}
	\arrow["\simeq", from=1-1, to=1-2]
	\arrow[hook, from=1-1, to=2-1]
	\arrow[hook', from=1-2, to=2-2]
	\arrow["{\mc{F}}", from=2-1, to=2-2]
	\arrow[from=3-1, to=3-1, loop, in=55, out=125, distance=10mm]
	\arrow["{\widetilde{\mc{F}}}", from=3-1, to=3-2]
	\arrow[from=3-2, to=3-2, loop, in=55, out=125, distance=10mm]
\end{tikzcd}\]

\end{defn}

Fabulous, we now have a way of factoring our potentially mysterious action of $G$ on $\Aut(\Def_{FX})$ through a more understandable one, the action of $\Aut(X)$ on $\Def(X).$ If we are greedier, we can ask for even more. 

\begin{lemma} \label{weakgreed} \color{blue}{(greed) If $\Def_X^{\star}\simeq \Def_{\mc{F}X}^\star$ is an equivalence, and $\mc{F}$ is $G$-equivariant, then $\Def_X^G \simeq \Def_{\mc{F}X}^G$ are equivalent.}
\end{lemma}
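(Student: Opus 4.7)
The plan is to reduce the claim to the skydive lemma together with the functoriality of the stacky quotient. First, I would invoke Lemma \ref{skydive} on both sides, obtaining natural equivalences
\[ \Def_X^G \simeq \Def_X^\star / G \quad \text{and} \quad \Def_{\mc{F}X}^G \simeq \Def_{\mc{F}X}^\star / G. \]
Thus it suffices to produce a $G$-equivariant equivalence $\Def_X^\star \xrightarrow{\simeq} \Def_{\mc{F}X}^\star$, since taking the stacky quotient (which is a colimit of the action groupoid) preserves equivalences: an equivariant equivalence between two $G$-prestacks levelwise induces an equivalence between the simplicial diagrams $\cdots G \times G \times M \rightrightarrows G \times M \rightrightarrows M$ and its $\mc{F}$-image, hence on colimits.

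Next I would check that the restriction of $\widetilde{\mc{F}}: \Def_X \to \Def_{\mc{F}X}$ to star-deformations is well-defined and equivariant. Well-definedness is immediate: if $\phi\colon \mf{X}\to\mf{X}'$ in $\Def_X^{\Aut_k(X)}$ reduces to $\mathrm{id}_X$ on special fibers, then $\mc{F}(\phi)$ reduces to $\mc{F}(\mathrm{id}_X) = \mathrm{id}_{\mc{F}X}$. For equivariance, recall that the $G$-action on $\Def_X^\star$ post-composes the identification $\iota\colon \mf{X}|_k \simeq X$ with $g \in G \subseteq \Aut_k(X)$; applying $\mc{F}$ post-composes $\mc{F}(\iota)$ with $\mc{F}(g)$, and the $G$-equivariance of $\mc{F}$ (as encoded in the commuting triangle of the preceding definition) identifies $\mc{F}(g)$ with the image of $g$ in $\Aut_k(\mc{F}X)$. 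Hence $\widetilde{\mc{F}}|_{\Def_X^\star}$ intertwines the two $G$-actions.

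By hypothesis $\widetilde{\mc{F}}|_{\Def_X^\star}\colon \Def_X^\star \to \Def_{\mc{F}X}^\star$ is an equivalence of prestacks. Combining this with the equivariance just established and the colimit-preservation observation yields an equivalence $\Def_X^\star/G \simeq \Def_{\mc{F}X}^\star/G$, which via Lemma \ref{skydive} gives the desired $\Def_X^G \simeq \Def_{\mc{F}X}^G$.

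The only genuinely non-formal point is the equivariance check on the restricted star-deformation stacks; everything else is a bookkeeping consequence of the skydive lemma and the fact that stacky quotients are colimits. I expect no substantive obstacle — the lemma is essentially a direct corollary of Lemma \ref{skydive} packaged as a naturality statement for the quotient construction.
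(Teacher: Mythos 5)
Your proposal is correct and follows essentially the same route as the paper: invoke Lemma \ref{skydive} to rewrite both sides as stacky quotients, then observe that the quotient $(-)/G$ is a (colimit) functor and hence carries the $G$-equivariant equivalence $\Def_X^\star \simeq \Def_{\mc{F}X}^\star$ to an equivalence of quotients. The extra verification of well-definedness and equivariance of the restricted functor is a welcome elaboration but does not change the argument, which the paper states in one line.
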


\begin{proof} Applying a functor to an equivalence preserves the equivalence, and taking the stacky quotient $(-)/G$ is a functor, so $(\Def_X^{\star})/G \simeq (\Def_{\mc{F}X}^\star)/G$ thus by \ref{skydive} $\Def_X^G \simeq \Def_{\mc{F}X}^G$ are equivalent. \end{proof}

We now wish to consider cohomology of prestacks $\widehat{\mathrm{Art}_k} \to \mathrm{Grpd}$. Here we specifically consider the coherent cohomology of a stack defined over a ringed site, as discussed in section \ref{continuouscohom}. 

\begin{cor}
    \label{robot time} \color{blue}{(robot time) If $\Def_X^{\star}\simeq \Def_{FX}^\star$ is an equivalence, and $\mc{F}$ is $G$-equivariant, then $$R\Gamma(\Def_X^G, \OO_{\Def_X^G}) \simeq R\Gamma(\Def_{\mc{F}X}^G, \OO_{\Def_{\mc{F}X}^G}).$$}
\end{cor}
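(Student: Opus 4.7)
The plan is to chain together the two lemmas already established: Lemma \ref{weakgreed} promotes a $G$-equivariant equivalence on star-deformations to an equivalence on $G$-deformations, and then the functoriality of derived global sections (or equivalently Corollary \ref{continuousboogie}) transports the equivalence of stacks to an equivalence of cohomologies.

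Concretely, first I would invoke the hypothesis that $\mc{F}$ induces an equivalence $\Def_X^\star \simeq \Def_{\mc{F}X}^\star$ which is $G$-equivariant. By Lemma \ref{weakgreed}, taking the stacky quotient by $G$ — which is a functor on the category of prestacks with $G$-action — yields an equivalence of stacks $\Def_X^G \simeq \Def_{\mc{F}X}^G$. The construction of $\OO$ as a right Kan extension from affines (so in particular a functor $\mr{Stk}^{op} \to \mr{CRing}$) means that an equivalence of source stacks induces an equivalence on structure sheaves, and hence on derived global sections.

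The second step is to apply $R\Gamma(-, \OO_{(-)})$ to the equivalence $\Def_X^G \simeq \Def_{\mc{F}X}^G$. Since equivalent stacks have equivalent categories of quasi-coherent sheaves (via pullback along the equivalence) and the structure sheaf is pulled back to the structure sheaf under any map of ringed stacks, the derived global sections of $\OO$ are naturally equivalent on the two sides, giving the desired
\[ R\Gamma(\Def_X^G, \OO_{\Def_X^G}) \simeq R\Gamma(\Def_{\mc{F}X}^G, \OO_{\Def_{\mc{F}X}^G}). \]

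If one prefers a more hands-on route, after step one one can further translate both sides through Corollary \ref{continuousboogie} (continuous boogie), rewriting each as continuous group cohomology $R\Gamma_{cts}(G, \OO(\Def_X^\star))$ and $R\Gamma_{cts}(G, \OO(\Def_{\mc{F}X}^\star))$ respectively. These are then equivalent because $\mc{F}$ induces a $G$-equivariant isomorphism of the rings of functions on the star-deformation spaces, and continuous group cohomology is functorial in such equivariant ring maps. Either route works, and I do not anticipate any significant obstacle: the only subtlety to watch is that the equivalence at the level of star-deformations is genuinely $G$-equivariant (so that it descends to quotients) and that the identification of $\OO_{\Def_X^G}$ with the $G$-equivariant sheaf associated to $\OO_{\Def_X^\star}$ — implicit in Lemma \ref{howitbe} — is applied on both sides.
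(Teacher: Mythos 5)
Your proposal is correct and follows the same route as the paper: invoke Lemma \ref{weakgreed} to obtain the equivalence $\Def_X^G \simeq \Def_{\mc{F}X}^G$, then observe that equivalent stacks have equivalent derived global sections of their structure sheaves. The extra detail you supply (functoriality of $\OO$ as a right Kan extension, and the optional detour through continuous group cohomology) only elaborates on what the paper states more tersely.
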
 

\begin{proof} This follows from applying Lemma \ref{weakgreed} and then observing that stacks being weakly equivalent means they are homotopy equivalent, which implies that their cohomology is the same. \end{proof}

\begin{cor}
    \label{group robot time} \color{blue}{(group robot time) If $\Def_X^{\star}\simeq \Def_{\mc{F}X}^\star$ is an equivalence, and $\mc{F}$ is $G$-equivariant, then $$H^*_{cts}(G, \mc{O}({\Def_X^\star})) \simeq H_{cts}^*(G, \mc{O}(\Def_{\mc{F}X}^\star)).$$}
\end{cor}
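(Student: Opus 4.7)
The plan is to assemble this statement directly from the earlier results, since all the ingredients are now in hand. First I would invoke Corollary \ref{robot time} (\emph{robot time}), which under exactly the hypotheses of the current statement ($\Def_X^\star \simeq \Def_{\mc{F}X}^\star$ and $\mc{F}$ a $G$-equivariant natural transformation) provides the isomorphism
\[
R\Gamma(\Def_X^G, \OO_{\Def_X^G}) \simeq R\Gamma(\Def_{\mc{F}X}^G, \OO_{\Def_{\mc{F}X}^G}).
\]
So the work reduces to identifying each side with the corresponding continuous group cohomology.

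Next I would apply Lemma \ref{skydive} to rewrite $\Def_X^G$ as the stacky quotient $\Def_X^\star / G$, and similarly $\Def_{\mc{F}X}^G \simeq \Def_{\mc{F}X}^\star / G$; this is legitimate precisely because $G \subseteq \Aut_k(X) \simeq \Aut_k(\mc{F}X)$ under the equivariance hypothesis, so the same finite/profinite group acts on both sides. Substituting these presentations gives
\[
R\Gamma(\Def_X^\star/G, \OO) \simeq R\Gamma(\Def_{\mc{F}X}^\star/G, \OO).
\]

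Finally, I would apply Corollary \ref{continuousboogie} (\emph{continuous boogie}) to each side separately. This corollary identifies the coherent cohomology of a stacky quotient $\Def^\star/G$ with the continuous group cohomology $H^*_{\mr{cts}}(G, \OO(\Def^\star))$, by unpacking the sheaf-theoretic equivalence of Lemma \ref{howitbe}. Taking $H^i$ on both sides of the isomorphism in the previous step and translating via \ref{continuousboogie} then yields
\[
H^*_{\mr{cts}}(G, \OO(\Def_X^\star)) \simeq H^*_{\mr{cts}}(G, \OO(\Def_{\mc{F}X}^\star)),
\]
which is the desired conclusion.

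The only subtle point—and the one I would flag as the main thing to check rather than an obstacle—is that the $G$-actions being compared on the two sides really do correspond under the identification $G \subseteq \Aut_k(X) \xrightarrow{\sim} \Aut_k(\mc{F}X)$, so that the quotient stack equivalence from \ref{weakgreed} induces the same $G$-equivariant structure on global sections used in \ref{continuousboogie}. This follows formally from the $G$-equivariance diagram in the definition preceding Lemma \ref{weakgreed}, but it is worth noting explicitly so that the identification of continuous $G$-modules on either side is unambiguous. Otherwise the proof is a two-line chain: \ref{robot time} $+$ \ref{continuousboogie} (applied twice).
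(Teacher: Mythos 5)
Your proof is correct and follows essentially the same route as the paper, which simply cites Corollary \ref{robot time} together with Corollary \ref{continuousboogie}; your additional explicit use of Lemma \ref{skydive} to identify $\Def^G$ with $\Def^\star/G$ just makes visible a step the paper leaves implicit.
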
 

\begin{proof} This immediately follows from Lemma \ref{robot time} and Lemma \ref{continuousboogie}. \end{proof}

\section{Two Tower Method}

In the story above, we kept $G$ fixed and compared $\Def_{X}^G$ only to other $G$-torsors. We can also widen our scope of comparison by constructing and comparing it to $G'$-torsors, where $G$ and $G'$ are different. 

Let us now consider the situation where $N$, $N'$, and $M$ be prestacks, such that  $M$ is a $G \times G'$ torsor: it is a $G$-torsor over $N$ and a $G'$-torsor over $N'$. The group actions must commute. 
\[\begin{tikzcd}
	& M \\
	N & {} & N'
	\arrow["{G \times G'}", from=1-2, to=1-2, loop, in=55, out=125, distance=10mm]
	\arrow["G"', from=1-2, to=2-1]
	\arrow["{{{G'}}}", from=1-2, to=2-3]
	\arrow["{{G'}}", from=2-1, to=2-1, loop, in=190, out=260, distance=10mm]
	\arrow["G", from=2-3, to=2-3, loop, in=280, out=350, distance=10mm]
\end{tikzcd}\]

\noindent then, we have an isomorphism of quotient stacks $$N \simeq M / G \qquad \text{and} \qquad N' \simeq M / G'.$$  Note that $N$ has a residual $G'$-action, and $N$ has a residual $N'$-action.

\begin{lemma} \label{spanny} Let us consider the same span of torsors, then we have an isomorphism of quotient stacks
$$N/G' \simeq N'/G$$
\end{lemma}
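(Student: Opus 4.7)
The plan is to exhibit both $N/G'$ and $N'/G$ as the quotient of $M$ by the full $G \times G'$-action, using the fact that iterated quotients by commuting group actions can be collapsed into a single quotient by the product group.

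First, I would unpack the torsor hypotheses to rewrite the base spaces in terms of $M$. Since $M \to N$ is a $G$-torsor, we have a canonical equivalence $N \simeq M/G$; likewise $N' \simeq M/G'$ from the $G'$-torsor structure. The hypothesis that $G$ and $G'$ actions commute (packaged by saying $M$ is a $G \times G'$-torsor) is exactly what is needed to ensure that the residual $G'$-action on $N = M/G$ and the residual $G$-action on $N' = M/G'$ are well-defined as actions on stacks, rather than merely on underlying objects.

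Next, I would invoke the standard Fubini-style identity for stacky quotients by commuting group actions: for a prestack $M$ with commuting actions by $G$ and $G'$, there are canonical equivalences
\[
(M/G)/G' \;\simeq\; M/(G \times G') \;\simeq\; (M/G')/G.
\]
This follows directly from the definition of the stacky quotient as a colimit over the bar construction, since the two iterated bar constructions $B(G', B(G, M))$ and $B(G, B(G', M))$ both compute $B(G \times G', M)$, by a cofinality/reordering argument on the indexing simplicial (or bisimplicial) diagram. Substituting $N \simeq M/G$ and $N' \simeq M/G'$ gives
\[
N/G' \;\simeq\; M/(G \times G') \;\simeq\; N'/G,
\]
which is the desired isomorphism.

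The only subtle point, and the step I would be most careful with, is verifying that the residual action is indeed the correct action to be quotienting by, i.e.\ that the equivalence $N \simeq M/G$ intertwines the residual $G'$-action on $N$ with the $G'$-action on $M/G$ descended from $M$. This is essentially a naturality statement: since the $G$ and $G'$ actions on $M$ commute, $G'$ acts on the whole simplicial bar object $G^{\bullet} \times M$ levelwise, and hence on the colimit $M/G$, and this induced action agrees with the residual action on $N$ by the torsor identification. Once this naturality is checked, the Fubini identity above closes the argument.
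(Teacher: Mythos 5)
Your proposal is correct and is essentially the paper's own argument: the paper's proof is exactly the diagram identifying $G\backslash(M/G')$ and $(G\backslash M)/G'$ with the double quotient $G\backslash M/G'$, then using the torsor identifications $N\simeq M/G$ and $N'\simeq M/G'$. Your additional care about the residual actions being the descended ones is a reasonable elaboration of what the paper leaves implicit.
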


\begin{proof} This follows from 
\[\begin{tikzcd}
	& {G\backslash M/G'} \\
	{G\backslash(M/G')} & {} & {(G\backslash M)/G'} \\
	{G\backslash N'} && {N/G'}
	\arrow["\simeq"', from=1-2, to=2-1]
	\arrow["\simeq", from=1-2, to=2-3]
	\arrow["\simeq"', from=2-1, to=3-1]
	\arrow["\simeq", from=2-3, to=3-3]
\end{tikzcd}\]
\end{proof}

So, given $Y$, we have another approach to modelling $\Def_{Y}^G$. Rather than factoring the $G$-action through a more understandable $G$-action on $\Def_X$ for $X$ with a map to $Y$, which  involves finding $X$ and $\mc{F}$ such that $\mc{F}(X) = Y$, we can instead think about how to consider $\Def_{Y}^G$ as a $G'$-torsor, and compute with $\Def_{Y}^G/G'$ instead.

\begin{cor} \label{bi robot time} \color{blue}{(bi robot time) Given $A, B, M$ as above, then $$R\Gamma(N/G', \OO_{N/G'}) \simeq R\Gamma(N'/G, \OO_{N'/G}).$$}
\end{cor}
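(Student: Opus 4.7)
The plan is essentially to concatenate two results already in hand: the stack-level identification from Lemma \ref{spanny} and the fact that coherent cohomology is a functor on stacks, so invariant under equivalence. So the proof really has only two moving parts and is the two-tower analogue of Corollary \ref{robot time}.

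First I would invoke Lemma \ref{spanny} to get the equivalence of stacks
$$N/G' \simeq N'/G,$$
which is built by viewing $M$ as a $(G\times G')$-torsor and then forming the double quotient $G\backslash M / G'$ in either order. This is the only geometric input; the rest is formal.

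Next, I would apply the structure sheaf and take derived global sections. Since an equivalence $f: N/G' \xrightarrow{\simeq} N'/G$ identifies the structure sheaves (indeed $f^*\OO_{N'/G} \simeq \OO_{N/G'}$ by the definition of $\OO$ as a right Kan extension from affines), pullback along $f$ gives
$$R\Gamma(N'/G, \OO_{N'/G}) \simeq R\Gamma(N/G', f^*\OO_{N'/G}) \simeq R\Gamma(N/G', \OO_{N/G'}),$$
which is exactly the claim.

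I do not expect a serious obstacle here. The only subtle point, which should be addressed in one sentence, is the naturality of $\OO$ with respect to equivalences of stacks; this is immediate from the definition $\OO = \mathrm{RKan}_{\mathrm{Aff}\hookrightarrow \mathrm{Stk}}(\Spec A \mapsto A)$, exactly parallel to how Corollary \ref{robot time} was deduced from Lemma \ref{weakgreed}. If one wishes, one can further combine this with Lemma \ref{howitbe} to reinterpret both sides as continuous cohomology, yielding
$$H^*_{cts}(G', \OO(N)) \simeq H^*_{cts}(G, \OO(N')),$$
but strictly speaking this is not needed for the stated corollary.
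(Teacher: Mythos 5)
Your proposal matches the paper's proof exactly: the paper also deduces the statement directly from Lemma \ref{spanny}, observing that equivalent stacks have the same cohomology. Your added remark justifying $f^*\OO_{N'/G}\simeq \OO_{N/G'}$ via the right Kan extension definition of $\OO$ is a welcome bit of extra care, but the argument is the same.
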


\begin{proof} This follows directly from Corollary \ref{spanny}, if they are equivalent then their cohomology will be the same. \end{proof}

\begin{cor}  \label{bi group time} \color{blue}{(bi group time) Given $N, N', M$ as above, then $$H^*_{cts}(G', \OO(N)) \simeq H^*_{cts}(G, \OO(N')).$$}
\end{cor}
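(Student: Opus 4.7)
The plan is to directly chain two previously established results: Corollary \ref{bi robot time} (bi robot time), which gives an equivalence of coherent cohomology of the two quotient stacks, and Lemma \ref{howitbe} (site for sore eyes), which identifies coherent cohomology of a quotient stack $X/G$ with continuous group cohomology $H^*_{cts}(G,\mc{F}(X))$. Passing from stacks to groups on each side of the bi robot time equivalence independently should yield the desired isomorphism.

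More concretely, first I would apply Corollary \ref{bi robot time} to obtain
$$R\Gamma(N/G', \OO_{N/G'}) \simeq R\Gamma(N'/G, \OO_{N'/G}),$$
which on cohomology gives $H^*(N/G', \OO_{N/G'}) \simeq H^*(N'/G, \OO_{N'/G})$. Next, I would apply Lemma \ref{howitbe} to the left-hand side, with the stack $N$ in place of $X$, the group $G'$ in place of $G$, and the sheaf $\OO_{N}$, yielding
$$H^*(N/G', \OO_{N/G'}) \simeq H^*_{cts}(G', \OO(N)).$$
Symmetrically, applying Lemma \ref{howitbe} to the right-hand side (with $N'$ in place of $X$ and $G$ carrying out the residual action) gives
$$H^*(N'/G, \OO_{N'/G}) \simeq H^*_{cts}(G, \OO(N')).$$
Stringing these three isomorphisms together produces the claimed identification $H^*_{cts}(G', \OO(N)) \simeq H^*_{cts}(G, \OO(N'))$.

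The main potential obstacle is bookkeeping: one must verify that the structure sheaf $\OO_{N/G'}$ is indeed the sheaf on the quotient stack whose pullback to $N$ is $\OO_N$ together with the residual $G'$-action inherited from the $(G \times G')$-action on $M$, so that Lemma \ref{howitbe} applies with the correct $G'$-module structure on $\OO(N)$. This is precisely the identification made implicitly in the passage from Lemma \ref{howitbe} to Corollary \ref{continuousboogie}, where a sheaf $\OO_{X/G}$ is interpreted as $\OO_X$ equipped with its $G$-action. Since $N \simeq M/G$ carries a canonical residual $G'$-action (and similarly $N' \simeq M/G'$ carries a canonical residual $G$-action), this compatibility is immediate from the torsor setup, and no additional work is required beyond concatenating the stated results.
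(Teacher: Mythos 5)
Your proof is correct and matches the paper's intent: the paper's own proof is the one-line ``This is an example of Lemma \ref{howitbe},'' which implicitly means exactly what you spell out — apply Lemma \ref{howitbe} to each side of the equivalence $N/G' \simeq N'/G$ (equivalently, chain it through Corollary \ref{bi robot time}). Your extra remark about checking that $\OO_{N/G'}$ corresponds to $\OO_N$ with its residual $G'$-action is a useful explicit note on a point the paper leaves implicit, but it is the same argument.
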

\begin{proof} This is an example of Lemma \ref{howitbe}.
\end{proof}

\section{Graded Formal Groups Painted As Flower Buds}

In this section, we define formal groups, formal group laws, and twisted versions of both. This ``twisting" is required to work with formal groups endowed with a natural grading which correspond to even periodic cohomology theories, as chern classes come equipped with a grading. We discuss how this twisting relates to choices of morphisms for the category of formal groups, and codify both in terms of the Lie algebra. 

We caution the reader that we discuss commutative formal group laws of \textit{all} dimensions, not just dimension one.

\begin{defn} 
Consider the category $\mr{CRing}^{top}_{R}$ of commutative $R$-algebras which are linearly topologized. The topology on $A$ is linear if there exists a fundamental system of neighborhoods of $0$ consisting of ideals.
\end{defn}

\begin{defn} An element $A \in \mr{CRing}^{top}_{R}$ is called topologically nilpotent if $f^n \to 0$ as $n \to \infty$. We use the notation $A^{\circ \circ}$ to denote the ring of topologically nilpotent elements of $A$. \end{defn}

\begin{defn} A formal group over $R$ of dimension $n$ is a functor $$G: \mr{CRing}^{top}_R \to \mathrm{AbGrp}$$
such that its forgetful functor $$U(G): \mr{CRing}^{top}_R \to \mathrm{Set}$$ is Zariski locally in $R$ isomorphic to the functor which sends a ring to its topologically nilpotent elements $$A \mapsto (A^{\circ\circ})^n.$$ 
\end{defn}

\begin{remark} Note that $\Hom_{cts}(\Spf(A), \widehat{\A}^n_R) \simeq (A^{\circ\circ})^n$, in other words, Zariski locally a formal group has an isomorphism $U(G) \simeq  \widehat{\bb{A}}^n_R$. \end{remark} 

\begin{defn} A formal group law is a formal group $G$ together with a global isomorphism $\phi: U(G) \simeq \widehat{\bb{A}}^n_k$ on underlying sets. 
\end{defn} 

\begin{remark}  Note that being an isomorphism on underlying sets also guarantees that multiplication will be given by $$((b_1, ..., b_n), (c_1, ..., c_n)) \mapsto F((b_1, ..., b_n), (c_1, ..., c_n))$$ where $F \in \mr{Grp}(\widehat{\bb{A}}^n_R)$. \end{remark}

\begin{remark} Any abelian group structure on $\widehat{\A}^n_R$ as a sheaf over $\Spec(R)$ with 0 as a unit comes from a unique formal group law over $R$. \end{remark}

\begin{remark} Morphisms of formal groups are morphisms of functors valued in abelian groups of the form $\Aut(\hat{\bb{A}}^n).$ For example, morphisms of one-dimensional formal groups are locally of the form of a power series $f(t) := a_1t+a_2t^2+...$ in $R[[t]]$, and in higher dimensions are of the form $f(t_1, ..., t_d) = (f_0(t_1, ..., t_d), f_1(t_1, ..., t_d), \cdots, f_d(t_1, ..., t_d)).$
\end{remark}

\subsection{Graded Formal Groups}

\begin{defn} Given $\mc{L}$ an invertible $R$-module, i.e. a map $[\mc{L}] : \Spec R \to B\G_m$, an $\mc{L}$-twisted formal group over $R$ of dimension $n$ is a functor $$G: \mr{CRing}^{top}_R \to \mathrm{AbGrp}$$
such that its forgetful functor to $\mathrm{Set}$ is Zariski locally in $R$ isomorphic to the functor which sends a ring to 
$$A \mapsto \Hom_R(\mc{L}, (A^{\circ\circ})^n).$$

\noindent This is equivalent to putting an abelian group structure on the formal scheme $\Spf (\bigoplus_i \mc{L}^{\otimes i}).$ 
\end{defn}

In the 1-dimensional case, this gives a formal group law of the form $\sum c_{ij} x^i y^j$ where $c_{ij} \in \mc{L}^{\otimes (i + j-1)}$. 

In topology, $x$ and $y$ are considered to be of degree $-2$ to reflect that they are first chern classes of line bundles. For example, in complex K-theory, the bott class $\beta = c_{11}$ in $KU$ a la Snaith.

\begin{defn} The Lie algebra of a 1 dimensional formal group $G$ is the Lie algebra given by the kernel $$\mr{Lie}(G) = \ker \big(G(k[x]/x^2) \to G(k) \big).$$
In other words $\mr{Lie}(G)$ is defined by the functor of points for the tangent space of the identity section of the formal group $G$, which canonically carries a Lie bracket induced by the formal group law, with $k$-module structure induced by the scaling action on the $x$ coordinate.
\end{defn} 

\begin{defn} 
The dualizing line $\omega_G$ of a formal group $G$ is the dual of the Lie algebra. Equivalently, it is the cotangent space at the identity section of the formal group.
\end{defn} 

\begin{remark} In the higher-dimensional case, we replace the dualizing lines $\omega_G$ by the cotangent space at the identity. \end{remark}

\begin{lemma} Given a formal group $G$ of dimension $n$, if its Lie algebra admits a trivialization $$f: \mr{Lie}(G) \simeq k^n,$$ then the formal group admits admits a trivialization, that is, there exists an isomorphism $$\widetilde{f}: U(G) \simeq \widehat{\bb{A}}^n_k.$$ 
\end{lemma}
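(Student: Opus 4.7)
The plan is to invoke the Zariski-local trivialization guaranteed by the definition of formal group and then, if desired, adjust it by a linear automorphism of $\widehat{\bb{A}}^n_k$ so that its induced map on Lie algebras recovers the prescribed $f$. As written, the lemma only asks for existence of some $\widetilde{f}$; but since $\mr{Lie}(G)$ is automatically free of rank $n$ over the field $k$, the hypothesis on $f$ is really providing a preferred trivialization which one would naturally want $\widetilde{f}$ to lift, so I will arrange this as well.

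First I would observe that $\Spec k$ consists of a single point, so the Zariski-local condition from the definition of formal group globalizes for free: there exists some isomorphism $\widetilde{f}_0 \colon U(G) \simeq \widehat{\bb{A}}^n_k$. This already proves the stated conclusion.

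To additionally arrange $\mr{Lie}(\widetilde{f}) = f$, I would apply the Lie algebra functor to $\widetilde{f}_0$ to obtain a trivialization $\mr{Lie}(\widetilde{f}_0) \colon \mr{Lie}(G) \simeq \mr{Lie}(\widehat{\bb{A}}^n_k) = k^n$, and compare it with $f$. The two differ by an element $M := \mr{Lie}(\widetilde{f}_0) \circ f^{-1} \in \mr{GL}_n(k)$. Now $\mr{GL}_n(k)$ acts on $\widehat{\bb{A}}^n_k$ by linear coordinate changes, and the key observation is that this linear action induces on $\mr{Lie}(\widehat{\bb{A}}^n_k) = k^n$ the tautological matrix action: this is immediate from $\mr{Lie}(\widehat{\bb{A}}^n_k) = \ker\bigl(\widehat{\bb{A}}^n_k(k[\epsilon]/\epsilon^2) \to \widehat{\bb{A}}^n_k(k)\bigr)$, whose elements are tuples $(a_1\epsilon,\ldots,a_n\epsilon)$ that matrices act on in the obvious way. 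Setting $\widetilde{f} := M^{-1} \circ \widetilde{f}_0$, functoriality of $\mr{Lie}$ yields $\mr{Lie}(\widetilde{f}) = M^{-1} \circ \mr{Lie}(\widetilde{f}_0) = f$, as desired.

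There is no real obstacle: the existence of $\widetilde{f}$ reduces to the observation that over a field the Zariski-local condition in the definition of a formal group is automatically global, and the compatibility with $f$ is a one-line linear-algebra adjustment. The only points worth double-checking are that $\mr{Lie}$ sends isomorphisms to isomorphisms (immediate, since it is a kernel of a map of functors that are manifestly isomorphism-preserving) and that a linear automorphism of $\widehat{\bb{A}}^n_k$ acts on its tangent space at the origin by the same matrix, which is direct from the explicit description above.
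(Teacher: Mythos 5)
Your argument is correct, and it is worth noting that you identified the real content of the statement: over the field $k$ the bare existence of \emph{some} trivialization is immediate (as you say, $\Spec k$ is a one-point space, so the Zariski-local condition in the definition is already global), and the substance is producing a $\widetilde{f}$ whose linear term is the prescribed $f$ --- which is exactly what the remark following the lemma in the paper is about. Your route differs from the paper's, though. The paper's (only sketched) argument constructs $\widetilde{f}$ directly by lifting the coordinates: a trivialization $f$ of $\mr{Lie}(G)$ is a choice of linear coordinates modulo degree $2$, i.e.\ a point of $U(G)(k[[x_i]]/(x_i^2))$, and one lifts it (non-uniquely) along $U(G)(k[[x_i]]) \to U(G)(k[[x_i]]/(x_i^2))$ to a genuine coordinate system; the non-uniqueness of the lift is precisely the $\G^{s}_{\mr{inv}}$-torsor of strict coordinates that the paper exploits later. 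You instead start from an arbitrary global trivialization $\widetilde{f}_0$ and correct it by the element $M \in \mr{GL}_n(k)$ measuring the discrepancy on tangent spaces; the computation that linear coordinate changes act tautologically on $\mr{Lie}(\widehat{\bb{A}}^n_k)$ is right, and the correction $\widetilde{f} = M^{-1}\circ\widetilde{f}_0$ does satisfy $\mr{Lie}(\widetilde{f})=f$. The trade-off: your argument is shorter but leans on the base being a field in its very first step, whereas the direct-lifting argument is the one that survives over a general (affine) base, where $U(G)$ is only locally trivial and one must instead argue that the torsor of coordinates inducing $f$ is a torsor under a pro-unipotent group and hence trivial on affines. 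The only point you might make explicit is that $\mr{Lie}(\widetilde{f}_0)$ is $k$-linear even though $\widetilde{f}_0$ is only an isomorphism of the underlying set-valued functors; this holds because the $k$-module structure on the tangent space is defined functorially from the ring $k[\epsilon]/\epsilon^2$ (scaling via $\epsilon\mapsto\lambda\epsilon$, addition via the square-zero fiber product) and agrees with the one induced by the group law, but it is a step rather than a tautology.
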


\begin{remark} Given a trivialization $f$ of the Lie algebra $\mr{Lie}(G)$ of a formal group $G$, there exists a lift $\widetilde{f}$ which gives a coordinate system for $U(G)$. The trivialization $f$ does \textit{not} uniquely determine $\widetilde{f}$. \end{remark}

\begin{comment}
\begin{proof} A formal group $G$ with a map trivializing its Lie algebra $f: \mf{g} \simeq k^n,$ may lift to a non-unique map on its underlying scheme $\widetilde{f}: U(G)(k[[x_i]]) \to \widehat{\mathbb{A}^n_k}(k[[x_i]])$, allowing us to identity $G$ as a formal group law. This lift goes as follows:

\[

\begin{tikzcd}
\sum a_If(x)^I        & \arrow[l, maps to]  \Sigma a_Ix^I                       &                                  \\
{UG(k[[x_i]])} \arrow[d] & {(x_i)k[[x_i]]} \arrow[d] \arrow[l] & f \arrow[d, maps to]             \\
{U(\mf{g}) := UG(k[[x_i]]/x_i^2)}     & \bigoplus kx_i \arrow[l, "f"]    & f \equiv \text{mod }\text{deg 2} \\
f(x_i)                 & x_i \arrow[l, maps to]              &                                 
\end{tikzcd}
\]
\end{proof}
\end{comment}

\begin{defn} The moduli of one dimensional formal groups  $\mc{M}_{\mr{fg}_1}$ is the \'etale sheaf that associates to any ring $R$ the groupoid $G$ where $G \to \Spec R$ is a formal group. \end{defn}

\begin{defn} The moduli of one dimensional formal groups with trivialized Lie algebra $\mc{M}_{\mr{fg}_1}^{\mr{Lie} \simeq \mr{triv}}$ is the \'etale sheaf that associates to any ring $R$ the groupoid of pairs $(G, \phi)$ where $G \to \Spec R$ is a formal group and $\phi: \omega_G \simeq R$ is the trivialization of its sheaf of invariant differentials. \end{defn}

\begin{remark} The trivialization of a sheaf of invariant differentials is the same as a choice of globally non vanishing differential. \end{remark}

\begin{defn} \label{invariant} Let  $\G_{inv}: \mr{CRing} \to \mr{Grp}$ be the affine group scheme of invertible power series defined on points as $$\G_{inv}(R) := \big\{ \phi(x) := \sum_{i \geq 0} b_ix^{i+1} \in R[[x]] \text{ } | \text{ } b_0 \in R^\times \big\}.$$ \end{defn} 

Notice that $\G_{inv}$ admits a semi direct product decomposition as $\G_{inv} := \G_{inv}^s \rtimes \G_m,$ where $\G_{inv}(R) := \big\{ \phi(x) := \sum_{i \geq 0} b_ix^{i+1} \in R[[x]] \text{ } | \text{ } b_0 = 1\}.$ When we consider a moduli stack of formal groups, we may either take general morphisms, $\G_{inv}$, or restrict ourselves to morphisms $h: G_0\to G_1$ that are the identity on the Lie algebra of the formal groups $\Lie(h) = \mr{id}: \mr{Lie}(G_0) \to \mr{Lie}(G_1)$ (i.e., $b_0 = 1$). These are also called strict morphisms, denoted above as $\G_{inv}^s$. 

\begin{lemma} \cite{finiteheight} (pg 55) \label{trivvy}  If $G$ comes from a formal group law $F$, then there's a map
$$\mr{Fgl} \to \mc{M}_{\mr{fg}_1}^{\mr{Lie} \simeq \mr{triv}}.$$ \noindent This map is not $\G_{inv}$-invariant, but it is $\G^{s}_{inv}$-invariant, as isomorphisms of formal group laws do not have to preserve our chosen distinguished invariant differentials. The ones that do are the strictly invertible power series $\G^s$. 
\end{lemma}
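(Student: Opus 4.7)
The plan is to unpack the functor of points on each side and then chase where a change of coordinates $\phi \in \G_{inv}(R)$ sends the induced trivialization of $\omega_G$.

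First I would construct the map. An $R$-point of $\mr{Fgl}$ is a formal group law, i.e., a one-dimensional formal group $G$ over $R$ together with a global coordinate $\psi \colon U(G) \simeq \widehat{\bb{A}}^1_R$. Differentiating $\psi$ at the identity section gives a basis element $d\psi|_0$ of $\omega_G = \mr{Lie}(G)^\vee$, hence a trivialization $\omega_G \simeq R$. Assigning $(G, \psi)\mapsto (G, d\psi|_0)$ defines the functor $\mr{Fgl} \to \mc{M}_{\mr{fg}_1}^{\mr{Lie}\simeq\mr{triv}}$; naturality in $R$ is immediate from the chain rule, since base change commutes with taking the identity section and its cotangent space.

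Next I would compute the action. An element $\phi(x) = \sum_{i\geq 0} b_i x^{i+1} \in \G_{inv}(R)$ acts on $\mr{Fgl}(R)$ by replacing the coordinate $\psi$ with $\phi \circ \psi$, leaving the underlying formal group $G$ fixed. Differentiating at the origin gives
\[
d(\phi \circ \psi)|_0 \;=\; \phi'(0)\cdot d\psi|_0 \;=\; b_0\, d\psi|_0.
\]
Thus the induced trivialization of $\omega_G$ is multiplied by the scalar $b_0 \in R^\times$. In particular the composed map $\mr{Fgl}\to\mc{M}_{\mr{fg}_1}^{\mr{Lie}\simeq\mr{triv}}$ is unchanged precisely when $b_0 = 1$, i.e., exactly on $\G_{inv}^s \subseteq \G_{inv}$. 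This both verifies invariance under $\G_{inv}^s$ and exhibits explicit elements of $\G_{inv}$ (those with $b_0 \neq 1$) which are not invariant, matching the semidirect product decomposition $\G_{inv} = \G_{inv}^s \rtimes \G_m$ noted just after Definition \ref{invariant}.

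The only subtlety, and the step I would spend the most care on, is the identification of $d\psi|_0$ as a genuine generator of $\omega_G$ rather than just an element of a particular presentation: one must check that this construction is independent of the chosen affine chart Zariski-locally used to identify $U(G)$ with $\widehat{\bb{A}}^1$, and that the cotangent-line description of $\omega_G$ agrees with the ``sheaf of invariant differentials'' description used in the definition of $\mc{M}_{\mr{fg}_1}^{\mr{Lie}\simeq\mr{triv}}$. Once this is in place, the rest is a one-line transformation law and the conclusion follows.
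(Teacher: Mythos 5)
The paper does not actually prove this lemma; it is cited directly from \cite{finiteheight} (p.~55) with only the parenthetical remark that isomorphisms of formal group laws need not preserve the chosen invariant differential. Your argument is correct and is exactly the standard one behind that remark: the coordinate $\psi$ induces the trivialization $d\psi|_0$ of $\omega_G$, and a change of coordinate $\phi$ rescales it by $\phi'(0)=b_0$, so the map is invariant precisely under $\G_{inv}^s$ (where $b_0=1$) and not under all of $\G_{inv}$.
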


\begin{lemma} \cite{finiteheight} (pg 55)
There's a map $\mc{M}_{\mr{fg}_1}^{\mr{Lie} \simeq \mr{triv}} \to \mc{M}_{\mr{fg}_1}$ which is $\G_m$-invariant, because locally any two trivializations differ by an action of $\G_m$, 
$$\mc{M}_{\mr{fg}}^{\mr{Lie} \simeq \mr{triv}}/\G_m \simeq \mc{M}_{\mr{fg}_1}$$
\end{lemma}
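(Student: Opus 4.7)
The plan is to exhibit the forgetful map $p \colon \mc{M}_{\mr{fg}_1}^{\mr{Lie}\simeq\mr{triv}} \to \mc{M}_{\mr{fg}_1}$, $(G,\phi)\mapsto G$, as a $\G_m$-torsor in the \'etale topology for a natural $\G_m$-action on the source. Once this is established, the equivalence $\mc{M}_{\mr{fg}_1}^{\mr{Lie}\simeq\mr{triv}}/\G_m \simeq \mc{M}_{\mr{fg}_1}$ follows formally from the torsor pullback and the universal property of the stacky quotient.

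First I would define the $\G_m$-action on $\mc{M}_{\mr{fg}_1}^{\mr{Lie}\simeq\mr{triv}}$: for $\lambda \in \G_m(R) = R^\times$ and a pair $(G,\phi)$ over $R$ with $\phi\colon \omega_G \simeq R$, set $\lambda\cdot(G,\phi) := (G, \lambda\phi)$, the rescaling of the trivialization. This is natural in $R$, compatible with morphisms of formal groups in $\mc{M}_{\mr{fg}_1}^{\mr{Lie}\simeq\mr{triv}}$, and $p$ is manifestly $\G_m$-invariant since it forgets the trivialization. Next I would verify that $p$ is a $\G_m$-torsor, which amounts to two conditions: (i) $p$ admits local sections on $\mc{M}_{\mr{fg}_1}$, and (ii) $\G_m$ acts freely and transitively on fibres. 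For (i), given a formal group $G$ over $R$, the dualizing line $\omega_G$ is an invertible $R$-module, so Zariski-locally on $\Spec R$ a trivialization $\phi$ exists, yielding a local section of $p$. For (ii), any two trivializations $\phi,\phi'\colon \omega_G \simeq R$ of the same invertible module differ by a unique unit $\lambda \in R^\times$ with $\phi' = \lambda\phi$, which supplies the pullback identification
$$\mc{M}_{\mr{fg}_1}^{\mr{Lie}\simeq\mr{triv}} \times_{\mc{M}_{\mr{fg}_1}} \mc{M}_{\mr{fg}_1}^{\mr{Lie}\simeq\mr{triv}} \simeq \G_m \times \mc{M}_{\mr{fg}_1}^{\mr{Lie}\simeq\mr{triv}}$$
characterizing a $\G_m$-torsor.

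With the torsor structure in hand, the claimed equivalence then follows: the torsor defining square $p$ factors through the quotient stack to give a fully faithful and essentially surjective map $\mc{M}_{\mr{fg}_1}^{\mr{Lie}\simeq\mr{triv}}/\G_m \to \mc{M}_{\mr{fg}_1}$. The essentially surjective part is (i), and the fully faithful part is (ii), phrased in terms of morphism groupoids. The main subtlety to watch is the 2-categorical book-keeping: I must check that an arbitrary isomorphism of formal groups in $\mc{M}_{\mr{fg}_1}$ (which may act nontrivially on the Lie algebra) is recovered in the quotient as the composite of a trivialization-preserving isomorphism with the $\G_m$-element measuring the induced rescaling on $\omega$. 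This is essentially the semi-direct product decomposition $\G_{inv} = \G_{inv}^s \rtimes \G_m$ of Definition \ref{invariant}: the $\G_{inv}^s$-piece is recorded by morphisms in $\mc{M}_{\mr{fg}_1}^{\mr{Lie}\simeq\mr{triv}}$, and the $\G_m$-piece is adjoined precisely by the stacky quotient.
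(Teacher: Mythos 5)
Your proposal is correct: exhibiting the forgetful map as a $\G_m$-torsor via local trivializability of the invertible module $\omega_G$ and the fact that two trivializations differ by a unique unit is exactly the standard argument, and it matches the treatment in the cited reference; the paper itself states this lemma with only a citation and supplies no proof of its own. Your closing remark relating the morphism bookkeeping to the decomposition $\G_{inv} = \G_{inv}^s \rtimes \G_m$ is consistent with how the paper sets up strict versus general isomorphisms around Definition \ref{invariant} and Lemma \ref{trivvy}.
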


Even though it is slightly evil, emboldened by Lemma \ref{trivvy} the we will use the notation $\mc{M}_{\mr{fg}_1}^s$ for $\mc{M}_{\mr{fg}_1}^{\mr{Lie} \simeq \mr{triv}}$.

%\begin{remark} We give here intuition on why removing $\G_m$ removes the Lie algebra trivialization. The morphism $f: F \to G$ specified by $f = b_0t+b_1t^2+\cdots$ induces on Lie algebras a map $\text{Lie } f \colon \text{Lie } F \to \text{Lie } G$ specified only by $b_0 \in \G_m$.  \noindent Specifying $b_0 \in \G_m$ is the same as asking for the morphisms $f$ to have the property that $a=f'(0)b$ is the same as specifying this chosen map on the Lie algebras, which acts only by the $\G_m \subset \G_{inv} := \G_{inv}^s \rtimes \G_m.$ \end{remark}

%The unit acts by $a = bf'(0)$,  Having an action by $\G_m$ is equivalent to having a grading.

\begin{remark} Let $G$ be the formal group specified by a map $[G]: \Spec R \to \mc{M}_{\mr{fg}_1}$. The graded ring $\Spec (\bigoplus_{n \in \Z} \mc{L}^{\otimes n })$ has a natural interpretation as the coordinate ring of a principle $\G_m$-bundle corresponding to the Lie-algebra of $G$, which is also the universal scheme over which the latter admits a trivialization. 
\[\begin{tikzcd}
	{\Spec (\bigoplus_{n \in \Z} \mc{L}^{\otimes n })} & {\mc{M}_{\mr{fg}_1}^{\mr{Lie }\simeq \mr{triv}}} \\
	{\Spec(R)} & {\mc{M}_{\mr{fg}_1}}
	\arrow[from=1-1, to=1-2]
	\arrow[from=1-1, to=2-1]
	\arrow["{/\G_m}", from=1-2, to=2-2]
	\arrow["{[G]}", from=2-1, to=2-2]
\end{tikzcd}\]
\end{remark}

We conclude with an informal discussion on the role in topology of the graded element $\beta$ (coming from the Lie algebra). This will be revisited in Section \ref{representability}. 

%formal group quillen formal group on

\begin{lemma} Let $E_*$ be a graded ring free over a ring $E_0$ of the form $E_* \simeq E_0[\beta^{\pm}]$, where $|\beta| = -2$. Given a graded formal group over such a ring $E_*$, it is equivaelnt to an an ungraded formal group over $E_0$ once $\beta$ is chosen. In other words, there's a non-canonical isomorphism of stacks between $\mc{M}_{\mr{fg}_1}/(E_{2*}/\G_m)$ and $\mc{M}_{\mr{fg}_1}/E_0.$ \end{lemma}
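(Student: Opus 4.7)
The plan is to interpret $\mc{M}_{\mr{fg}_1}/B$ as the groupoid $\Hom_{\mr{Stk}}(B, \mc{M}_{\mr{fg}_1})$ of formal groups over the base stack $B$. Under this reading, the claim reduces to establishing a (non-canonical) equivalence of base stacks
$$\Spec E_{2*}/\G_m \simeq \Spec E_0,$$
since applying the functor of points $\Hom_{\mr{Stk}}(-, \mc{M}_{\mr{fg}_1})$ is contravariantly functorial and preserves equivalences. Graded formal groups over $E_*$ on the left (depending only on the even subring $E_{2*}$ with its induced $\G_m$-action) then match ungraded formal groups over $E_0$ on the right once the equivalence of bases is produced.

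First I would unpack the graded ring structure. Since $E_* \simeq E_0[\beta^{\pm}]$ with $|\beta| = -2$, the even-degree subring is $E_{2*} \simeq E_0[\beta^{\pm 1}]$, a Laurent polynomial ring in one generator over $E_0$. Under the dictionary between $\Z$-graded commutative rings and $\G_m$-equivariant affine schemes, the grading on $E_{2*}$ (with $\beta$ of weight $1$ after rescaling by $2$) corresponds to a $\G_m$-action on $\Spec E_{2*}$. A choice of generator $\beta$ yields a $\G_m$-equivariant isomorphism
$$\Spec E_{2*} \simeq \Spec E_0 \times_{\Spec k} \G_m,$$
where $\G_m$ acts trivially on the $\Spec E_0$ factor and by translation on itself.

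Next I would take the stacky quotient. Since the $\G_m$-action on the right-hand side is free on the $\G_m$ factor, the quotient is the trivial $\G_m$-torsor collapsing, giving
$$\Spec E_{2*}/\G_m \simeq (\Spec E_0 \times \G_m)/\G_m \simeq \Spec E_0.$$
Applying $\Hom_{\mr{Stk}}(-, \mc{M}_{\mr{fg}_1})$ to this equivalence yields the claimed isomorphism between groupoids of formal groups. The non-canonicality is transparent: the composite $\Spec E_0 \simeq \Spec E_{2*}/\G_m$ depends on the trivializing class $\beta$, and a different Laurent generator differs by a unit, altering the isomorphism. No serious obstacle is anticipated; the only step that requires care is the identification of the $\Z$-grading on $E_{2*}$ with the correct $\G_m$-action on its spectrum, which is the standard dictionary between commutative $\Z$-graded rings and $\G_m$-equivariant affine schemes.
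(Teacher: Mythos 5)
Your argument is correct, but it runs along a genuinely different track from the one the paper takes. You prove the ``in other words'' clause head-on: using the dictionary between $\Z$-graded commutative rings and $\G_m$-equivariant affine schemes, a choice of $\beta$ trivializes $\Spec E_{2*}$ as the trivial torsor $\Spec E_0 \times \G_m$, so $\Spec E_{2*}/\G_m \simeq \Spec E_0$, and the stack isomorphism follows by contravariant functoriality of $\Hom_{\mr{Stk}}(-,\mc{M}_{\mr{fg}_1})$. This is the cleaner route to the displayed isomorphism and makes the non-canonicality (dependence on the Laurent generator) completely explicit. The paper's proof is only a sketch and addresses a complementary point that you pass over silently: why a \emph{graded} formal group over the graded-\emph{commutative} ring $E_*$, whose coordinates $x,y$ sit in degree $-2$, can be identified with a $\G_m$-equivariant (i.e.\ honestly $\Z$-graded commutative) formal group in the first place --- the observation being that the Koszul sign rule is vacuous in even degrees, so one may halve the grading with no signs appearing. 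That identification is exactly the input your ``standard dictionary'' step needs, so your proof and the paper's sketch are best read as the two halves of a complete argument: the paper justifies passing from graded-commutative to $\G_m$-equivariant data, and you carry out the descent along the trivialized torsor. No step of yours fails; I would only ask you to record the even-degree/sign-rule remark before invoking the dictionary.
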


\begin{proof} (sketch) An even periodic graded formal group is a commutative graded ring where $x$ and $y$ anti commute. In topology, since $x$ and $y$ are evenly graded in degree $-2$, our enforced sign rule is vacuous. Thus, it's equivalent to considering the case where $x$ and $y$ are in degree $-1$ and there is no sign rule in the graded ring. 
\end{proof}

%dualizing line of quillen formal group, as picking a dualization is the same 

\begin{remark} 
Here we take $E$ to be a complex orientable cohomology theory so that every complex line bundle $L$ on a space $X$ admits a first Chern class $c_1(L) \in E^2(X)$. If one considers a formal group law $c_1(L) +_F c_1(L')$, nonlinear terms such as $c_1(L)c_1(L')$ in the power series correspond to a cup product of first chern classes. Such a cup product which would take us straight out of $E^2(X)$ and into $E^4(X)$. However, we can \textit{maintain a consistent grading} by multiplying nonlinear factors by a class $\beta^{-1}$ in order to shift the degree to consider the power series entirely internal to $E^2(X).$  
\end{remark}

%\begin{remark} \cite{finiteheight} (Prop 12.2) $\mr{Fgl} \times_{\mc{M}_{\mr{fg}}^{\mr{Lie} \simeq \mr{triv}}}  \mr{Fgl} \simeq \mr{Fgl} \times \G_{inv} \simeq \Spec(MU_*MU).$ It is thus common to conflate $\mc{M}_{\mr{fg}}^{\mr{Lie}\simeq \mr{triv}}$ and $\mc{M}_{\mr{fg}}^s$ (the stack of moduli groups of curves with strict isomorphisms, $\mr{Fgl}/\G^s_{inv}.)$\end{remark}

%Note that the Ext-groups describing the $MU_*$-based spectral sequence are taken in the category of $MU_*MU$-modules, equipped with compatible grading. We can either use the approach of considering as a graded stack classifying formal group laws ``of degree 2" with trivialized Lie algebra, (and the quasi-coherent sheaves over it would correspond exactly to the category of graded comoduless). An even grading on a ring $R$ is equivalent to a $\G_m$ action on $\Spec R$, so 

%\begin{lemma}There is an equivalence of groupoids (not an isomorphism) between the category of formal groups with strict isomorphism and the category of formal groups with an invariant differential chosen (equivalent to choosing trivialization of the lie algebra?). This is true for dim 1 at least.\end{lemma}

\subsection{Height of a Formal Group Law and Their Classification}

%\on{this part is: define height, classify 1-d fgls, calculation of automorphism of 1-d formal group laws}

This section is devoted to the consideration of formal groups in characteristic $p$.

\begin{lemma}The category of formal groups is equivalent to the category of connected $p$-divisible groups. \end{lemma}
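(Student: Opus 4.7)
The plan is to construct a quasi-inverse pair of functors between the two categories. In one direction, given a commutative formal group $G$ over a base of characteristic $p$, define $\Phi(G) := (G[p^n])_{n \geq 1}$ where $G[p^n] := \ker([p^n]_G\colon G \to G)$; in the other, given a connected $p$-divisible group $\mathcal{G} = (G_n)$ with $G_n = \Spec A_n$, define $\Psi(\mathcal{G}) := \Spf(\varprojlim_n A_n)$, equipped with the formal group structure induced by the compatible Hopf algebra structures on the $A_n$.

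For $\Phi$, the central input is that in characteristic $p$ the multiplication-by-$p$ map $[p]_G$ is an isogeny: it factors through a power of the relative Frobenius and is therefore a finite flat surjection of formal schemes. Consequently each $G[p^n]$ is a finite flat commutative group scheme, the inclusions $G[p^n] \hookrightarrow G[p^{n+1}]$ satisfy $G[p^n] = G[p^{n+1}][p^n]$, and the short exact sequences $0 \to G[p^n] \to G[p^{n+k}] \xrightarrow{p^n} G[p^k] \to 0$ exhibit $\Phi(G)$ as a $p$-divisible group. Each $G[p^n]$ is set-theoretically supported at the identity section, hence infinitesimal and in particular connected, so $\Phi(G)$ does land in connected $p$-divisible groups. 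For $\Psi$, the structural content is that, by connectedness of the system, the projective limit $A := \varprojlim_n A_n$ is Zariski-locally isomorphic as a topological algebra to a formal power series ring $R[[x_1,\ldots,x_d]]$; granting this, the induced coproduct on $A$ realizes $\Spf A$ as a commutative formal group in the sense of the preceding section.

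To see $\Phi$ and $\Psi$ are mutually inverse, for a formal group $G$ the isogeny property makes the defining ideals of the $G[p^n]$ into a fundamental system of open neighborhoods of zero inside $\mathcal{O}(G)$, so $\mathcal{O}(G) \simeq \varprojlim_n \mathcal{O}(G[p^n])$ and $G \simeq \Psi(\Phi(G))$; in the other direction, the $p^n$-torsion subscheme of the colimit $\Psi(\mathcal{G})$ is cut out by the equations $[p^n] = 0$, and this recovers $G_n$ exactly. The main obstacle is the power-series identification underpinning $\Psi$: showing that the coordinate ring of the colimit of a connected $p$-divisible group is smooth over the base is the substantive content of this equivalence and is a classical structural result due to Cartier and Tate. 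Once this is granted, everything else is a formal consequence of the observation that $[p]$ is an isogeny with finite flat connected kernels and that, reciprocally, a compatible tower of such kernels reassembles into a formal Lie group.
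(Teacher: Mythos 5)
The paper offers no proof of this lemma at all---it is stated as a bare assertion---so there is nothing to compare your argument against except the classical literature. Your overall strategy is the standard one (the Cartier--Tate dictionary: $G \mapsto (G[p^n])_n$ one way, $\mathcal{G} \mapsto \Spf(\varprojlim_n A_n)$ the other, with Tate's smoothness theorem carrying the substantive weight in the second direction), and that is the right skeleton.

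However, there is one genuine gap: your claim that ``in characteristic $p$ the multiplication-by-$p$ map $[p]_G$ is an isogeny: it factors through a power of the relative Frobenius and is therefore a finite flat surjection'' is false in this generality. Factoring through Frobenius does not make a map finite flat; the additive formal group $\widehat{\G}_a$ has $[p]=0$, which factors through every power of Frobenius, yet $G[p^n]=G$ is not finite and $\Phi(\widehat{\G}_a)$ is not a $p$-divisible group. Finite flatness of $[p]$ is precisely the condition that $G$ have finite height (equivalently, that $G$ be $p$-divisible as a formal group), and this hypothesis cannot be derived---it must be imposed. The lemma as stated in the paper, quantifying over ``the category of formal groups'' with no height restriction, is therefore not literally true either, and your proof inherits this defect at exactly the step where you assert the isogeny property for free. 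The fix is to restrict the source category to formal groups of finite height (over a complete Noetherian local base with residue characteristic $p$, which should also be made explicit); with that restriction your argument, modulo the cited input of Cartier and Tate, goes through.
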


\begin{defn} Given $R$ a commutative ring in characteristic $p$, there is a map $\varphi_R: R \to R$ such that $\varphi_R(x) = x^p.$ For a commutative $R$-algebra $A,$ with structure map $R \xrightarrow{f} A,$ we denote $A^{1/p^h}$ as the corresponding $R$-algebra defined via the structure map $$R \xrightarrow{(\varphi_{X})^h} R \xrightarrow{f} A.$$ Given a functor $X$ with source category $\mr{CAlg}_R$, we define $X^{(p^h)}(A) := X(A^{1/p^h}).$
There is a natural map $\varphi_{X/R}^h: X \to X^{(p^h)}$ called the \textbf{relative Frobenius map}.
\end{defn}

We now introduce a key property of formal groups over characteristic $p$ fields. 

\begin{lemma} \cite{ell2} (Prop 4.4.5) Given a map $f: G \to G'$ in $\mc{M}_{\mr{fg}}(R)$, the following conditions are equivalent: 
\begin{itemize}
\item The pullback map $f^* \colon \omega_G \to \omega_{G'}$ vanishes.
\item The morphism $f$ factors as a composition $G \xrightarrow{\phi_G} G^{(p)} \xrightarrow{g} G'$.
\end{itemize}
If these conditions are satisfied, the map $g$ is uniquely determined.
\end{lemma}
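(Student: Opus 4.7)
My plan is to prove the equivalence by handling the two implications separately and then deducing uniqueness. Throughout, the key technical move is translating the pointwise infinitesimal hypothesis at the identity into a global statement about power series coefficients via the homomorphism property of $f$.

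For the implication $(2) \Rightarrow (1)$, I would argue in coordinates. After choosing local coordinates, the relative Frobenius $\phi_G : G \to G^{(p)}$ is given componentwise by $t \mapsto t^p$, and $d(t^p) = p t^{p-1}\, dt = 0$ in characteristic $p$. Consequently, the induced map on invariant differentials at the identity is zero, so any composition $g \circ \phi_G$ has vanishing cotangent map at the identity.

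For the implication $(1) \Rightarrow (2)$, work Zariski locally so that $G$ and $G'$ admit coordinates and $f$ is represented by a power series $f(t) \in R[[t]]$ (or a tuple of power series in the higher-dimensional case) with $f(0) = 0$. The hypothesis is $f'(0) = 0$. The heart of the argument is to promote this into the identity $f'(t) \equiv 0$. For this I would differentiate the homomorphism relation $f(x +_G y) = f(x) +_{G'} f(y)$ with respect to $x$ and evaluate at $x = 0$, obtaining
\[
f'(y) \cdot \partial_1 F_G(0, y) \;=\; \partial_1 F_{G'}(0, f(y)) \cdot f'(0) \;=\; 0,
\]
and since $\partial_1 F_G(0, y) \in R[[y]]^\times$ (its constant term equals $1$ because $F_G(x,y) = x + y + \cdots$), we conclude $f'(y) = 0$ identically. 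In characteristic $p$, the kernel of $d/dy$ on $R[[y]]$ consists precisely of power series in $y^p$, so $f(y) = g(y^p)$ for a unique $g \in R[[u]]$. This $g$ is the local formula for the desired map $G^{(p)} \to G'$; compatibility with the group laws is inherited from that of $f$ because $\phi_G$ is an epimorphism of formal groups, and the local formulas glue to a globally defined morphism because of the stated uniqueness.

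Uniqueness of $g$ is immediate in both formulations: the coefficients of $g$ are read off from $f(y) = \sum a_{pn} y^{pn}$ as $g(u) = \sum a_{pn} u^n$, and invariantly, if $g_1 \circ \phi_G = g_2 \circ \phi_G$ then $g_1 = g_2$ since $\phi_G$ is an epimorphism. The step I expect to be the main obstacle is promoting the pointwise hypothesis $f'(0) = 0$ to the global vanishing $f'(y) \equiv 0$; this is where the homomorphism property of $f$ enters essentially and distinguishes the statement from the analogous (false) statement for arbitrary pointed maps. Everything afterwards is bookkeeping in characteristic $p$ power series algebra and assembling local data into a morphism of formal groups.
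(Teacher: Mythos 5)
The paper does not actually prove this lemma; it is imported verbatim from Lurie's \emph{Elliptic Cohomology II} (Prop.\ 4.4.5), so there is no in-paper argument to compare against. Your proof is correct and is the standard one: $(2)\Rightarrow(1)$ because $d(t^p)=0$ in characteristic $p$; $(1)\Rightarrow(2)$ by differentiating the homomorphism identity $f(F_G(x,y))=F_{G'}(f(x),f(y))$ in $x$ at $x=0$ and using that $\partial_1 F_G(0,y)$ is a unit to promote $f'(0)=0$ to $f'\equiv 0$, whence $f(t)=g(t^p)$ over an $\F_p$-algebra; uniqueness and the homomorphism property of $g$ follow because $\phi_G$ is an epimorphism (indeed $\phi_G^*$ is injective, $R[[t]]$ being free over $R[[t^p]]$). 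You correctly identify the key step — that the infinitesimal hypothesis at the origin globalizes only because $f$ is a homomorphism. One cosmetic remark: the lemma as printed has the pullback arrow backwards (for $f\colon G\to G'$ the cotangent map goes $\omega_{G'}\to\omega_G$); you silently and correctly read the condition as vanishing of the induced map on invariant differentials, i.e.\ $f'(0)=0$, which is what is intended.
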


%\bonus{ define height this way and then state that it must be of height exactly $h$ or infinite...}

\begin{defn}
A formal group $F$ over an $\F_p$-algebra is of height at least $h$ if the multiplication by $p$ map factors through the $h$-th relative Frobenius, as in 

\[\begin{tikzcd}
	F & {F^{(p^h)}} \\
	& F
	\arrow["{\varphi^h_{F/R}}", from=1-1, to=1-2]
	\arrow["{[p]_F}"', from=1-1, to=2-2]
	\arrow[dashed, from=1-2, to=2-2]
\end{tikzcd}\]

\noindent A formal group $F$ is of height exactly $h$ if the map factoring Frobenius is an isomorphism.

\[\begin{tikzcd}
	F & {F^{(p^h)}} \\
	& F
	\arrow["{{\varphi^h_{F/R}}}", from=1-1, to=1-2]
	\arrow["{[p]_F}"', from=1-1, to=2-2]
	\arrow["\simeq", from=1-2, to=2-2]
\end{tikzcd}\] 
\end{defn}

\begin{remark} An equivalent definition of the \textbf{height} of a (one-dimensional) formal group $F$ over characteristic $p$ field $k$ as the rank of connected component of the kernel of the multiplication by $p$ map as a $k$-vector space, i.e., 
$\text{height}(F) := \text{rank}_kF^\circ[p].$
\end{remark}

%\bonus{remark on what height is measuring and characteristic (Remk 13.12} \bonus{example using endomorphism algebra of Oort}

\subsection{Isomorphism Scheme is Ind-Etale and Height Classifies}
\begin{defn}% A $k$-algebra is ind-\'etale if as a $k$-algebra, it is isomorphic to a filtered colimit of \'etale algebras.
An ind-\'etale cover over $R$ is a filtered colimit of finite etale extensions over $\Spec R.$ An equivalent definition is that a cover $\Spec S \to \Spec R$ is ind-etale if for every diagram there is a unique lift of $q$
\[\begin{tikzcd}
	{\Spec k} & {\Spec S} \\
	{\Spec A} & {\Spec R}
	\arrow[from=1-1, to=1-2]
	\arrow[from=1-1, to=2-1]
	\arrow["\begin{array}{c} \text{ind-}\\ \text{etale} \end{array}", from=1-2, to=2-2]
	\arrow["{{\exists !}}"{description}, dashed, from=2-1, to=1-2]
	\arrow["q", from=2-1, to=2-2]
\end{tikzcd}\]
where $\Spec k \to \Spec A$ is a nil-thickening.
\end{defn}

\begin{remark} A cover is \'etale if it has this property and is also finite. \end{remark}

%\ti{introduce height for higher heights through dieudonne slopes?}%add example of weil conjecture telling us elliptic curves are tori in every height 

%preferred descent

\begin{defn} Given $G_0 \to \Spec(R_0)$ and $G_1 \to \Spec(R_1)$ are formal groups, we have an isomorphism scheme which fits into a pullback diagram 
% https://q.uiver.app/#q=WzAsNCxbMCwwLCJcXG1hdGhybXtJc299KEdfMCwgR18xKSJdLFsxLDAsIlxcU3BlYyhSXzApIl0sWzAsMSwiXFxTcGVjKFJfMSkiXSxbMSwxLCJNX3tcXHRleHR7Zmd9fSJdLFswLDFdLFswLDJdLFswLDMsIiIsMCx7InN0eWxlIjp7Im5hbWUiOiJjb3JuZXIiLCJib2R5Ijp7Im5hbWUiOiJub25lIn0sImhlYWQiOnsibmFtZSI6Im5vbmUifX19XSxbMSwzXSxbMiwzXV0=
\[\begin{tikzcd}
	{\mathrm{Iso}(G_0, G_1)} & {\Spec(R_0)} \\
	{\Spec(R_1)} & {\mc{M}_{\text{fg}}}
	\arrow[from=1-1, to=1-2]
	\arrow[from=1-1, to=2-1]
	\arrow["\lrcorner"{anchor=center, pos=0.125}, draw=none, from=1-1, to=2-2]
	\arrow[from=1-2, to=2-2]
	\arrow[from=2-1, to=2-2]
\end{tikzcd}\]

The $S$-points of $\mathrm{Iso}(G_0, G_1)$ are given by triples consisting of maps $f_i: R_i \to S$ together with an isomorphism $f_0^*G_0 \simeq f_1^*G_1$ of formal groups. 
\end{defn}

If the formal groups come from formal group laws $F_0, F_1$, the resulting scheme is affine, $\mathrm{Iso}(G_0, G_1) \simeq \Spec(A_{F_0, F_1}).$ This is the $R_0 \otimes_\Z R_1$ algebra generated by symbols $b_i$ for $i \geq 0$ subject to the relations which state that the power series $\phi(x) = \sum_i b_ix^{i+1}$ is an isomorphism from $F_0$ to $F_1$.  We introduce the notation of $A_{F_0, F_1}(m)$ to mean the $R_0 \otimes R_1$-subalgebra of $A_{F_0, F_1}$ generated by $b_i$ for $i < m.$

\begin{lemma} \label{indetale} \cite{finiteheight} (15.2) Let $F_0$ and $F_1$ be formal groups of dimension 1 which are both of height $h > 0$ , then 
\begin{enumerate} 
\item $A_{F_0, F_1}(0) \simeq R_0 \otimes_{\Z} R_1$
\item each of the maps $A_{F_0, F_1}(m) \hookrightarrow A_{F_0, F_1}(m+1)$ is finite etale.
\end{enumerate} 
In particular $A_{F_0, F_1}$ is ind-etale over $R_0 \otimes R_1$.
\end{lemma}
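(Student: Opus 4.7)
The plan is to handle parts (1) and (2) separately, and then deduce the ind-etale conclusion as a filtered colimit.

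For (1), this is essentially immediate from the definition: since $A_{F_0, F_1}(m)$ is by construction the $R_0 \otimes R_1$-subalgebra of $A_{F_0,F_1}$ generated by the $b_i$ with $i < m$, the subalgebra $A_{F_0, F_1}(0)$ is generated by no $b_i$ at all, and hence equals the base ring $R_0 \otimes_{\Z} R_1$. No analysis of the functional equation is needed at this stage.

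For (2), the key step is to extract from the functional equation $\phi(F_0(x,y)) = F_1(\phi(x),\phi(y))$, where $\phi(x) = \sum_{i \geq 0} b_i x^{i+1}$, the precise relation that constrains $b_m$ over $A_{F_0,F_1}(m)$. Expanding both sides in powers of $x$ and $y$ and comparing coefficients yields a countable family of polynomial relations $P_{i,j}$ among the $b_n$. First I would isolate, for each $m$, a single relation of the form $Q_m(b_m) = 0$ with $Q_m \in A_{F_0,F_1}(m)[T]$, presenting $A_{F_0,F_1}(m+1)$ as the quotient $A_{F_0,F_1}(m)[b_m]/(Q_m)$. The remaining coefficient relations that happen to involve $b_m$ (but no higher $b_n$) then need to be shown redundant modulo $Q_m$ together with the relations already imposed on $A_{F_0,F_1}(m)$; this will follow from the recursive shape of the coefficient expansions, since relations at total degree $> m+1$ can only introduce higher $b_n$.

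The hard part will be showing that $Q_m$ makes $A_{F_0,F_1}(m) \hookrightarrow A_{F_0,F_1}(m+1)$ finite etale. Finiteness requires $Q_m$ to be monic in a suitable presentation, and etaleness requires $\partial Q_m / \partial T$ to be a unit in $A_{F_0,F_1}(m+1)$. This is precisely where the height hypothesis $h > 0$ enters. Reducing modulo $p$, the multiplication-by-$p$ map on each $F_i$ has the form $[p]_{F_i}(t) = u_i t^{p^h} + O(t^{p^h+1})$ with $u_i$ a unit, and the derived equation $\phi \circ [p]_{F_0} = [p]_{F_1} \circ \phi$ then forces a controlled polynomial equation on the Taylor coefficients of $\phi$ whose derivative with respect to $b_m$ is invertible precisely because $[p]$ begins in degree $p^h$. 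Combining this with the invertibility of $b_0$ (forced by $\phi$ being an isomorphism) and patching across the characteristic-zero and characteristic-$p$ parts via faithfully flat descent then yields separability and monicity over all of $R_0 \otimes R_1$.

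Finally, the ``in particular'' clause follows at once: $A_{F_0,F_1} = \colim_m A_{F_0,F_1}(m)$ is a filtered colimit of finite etale extensions of $R_0 \otimes_\Z R_1$, and hence is ind-etale by definition.
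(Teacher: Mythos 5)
The paper gives no proof of this lemma; it is quoted from \cite{finiteheight} (15.2), so there is no in-text argument to compare against. Judged on its own, your sketch has the right raw ingredients --- the Artin--Schreier-type equation extracted from $\phi\circ[p]_{F_0}=[p]_{F_1}\circ\phi$, with the unit derivative coming from exact height $h$ --- but the step you flag as ``the remaining coefficient relations \ldots then need to be shown redundant modulo $Q_m$'' is not a loose end: it is the theorem, and as stated your claim is false. When $m+1$ is \emph{not} a power of $p$, the coefficient of $x^iy^j$ ($i+j=m+1$, $i,j\geq 1$) in $\phi(F_0(x,y))=F_1(\phi(x),\phi(y))$ is linear in $b_m$ with coefficient $\binom{m+1}{i}$, and some such binomial coefficient is a unit mod $p$; this forces $b_m\in A_{F_0,F_1}(m)$, so the extension is trivial and is certainly not $A(m)[b_m]/(Q_m)$ for the degree-$p^h$ polynomial $Q_m$. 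Those linear relations are therefore not redundant modulo $Q_m$ --- they cut the would-be degree-$p^h$ extension down to degree $1$. The honest proof needs the dichotomy ``$m+1$ a power of $p$ or not,'' and in the $p$-power case (where all the $\binom{m+1}{i}$ vanish and the linear relations say nothing about $b_m$) one must still prove that the single $p$-series relation $u_1b_m^{p^h}-u_0^{m+1}b_m+c=0$ generates \emph{all} relations on $b_m$ over $A(m)$. Since $A(m+1)$ is defined as a subalgebra of $A_{F_0,F_1}$ rather than by generators and low-degree relations, this amounts to showing that the ring presented by that one equation actually corepresents extensions of $m$-bud isomorphisms to $(m+1)$-bud isomorphisms and that these extend further; the standard tool here is Lazard's symmetric $2$-cocycle lemma, which controls the set of extensions of a bud homomorphism by one degree. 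Your appeal to ``the recursive shape of the coefficient expansions'' does not substitute for this.

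Two smaller points. First, there is no ``characteristic-zero part'' to patch: a formal group of height $h>0$ lives over an $\F_p$-algebra (the paper's own definition of height presupposes this), so $R_0\otimes_{\Z}R_1$ is already an $\F_p$-algebra and the faithfully flat descent step you describe is vacuous --- its appearance suggests a misreading of where the argument lives. Second, even part (1) is not purely definitional as an \emph{isomorphism}: that the subalgebra generated by no $b_i$ equals $R_0\otimes_{\Z}R_1$ requires injectivity of $R_0\otimes_{\Z}R_1\to A_{F_0,F_1}$, which is usually obtained as a byproduct of the faithful flatness established in part (2), so the two parts are not as independent as your plan assumes.
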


\begin{lemma} \cite{finiteheight} (15.6)
Let $F_0, F_0', F_1$ be formal group laws over $R,$ then any choice of isomorphism $\phi(F_0, F')$ induces an isomorphism of $R$ algebras $A_{F_0, F_1} \simeq A_{F_0', F_1}$ compatible with the filtration. 
\end{lemma}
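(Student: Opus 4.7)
The plan is to promote the given isomorphism $\phi \colon F_0 \to F_0'$ to an isomorphism of representing objects via Yoneda, and then verify filtration compatibility by a direct inspection of power series coefficients.

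First, I would define a natural transformation of functors of points. For any $R$-algebra $S$, consider the map
\[
c_\phi \colon \mathrm{Iso}_S(F_0' \otimes_R S,\, F_1 \otimes_R S) \to \mathrm{Iso}_S(F_0 \otimes_R S,\, F_1 \otimes_R S), \qquad \psi \mapsto \psi \circ (\phi \otimes_R S).
\]
This is natural in $S$ and is a bijection with inverse given by precomposition with $\phi^{-1}$. Since $\mathrm{Iso}(F_0, F_1)$ and $\mathrm{Iso}(F_0', F_1)$ are represented by $A_{F_0, F_1}$ and $A_{F_0', F_1}$ respectively, Yoneda produces an isomorphism of $R$-algebras
\[
c_\phi^{\sharp} \colon A_{F_0, F_1} \xrightarrow{\sim} A_{F_0', F_1}.
\]

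Next I would check filtration compatibility. Write the universal isomorphisms as the power series $\Psi(x) = \sum_{i \geq 0} b_i x^{i+1}$ (for $F_0 \to F_1$, coefficients in $A_{F_0, F_1}$), $\Psi'(x) = \sum_{i \geq 0} b_i' x^{i+1}$ (for $F_0' \to F_1$, coefficients in $A_{F_0', F_1}$), and $\phi(x) = \sum_{i \geq 0} a_i x^{i+1}$ with $a_i \in R$ and $a_0 \in R^\times$. At the level of representing algebras, $c_\phi^\sharp$ sends $\Psi$ to $\Psi' \circ \phi$, so
\[
c_\phi^{\sharp}(b_n) \;=\; [x^{n+1}]\Bigl(\,\sum_{j \geq 0} b_j'\, \phi(x)^{j+1}\Bigr).
\]
Expanding the right side, each monomial $\phi(x)^{j+1}$ begins in degree $j+1$, so only terms with $j \leq n$ contribute to the coefficient of $x^{n+1}$, and each such term is a polynomial in $a_0, \ldots, a_n \in R$ and $b_0', \ldots, b_n'$. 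Hence $c_\phi^\sharp(b_n) \in A_{F_0', F_1}(n+1)$, giving $c_\phi^\sharp\bigl(A_{F_0, F_1}(m)\bigr) \subseteq A_{F_0', F_1}(m)$ for every $m$. Applying the same reasoning to $\phi^{-1}$ shows that $(c_\phi^\sharp)^{-1} = c_{\phi^{-1}}^\sharp$ also respects the filtration, which concludes the proof.

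The only mildly subtle step is the combinatorial one at the end: verifying that $\Psi' \circ \phi$, as a formal power series, has its $(n+1)$-st coefficient built only from the first $n+1$ coefficients of $\Psi'$ (and of $\phi$). This is essentially the observation that the composition of power series with vanishing constant term respects the degree filtration, so I do not expect serious difficulty; the rest is a direct Yoneda argument.
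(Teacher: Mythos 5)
The paper does not prove this lemma itself---it simply cites \cite{finiteheight} (15.6)---and your argument is correct and is exactly the standard one: precomposition with $\phi$ gives a natural isomorphism of $\mathrm{Iso}$-functors, Yoneda transports this to the representing algebras, and the coefficient count for $\Psi'\circ\phi$ (together with the same bound for $\phi^{-1}$) shows $c_\phi^\sharp\bigl(A_{F_0,F_1}(m)\bigr)=A_{F_0',F_1}(m)$. Nothing is missing; the only point worth stating explicitly is that the $a_i$ lie in $R$ and hence are scalars for the filtration, which you already note.
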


\begin{remark} This filtration by $m$ induces a topology on $A_{F_0, F_1}$, giving it the structure of a pro-finite group. \end{remark}

\begin{theorem}  \cite{finiteheight} (15.4)(Lazard)  Let $K$ be an algebraically closed field of characteristic $p$. Then any two formal groups $F_0, F_1$ of dimension 1 over $K$ of the same height are isomorphic. \end{theorem}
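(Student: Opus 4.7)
The plan is to leverage Lemma \ref{indetale} together with the algebraic closedness of $K$ to reduce the theorem to a concrete inductive construction of approximate isomorphisms, where algebraic closedness bails us out at every step.

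First, I would represent $F_0, F_1$ by formal group laws and observe that a $K$-isomorphism $F_0 \simeq F_1$ is precisely a $K$-point of $\Spec A_{F_0, F_1}$ lying above the diagonal character $\mu \colon K \otimes_\Z K \to K$, $a \otimes b \mapsto ab$. Setting $B := A_{F_0, F_1} \otimes_{K \otimes K, \mu} K$ and $B(m) := A_{F_0, F_1}(m) \otimes_{K \otimes K, \mu} K$, Lemma \ref{indetale} gives $B = \colim_m B(m)$ with each $B(m)$ finite \'etale over $K$. Since $K$ is algebraically closed, each $B(m)$ is a finite product of copies of $K$, so $\Spec B(m)$ is a finite set, the transition maps are surjections (when non-zero), and $\Spec B = \lim_m \Spec B(m)$ is non-empty iff each $\Spec B(m)$ is.

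Second, I would unwind non-emptiness of $\Spec B(m)$: it is the existence of a truncated power series $\phi_m(x) = b_0 x + b_1 x^2 + \cdots + b_{m-1} x^m$ with $b_0 \in K^\times$ such that
\[
\phi_m(F_0(x,y)) \equiv F_1(\phi_m(x), \phi_m(y)) \pmod{(x,y)^{m+1}}.
\]
I would construct such $\phi_m$ by induction on $m$. The base cases are trivial (pick any $b_0 \in K^\times$ to trivialize $\Lie$), and the inductive step amounts to solving a single polynomial equation in the new coefficient $b_m$, arising from the failure of $\phi_m$ to satisfy the congruence modulo $(x,y)^{m+2}$.

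Third, I would analyze the obstruction to the inductive step. At indices $m+1$ not a power of $p$, the obstruction is linear in $b_m$ with an invertible leading coefficient coming from the symmetric cocycle polynomial $(x+y)^{m+1} - x^{m+1} - y^{m+1}$, so it has a unique solution. At indices $m+1 = p^k$, the obstruction involves the $p$-series $[p]_{F_0}$ and $[p]_{F_1}$; here the shared height $h$ is crucial. For $k < h$ the obstruction becomes a separable polynomial equation in $b_m$ of positive degree, which is solvable because $K$ is algebraically closed; for $k \geq h$ the obstruction vanishes identically thanks to the height-$h$ condition on both $p$-series.

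The hard part: the substantive content is the obstruction analysis at the special indices $m+1 = p^k$. Establishing that the obstruction polynomial is genuinely solvable (rather than merely nontrivial) is where the height hypothesis and algebraic closedness interact most delicately --- one must track the leading behavior of both $p$-series to confirm the obstruction equation takes the promised separable-polynomial form. The ind-\'etale framework and the profinite compactness reduction of the first step are then a formal consequence of Lemma \ref{indetale} once this inductive non-emptiness is secured.
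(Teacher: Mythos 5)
The paper does not prove this statement: it is quoted from \cite{finiteheight} (Theorem 15.4), with Lemma \ref{indetale} (loc.\ cit.\ 15.2) as the intended engine. Your first two steps reproduce that route, and they already finish the proof: an injective finite \'etale ring map is faithfully flat (the idempotent cutting out the rank-zero locus lies in the kernel, hence vanishes), so each $\Spec B(m+1) \to \Spec B(m)$ is surjective, each $\Spec B(m)$ is non-empty by induction from $B(0)=K$, and a cofiltered limit of non-empty finite sets is non-empty. A point of the limit is the desired isomorphism. Given Lemma \ref{indetale}, nothing more is needed, so your inductive obstruction analysis is redundant.

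It is also incorrect at precisely the stages you flag as the hard part. Adjusting $\phi_m$ by $b_m x^{m+1}$ changes the degree-$(m+1)$ defect by $b_m\bigl((x+y)^{m+1}-x^{m+1}-y^{m+1}\bigr)$, and when $m+1=p^k$ this polynomial is identically zero in characteristic $p$: the new coefficient $b_{p^k-1}$ does not enter the degree-$p^k$ obstruction at all, so there is no polynomial equation in $b_m$ to solve at that stage. What actually happens is that the degree-$p^k$ obstruction is controlled by a coefficient chosen $h$ $p$-power stages earlier: compatibility with the $p$-series forces $b_0 v^{(0)}_h = b_0^{p^h}v^{(1)}_h$ (a separable equation of degree $p^h-1$ governing the degree-$p^h$ obstruction), and more generally $b_{p^j-1}$ is pinned down by a separable Artin--Schreier-type equation of degree $p^h$ coming from degree $p^{j+h}$; this is exactly the content of Lemma \ref{indetale} and is where algebraic closedness enters. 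Your dichotomy is moreover reversed: for two formal groups both of height exactly $h$, the obstructions at $p^k$ with $k<h$ vanish automatically (both $p$-series vanish to order $p^h$), while for $k\ge h$ they are genuinely nonzero and must be killed. A self-contained argument would have to restructure the induction so that the free parameter solving the stage-$p^k$ obstruction is the earlier coefficient $b_{p^{k-h}-1}$, with the intervening non-$p$-power coefficients re-determined linearly; as written, the induction stalls at every $p$-power stage.
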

%page 66 piotr's notes

%$\F_{p^h}.$ 

    %Do the proof for $\overline{\F}_p$ in Lurie's notes and observe it also goes through using only $\F_{p^n}$. \ti{fill in}

%\begin{proof} %We will use that maps between formal group laws are ind-finite-etale and then show that they split at $\F_{p^h}$. Let $F_0$ and $F_1$ be two formal group laws over $K := \F_{p^h}$. 

%Let us consider the base change $B := A_{F_0, F_1} \otimes_{K \otimes K} K$. Then $K$-algebra maps $f: B \to R$ are in one-to-one correspondence between isomorphisms $F_0 \simeq F_1$ over $R$. By Lemma \rep{indetale}, $A_{F_0, F_1}$ is ind-etale over $K \otimes K$ iff $F_0,F_1$ are the same height. Ind-etale morphisms are preserved under base-change, thus if we base change from $K \otimes K$ to $K$, $B$ is ind-etale over $K$. \rin{fix or cite}

%https://websites.umich.edu/~viktorb/etale_summer2020/Notes/etalemaps.pdf Reference for preserved under base change.

%In other words, $A_{F_0, F_1} \simeq \lim A_{F_0, F_1}(m)$ and $A_{F_0, F_1}(m) \hookrightarrow A_{F_0, F_1}(m+1)$ are finite etale extensions. Since $A_{F_0, F_1} \otimes_{K \otimes K} K$ Base changing, we get that $B$ is ind-etale over $K$. $B \simeq \lim B(m)$ is still ind-etale over $K$. 

 %compatible sequence of maps $B(m)$ to $k$.
%Since the latter is seperably closed

%Is $B(m)$ is isomorphic to a finite product of copies of $k$ even when $k$ isn't seperably closed?, which together assemble into a map $B \to k$. This gives the desired isomorphism between $F_0$ and $F_1$ over $K$. 
%\end{proof}

\begin{question} Are iso-schemes for formal group laws of dimension $n$ still ind-\'etale? \end{question}

%(I guess we need Prop 15.11)

%\begin{lemma} The stack $\mc{M}_{FG^1}$ is stratified by the stacks $M_{FG^1}^h$, and filtered by the stacks $\mc{M}_{FG^1}^{\leq h}$ (which are $\Def_F$). \end{lemma}

%Defn 12.5 formal groups with trivialized Lie algebra
%Note that a trivialization of the sheaf of invariant differentials is the same as a choice of a globally non-vanishing invariant differential.

%\ti{do these even have a bracket? do I instead want to work with the tangent space?, its not the identity on the tangent space or is it.}

%\begin{defn} A formal group $G$ over $\Spf R$ together with a trivialization of its Lie algebra $\phi: \mf{g} \simeq R^n$ is a \textbf{formal group law}. Specifying a trivialization of the Lie algebra is equivalent to specifying a coordinate. This is because a trivialization of the sheaf of invariant differentials (dual of the Lie algebra) is the same as a choice of a globally non-vanishing invariant differential. We can take general morphisms, or we can can ask for morphism to be identity on the Lie algebra. The latter corresponds to what is called ``strict morphisms." \end{defn}

%Piotr end of page 55 discusses introduction of graded element

%Morphisms in this category are strict morphisms. (is that actually true?)

%\ti{confused about if asking for a trivialization of the Lie algebra globally gives formal group laws, because I don't think they natively have only strict morphisms.}

\subsection{Automorphisms of a Formal Group}

In this section we establish that the group of interest to us is a constant profinite group scheme, this allows us to freely apply the machinery we developed in the stack section to our case.

\begin{defn} We consider the full subcategory $\Art_R$ of Artinian $R$-algebras in the category $\mr{CRing}_R$ of linearly topologized $R$-algebras. 
\end{defn} 

%\rin{establish notation $\Art_{R/}$ and $\Art_{k/}$}

\begin{defn} Given a formal group $F: \Art_{k} \to \mr{Grp}$, we consider \begin{align*} 
\underline{\Aut}(F): \Art_{R} & \to \mr{Grp} \\ 
R & \mapsto \Aut(F|_{\Art_{R}})
\end{align*}
\end{defn}

\begin{lemma} Given a $F$ a formal group law over $k$ and $\widetilde{F}$ a deformation in $\Art_{k}$, $\Aut(F) \simeq \Aut(\widetilde{F})$ uniquely. \end{lemma}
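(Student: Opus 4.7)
The plan is to interpret $\Aut(F)$ and $\Aut(\widetilde{F})$ as $k$- and $R$-points of a single automorphism group scheme, and then exploit ind-\'etaleness plus the fact that $R \to k$ is a nil-thickening (as $R$ is Artinian local). The claim really asserts that the reduction map $\Aut(\widetilde{F}) \to \Aut(F)$ is a canonical bijection, which is exactly the statement that automorphisms lift uniquely.

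First I would identify $\Aut(\widetilde{F}) = \underline{\Aut}(\widetilde{F})(R)$ and $\Aut(F) = \underline{\Aut}(\widetilde{F}|_k)(k)$, where $\underline{\Aut}(\widetilde{F}) = \mathrm{Iso}(\widetilde{F}, \widetilde{F})$ is the automorphism scheme sitting over $\Spec R$. Next I would invoke Lemma \ref{indetale} with $R_0 = R_1 = R$ and $F_0 = F_1 = \widetilde{F}$ to conclude that $\mathrm{Iso}(\widetilde{F}, \widetilde{F}) \to \Spec(R \otimes_{\Z} R)$ is ind-\'etale. Base-changing along the multiplication map $R \otimes_{\Z} R \to R$ (equivalently, pulling back along the diagonal $\Spec R \to \Spec R \times \Spec R$, which is precisely the condition that we identify source and target of the iso-scheme) shows that $\underline{\Aut}(\widetilde{F}) \to \Spec R$ is ind-\'etale as well, since ind-\'etaleness is stable under base change.

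Finally, since $R$ is Artinian with residue field $k$, the maximal ideal $\mathfrak{m} \subset R$ is nilpotent, so $\Spec k \hookrightarrow \Spec R$ is a nil-thickening. Applying the unique lifting property of ind-\'etale maps to the diagram
\[
\begin{tikzcd}
\Spec k \ar[r] \ar[d, hook] & \underline{\Aut}(\widetilde{F}) \ar[d, "\text{ind-\'et}"] \\
\Spec R \ar[ur, dashed, "\exists !"] \ar[r, "\mathrm{id}"] & \Spec R
\end{tikzcd}
\]
shows that every $R$-morphism $\Spec k \to \underline{\Aut}(\widetilde{F})$ lifts uniquely to an $R$-morphism $\Spec R \to \underline{\Aut}(\widetilde{F})$. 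In other words, every automorphism of $F$ lifts uniquely to an automorphism of $\widetilde{F}$, establishing the canonical isomorphism $\Aut(\widetilde{F}) \simeq \Aut(F)$.

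The main obstacle I anticipate is being careful about the base-change step: Lemma \ref{indetale} gives ind-\'etaleness of the iso-scheme over $R \otimes_{\Z} R$, and one must justify that restricting to the ``diagonal'' presentation (where source equals target as formal groups over $R$) preserves ind-\'etaleness over $R$. This is a routine stability-under-base-change argument, but it is the one place where a reader might want more detail.
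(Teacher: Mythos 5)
Your proposal is correct and follows essentially the same route as the paper: the paper's proof is precisely the observation that the automorphism scheme is ind-\'etale over $\Spec R$, so automorphisms lift uniquely along the nil-thickening $\Spec k \hookrightarrow \Spec R$. You simply spell out the intermediate base-change step (from the iso-scheme over $R \otimes_{\Z} R$ to the diagonal) that the paper leaves implicit.
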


\begin{proof} We have unique lifts because the iso group scheme is ind-etale. 
% https://q.uiver.app/#q=WzAsNCxbMCwwLCJcXFNwZWMgayJdLFswLDEsIlxcU3BlYyBSIl0sWzEsMCwiXFxBdXQoXFx3aWRldGlsZGV7Rn0pIl0sWzEsMSwiXFxTcGVjIFIiXSxbMSwzLCIiLDAseyJzdHlsZSI6eyJoZWFkIjp7Im5hbWUiOiJub25lIn19fV0sWzEsMywiIiwyLHsib2Zmc2V0IjotMSwic3R5bGUiOnsiaGVhZCI6eyJuYW1lIjoibm9uZSJ9fX1dLFswLDEsIiIsMCx7InN0eWxlIjp7InRhaWwiOnsibmFtZSI6Imhvb2siLCJzaWRlIjoidG9wIn19fV0sWzIsMywiXFxzdWJzdGFja3sgXFx0ZXh0e2luZH0tIFxcXFwgXFx0ZXh0e1xcJ2V0YWxlfX0iXSxbMCwyXSxbMSwyLCJcXGV4aXN0cyAhIiwxLHsic3R5bGUiOnsiYm9keSI6eyJuYW1lIjoiZGFzaGVkIn19fV1d
\[\begin{tikzcd}
	{\Spec k} & {\Aut(\widetilde{F})} \\
	{\Spec R} & {\Spec R}
	\arrow[from=1-1, to=1-2]
	\arrow[hook, from=1-1, to=2-1]
	\arrow["\begin{array}{c} \substack{ \text{ind}- \\ \text{\'etale}} \end{array}", from=1-2, to=2-2]
	\arrow["{\exists !}"{description}, dashed, from=2-1, to=1-2]
	\arrow[no head, from=2-1, to=2-2]
	\arrow[shift left, no head, from=2-1, to=2-2]
\end{tikzcd}\]\end{proof}

\begin{comment}
\begin{proof} Less general version: It suffices to show that any automorphism a lift $\widetilde{F} \in FGL^1(R)$ formal groups which reduces to the identity modulo the maximal ideal was already trivial. (This uses induction on the the iso scheme from the previous section). We consider $g \in A_{\widetilde{F}, \widetilde{F}} =: A'$ and prove by induction on $k$ that $g(x) = x \mod \mf^{k}_R$. 

The map $g$ is classified by a ring homomorphism $\phi: A' \to R$, while the identity automorphism is classified by $\phi_0: A' \to R$. Assume the composite maps agree $\phi, \phi': A' \to R \to R/\mf{m}^k_R$

Then, modulo $\mf{m}^{k+1}_R$ the difference $\phi - \phi'$ is a map $d: A' \to V$, where $V$ is a $k$-vector space $\mf{m}^{k}_R/\mf{m}^{k+1}_R$. The map $d$ is an $R$-linear derivation and factors as a composition

$A' \to A' \otimes_R k \xrightarrow{d'} V$, where $d'$ is a $k$-linear derivation. But $A' \otimes_R k$ is the ring classifying automorphisms of the formal group $F$ of height $h$, and is therefore etale over $k$. It follows that $d' = 0$, so $\phi = \phi' \mod \mf{m}^{k+1}_R$
\end{proof}

We will define a functor $\Aut(F)$ and then immediately show it's actually constant.

\end{comment}

\begin{cor} $\Aut(F)$ is a constant functor, $\Aut(F|_{\Art_{R}}) \simeq \Aut_k(F)$ thus, it's just a constant profinite group! \end{cor}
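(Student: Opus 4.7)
The plan is to apply the previous lemma pointwise and then observe that the resulting identifications assemble into a natural isomorphism from $\underline{\Aut}(F)$ to the constant functor with value $\Aut_k(F)$. First, for each $R \in \widehat{\Art}_k$ the restriction $F|_{\Art_R}$ is a deformation of $F$ to $R$, so the previous lemma supplies a canonical isomorphism $\Aut(F|_{\Art_R}) \simeq \Aut_k(F)$, realized concretely by reduction modulo the maximal ideal of $R$.

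Next I would verify naturality in $R$. Given a morphism $R \to R'$ in $\widehat{\Art}_k$, the induced map $\Aut(F|_{\Art_R}) \to \Aut(F|_{\Art_{R'}})$ commutes with the reduction-mod-maximal-ideal isomorphisms to $\Aut_k(F)$, because both composites factor through the residue field $k$. Consequently, after identifying both sides with $\Aut_k(F)$, the map induced by $R \to R'$ is the identity, which is exactly the assertion that $\underline{\Aut}(F)$ is a constant functor.

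For the profinite structure, I would invoke Lemma \ref{indetale}: the automorphism algebra $A_{F,F}$ is the filtered colimit of the finite étale $k$-subalgebras $A_{F,F}(m)$, so $\mathrm{Iso}(F,F) = \Spec A_{F,F}$ is a cofiltered limit of finite étale group schemes over $k$. The associated group of $\overline{k}$-points (equivalently, the associated constant profinite group scheme) is therefore a profinite group, and combined with the constancy of $\underline{\Aut}(F)$ established above, this promotes $\underline{\Aut}(F)$ to a constant profinite group scheme with value $\Aut_k(F)$.

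The main obstacle has been discharged in the previous lemma through the unique-lifting property afforded by ind-étaleness. The present corollary is largely a bookkeeping statement: it repackages the pointwise lemma as a statement about the whole functor and records that the resulting group carries the profinite topology inherited from the filtration $\{A_{F,F}(m)\}_m$.
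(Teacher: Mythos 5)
Your proposal is correct and matches the paper's intent: the paper gives no separate proof for this corollary, treating it as an immediate consequence of the preceding lemma (whose unique-lifting argument via ind-\'etaleness you correctly identify as carrying all the weight), and the profinite structure you extract from the filtration $\{A_{F,F}(m)\}_m$ is exactly what the paper records in its remark following Lemma \ref{indetale}. Your explicit verification of naturality in $R$ is a reasonable elaboration of the word ``uniquely'' in the lemma's statement rather than a new idea.
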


\begin{cor} Given a formal group law $F$ of dimension one, $\Def_F^\star \simeq \pi_0(\Def_F^\star)$. That is, $\Def_F^\star$ is discrete. \end{cor}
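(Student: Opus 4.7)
The plan is to unpack the definition of discreteness and reduce everything to the immediately preceding corollary. A groupoid-valued functor $\Def_F^\star$ is equivalent to the set-valued functor $\pi_0 \Def_F^\star$ if and only if, for each $R \in \widehat{\mathrm{Art}}_k$, every automorphism group of every object of $\Def_F^\star(R)$ is trivial. So I aim to show: for any deformation $(\widetilde{F}, \iota)$ of $F$ over $R$, the only automorphism $\phi \colon \widetilde{F} \to \widetilde{F}$ satisfying $\iota \circ \phi|_k = \iota$ (equivalently, $\phi|_k = \mathrm{id}_F$) is $\phi = \mathrm{id}_{\widetilde{F}}$.

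The next step is to apply the preceding corollary directly. It tells us that restriction to the special fiber induces a bijection $\Aut(\widetilde{F}) \xrightarrow{\sim} \Aut_k(F)$, because the automorphism functor is the constant functor with value $\Aut_k(F)$. Since the identity $\mathrm{id}_{\widetilde{F}}$ obviously restricts to $\mathrm{id}_F$, any other $\phi$ that also restricts to $\mathrm{id}_F$ must equal $\mathrm{id}_{\widetilde{F}}$ by injectivity. This proves that each automorphism group in $\Def_F^\star(R)$ is trivial, hence the groupoid is equivalent to its set of isomorphism classes, yielding $\Def_F^\star(R) \simeq \pi_0(\Def_F^\star(R))$ naturally in $R$.

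There is essentially no technical obstacle, since the entire content has already been absorbed into Lemma \ref{indetale} (ind-\'etaleness of the iso-scheme) and its corollary (constancy of the automorphism functor). If I had to flag one subtlety, it is that one must make sure the constancy of the automorphism functor genuinely delivers the ``unique lift of the identity'' statement, and not merely an abstract group isomorphism that could in principle fail to be the restriction-to-$k$ map. That naturality is precisely what the unique-lifting property of ind-\'etale morphisms along the nil-thickening $\Spec k \hookrightarrow \Spec R$ guarantees, and I would be careful to make it explicit when writing out the proof.
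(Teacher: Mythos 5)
Your proof is correct and is exactly the argument the paper intends: the corollary is stated without proof immediately after the lemma that restriction along the nil-thickening $\Spec k \hookrightarrow \Spec R$ gives a bijection $\Aut(\widetilde{F}) \simeq \Aut_k(F)$ (via ind-\'etaleness of the iso-scheme), and triviality of the star-automorphism groups follows by injectivity just as you say. Your flagged subtlety --- that the isomorphism must be the restriction map itself, not an abstract one --- is well taken and is indeed supplied by the unique-lifting property.
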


\begin{cor} $\Aut(F)$ acts on $\Def_F^{\star}$. \end{cor}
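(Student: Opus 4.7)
The plan is to invoke the group action defined for $G \subseteq \Aut(X)$ in the definition following Lemma~\ref{skydive}, specialized to $X = F$ and $G = \Aut_k(F)$. Concretely, on objects we declare
\[
g \cdot (\mf{X},\, \iota : \mf{X}|_k \xrightarrow{\sim} F)\; :=\; (\mf{X},\, g \circ \iota),
\]
and on morphisms we take the identity, after the residue-field compatibility square is redrawn with $g \circ \iota$ in place of $\iota$. First I would verify the group-action axioms: the identity of $\Aut_k(F)$ clearly acts trivially, and $(gh) \cdot (\mf{X},\iota) = g \cdot (h \cdot (\mf{X},\iota))$ follows from associativity of composition of morphisms of formal groups over $k$.

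Since $\Aut_k(F)$ is profinite, the only nontrivial point is that this pointwise recipe assembles into an action on the stack $\Def_F^\star$, i.e.\ a map of stacks $\Aut_k(F) \times \Def_F^\star \to \Def_F^\star$ over the profinite group scheme $\Aut_k(F)$. Here the two preceding corollaries do almost all of the work: $\Aut_k(F)$ is a constant profinite group scheme, and $\Def_F^\star$ is discrete (the set $\pi_0(\Def_F^\star(R))$ for Artinian $R$). It therefore suffices to check that for each $R \in \widehat{\Art}_k$ and each class $[(\mf{X}, \iota)] \in \pi_0(\Def_F^\star(R))$ the stabilizer in $\Aut_k(F)$ is open. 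I expect this to be the main obstacle, and it is handled by the ind-\'etale structure established in Lemma~\ref{indetale}: the isomorphism scheme $\mathrm{Iso}(F, F)$ is a filtered colimit of finite \'etale covers, so testing whether $g \circ \iota$ and $\iota$ differ by a star-isomorphism of $\mf{X}$ involves only finitely many of the generating symbols $b_i$, each of which is determined by the image of $g$ in a finite quotient of $\Aut_k(F)$. Hence the stabilizer contains an open normal subgroup and the action is continuous.

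Finally, I would remark that this action is the one that all subsequent constructions in the paper implicitly refer to as the \emph{Lubin--Tate action}, and that combining the corollary with Lemma~\ref{skydive} immediately yields $\Def_F^{\Aut_k(F)} \simeq \Def_F^\star / \Aut_k(F)$, fitting this action into the stack-theoretic framework of Section~\ref{continuouscohom}.
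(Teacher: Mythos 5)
Your proposal matches the paper's approach: the paper states this corollary without proof, treating it as an immediate consequence of the earlier definition of the $G \subseteq \Aut(X)$ action on $\Def_X^\star$ (post-composition $\iota \mapsto g \circ \iota$) once the two preceding corollaries establish that $\Aut(F)$ is a constant profinite group scheme and $\Def_F^\star$ is discrete, so that the action ``type checks'' (as the remark following the corollary explains). Your additional verification of continuity via openness of stabilizers and Lemma~\ref{indetale} goes beyond what the paper records, but is consistent with its framework and uses only results already established there.
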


\begin{remark} We needed to show the functor was constant in order to establish that $\Aut(F)$ as a functor type checks with the objects in $\Def_F^\star$ as defined. Even if $\Aut(F)$ \text{was} a non constant functor, we \textit{can} define its action on parameterized version of $\Def_F$ that varies to other rings with a map from $k$. Fortunately, we don't need to do this.\end{remark}

%https://www.kurims.kyoto-u.ac.jp/~piotr/252y_notes.pdf Prop 18.15 does dimension of tangent space
%\on{could do 18.5 + 18.7 here}
%We just be applying Schlessinger here.
\subsection{Representability of Deformations of Formal Groups}
\label{representability}
Given a characteristic $p$ field $k$, we consider the deformations of a $k$-point in the prestack of ungraded formal groups of dimension one $\mc{M}_{\mr{fg}_1}$ (ungraded case), and the prestack of formal groups with $\mc{M}_{\mr{fg}_1}^{s}$ (graded case). This section establishes the co-representability of the deformation moduli problems of graded and ungraded formal groups $\Def_F^\star$ and $\Def_{F'}^\star$ in the sense of Definition \ref{deform}. Let $W := W(k)$ denote the Witt vectors of $k$.

\begin{lemma} \label{reppingbaby} $\text{ }$
\begin{itemize} 
\item Given $F \in \mc{M}_{\mr{fg}_1}^{=h}(k)$, then $\Def_{F}^\star$ is represented by a topological ring $A$ which is noncanonically isomorphic to $W[[u_1, ..., u_{h-1}]]$. In other words, there's an isomorphism of groupoids $$\Hom_{/k}(A, R) \simeq \Def_{F}^\star(R).$$
\item Given $F' \in \mc{M}_{\mr{fg}_1}^{=h,s}$, $\Def_{F'}^\star$ is represented by a topological ring $B$ which is noncanonically isomorphic to $W[[u_1, ..., u_{h-1}]][\beta^{\pm 1}]$. In other words, there's an isomorphism of groupoids
$$\Hom_{/k}(B, R) \simeq \Def_{F'}^\star(R).$$
\end{itemize} 
\end{lemma}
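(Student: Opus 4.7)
The plan is to establish prorepresentability via Schlessinger's criteria, identify the tangent space, verify formal smoothness, and then upgrade from the ungraded to the graded case using the $\G_m$-torsor structure already introduced. Since by the previous corollary $\Def_F^\star$ is discrete, I can work with it as a set-valued functor on $\widehat{\Art}_k$.

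First I would verify Schlessinger's conditions (H1)--(H4) for the functor $\Def_F^\star \colon \widehat{\Art}_k \to \mathrm{Set}$. The Mayer--Vietoris-type conditions (H1) and (H2) follow from the sheaf property of formal groups with respect to fiber products of Artin local rings, and from the fact that a deformation is determined by its formal group law coefficients plus the rigidifying star-isomorphism. Condition (H3) on finite-dimensionality of the tangent space reduces to a calculation: deformations to $k[\epsilon]/\epsilon^2$ are parametrized by symmetric Lazard 2-cocycles $g(x,y)$ modulo coboundaries, and the height-$h$ condition $[p]_F(x) = u\, x^{p^h} + \cdots$ with $u \in k^\times$ forces exactly $h-1$ independent parameters (one for each coefficient $u_i$ of $x^{p^i}$ with $1 \leq i \leq h-1$ in the deformed $p$-series), so $t_{\Def_F^\star} \cong k^{h-1}$. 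Condition (H4) is automatic since $\Def_F^\star$ is set-valued.

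Next I would establish formal smoothness. There are two natural routes: either (i) directly observe that any lift of $F$ across a small extension $R' \twoheadrightarrow R$ with square-zero kernel can be constructed by lifting the coefficients of the formal group law arbitrarily and then adjusting the star-isomorphism, which always succeeds because the obstruction space is zero for one-dimensional formal groups; or (ii) appeal to Lemma \ref{indetale}, which says the isomorphism scheme is ind-\'etale, and hence deformations lift uniquely after fixing a trivialization on the special fiber. Combining prorepresentability, the $(h-1)$-dimensional tangent space, and smoothness, the universal deformation ring $A$ is a regular complete local ring of relative dimension $h-1$ over its ring of Witt-type constants. Since $\widehat{\Art}_k$ consists of complete local Noetherian rings with residue field $k$, the Cohen structure theorem identifies this base as $W := W(k)$, giving the noncanonical isomorphism $A \cong W[[u_1,\ldots,u_{h-1}]]$.

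For the graded case, I would leverage the $\G_m$-bundle $\mc{M}_{\mr{fg}_1}^{\mr{Lie}\simeq\mr{triv}} \to \mc{M}_{\mr{fg}_1}$ discussed earlier. The functor $\Def_{F'}^\star$ is the pullback of this bundle along $\Def_F^\star \to \mc{M}_{\mr{fg}_1}$, i.e.\ it is the total space of the $\G_m$-torsor over $\Def_F^\star$ parametrizing trivializations of the dualizing line $\omega_{F'}$. Representing this torsor amounts to adjoining an invertible generator $\beta$ (noncanonically, a choice of globally non-vanishing invariant differential), yielding
$$B \;\cong\; A[\beta^{\pm 1}] \;\cong\; W[[u_1,\ldots,u_{h-1}]][\beta^{\pm 1}].$$
The main obstacle I anticipate is the rigorous verification of formal smoothness: the obstruction calculation is classical but requires either an explicit construction of the universal deformation in the style of Lubin--Tate's original paper, or a careful Harrison/Andr\'e--Quillen-style cohomological argument to show that the relevant $H^2$ vanishes for one-dimensional formal groups of finite height. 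Everything else (tangent space dimension, the role of $W(k)$, and the $\G_m$-twist for the graded case) is essentially bookkeeping once smoothness is in hand.
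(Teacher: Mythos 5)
Your outline is the classical Lubin--Tate/Schlessinger argument, and it is essentially sound; the paper, by contrast, does not prove Lemma \ref{reppingbaby} at all but defers to the literature: immediately after the statement it constructs the Hasse invariant $v_h \in \omega_F^{\otimes(p^h-1)}$ and quotes \cite{ell2} (Cor.\ 5.4.2), which produces the noncanonical isomorphisms $\pi_0 \simeq W(k)[[u_1,\ldots,u_{h-1}]]$ and $\pi_* \simeq W(k)[[u_1,\ldots,u_{h-1}]][\beta^{\pm}]$ by setting $u_m = \overline{v}_m/\beta^{p^m-1}$. So where the paper's route packages the coordinates $u_i$ as normalized Hasse invariants and inherits representability from Lurie's treatment, yours rebuilds it from scratch: prorepresentability via (H1)--(H4), the $(h-1)$-dimensional tangent space, unobstructedness, and the Cohen structure theorem, then the graded case via the $\G_m$-torsor of trivializations of $\omega_{F'}$. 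That last step is exactly consonant with the paper's own discussion of $\mc{M}_{\mr{fg}_1}^{\mr{Lie}\simeq\mr{triv}} \to \mc{M}_{\mr{fg}_1}$, and your identification $B \cong A[\beta^{\pm 1}]$ is correct because the torsor is trivial over the local base. Your approach buys a self-contained, elementary proof; the paper's buys compatibility with the spectral (Morava $E$-theory) refinement it wants later.

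Two soft spots worth tightening. First, (H4) is \emph{not} automatic for a set-valued functor: it is the gluing condition over $A'\times_A A'$ that separates prorepresentable functors from those merely admitting a hull, and here it holds because star-deformations have no nontrivial automorphisms --- which in this paper is exactly the content of the ind-\'etaleness of the isomorphism scheme (Lemma \ref{indetale} and the unique-lifting corollary for $\Aut(\widetilde F)$); you should cite that rather than wave at discreteness. Second, the tangent-space count ``$h-1$ parameters from the $p$-series'' is the right answer but conflates the conclusion of the Lubin--Tate computation with its proof; the honest argument identifies $\Def_F^\star(k[\epsilon])$ with symmetric $2$-cocycles modulo coboundaries and uses Lazard's classification of these to extract the basis indexed by $x^{p^i}$, $1\le i\le h-1$. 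You correctly flag that the vanishing of obstructions is the real analytic content; with that supplied (Lubin--Tate's explicit construction or the cocycle $H^2$ computation), the proof goes through.
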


\begin{construction}
Let $G$ be a formal group with height $\leq h$, then $[p]$ factors as a composition 
$G \xrightarrow{\phi_G} G^{(p^h)} \xrightarrow{T} G$. Since $T$ is uniquely determined, it therefore induces a pullback map $$T^* \colon \omega_G \to \omega_{G^{(p^h)}} \simeq \omega^{\otimes p^h}_G$$ \noindent which we can identify with an element $v_h \in \omega_{G}^{\otimes (p^h-1)}.$ This is often called the Hasse invariant.
\end{construction}

We now remark on the special case of Morava $E$-theory, and encourage the reader to take a look Example 5.3.7 and Section 3.3 of \cite{ell2} for context and revelation.

\begin{lemma} \cite{ell2} (Cor 5.4.2)  Suppose that there exists an element $\beta \in \pi_2(E)$ which is invertible in $\pi_*(E)$. Pick elements $\overline{v}_m \in \pi_{2(p^m-1)}(E)$ representing the Hasse invariants $v_m \in \pi_{2(p^m-1)}(R)/I_m$ and set $u_m = \overline{v}_m/\beta^{p^m-1} \in \pi_0(R)$. Then we have noncanonical isomorphisms  
$$\pi_0(R) \simeq W(k)[[u_1, ..., u_{h-1}]] \qquad \pi_*(R) \simeq W(k)[[u_1, ..., u_{h-1}]][\beta^{\pm}].$$ \end{lemma}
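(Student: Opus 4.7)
The plan is to bootstrap the statement from Lemma~\ref{reppingbaby}, which already furnishes the abstract isomorphisms $\pi_0(R) \simeq W(k)[[u_1,\ldots,u_{h-1}]]$ and $\pi_*(R) \simeq W(k)[[u_1,\ldots,u_{h-1}]][\beta^{\pm}]$ as complete local (graded) rings. What must be added is that one can take the coordinates $u_m$ to be the specific elements $\bar v_m / \beta^{p^m-1}$ built from the Hasse invariants. Since $\pi_0(R)$ is a complete regular local Noetherian ring of Krull dimension $h$ with residue field $k$ containing $W(k)$, the structure theorem for complete regular local rings reduces the problem to checking that $p, u_1, \ldots, u_{h-1}$ is a regular system of parameters, i.e.\ that these elements generate the maximal ideal and form a regular sequence.

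My first step would be to justify the passage from the graded to the ungraded picture via $\beta$. The hypothesis that $\beta \in \pi_2(R)$ is invertible means, under the usual sign conventions relating $\pi_2(E)$ to the cotangent line $\omega_G$ of the formal group $G$ associated to $E$, that $\omega_G$ (hence every $\omega_G^{\otimes n}$) acquires a canonical trivialization. The Hasse invariant $v_m$ is, by its very construction above, a section of $\omega_G^{\otimes(p^m-1)}$ coming from the relative Frobenius factorization of $[p]$, so dividing by $\beta^{p^m-1}$ converts it into an element $u_m \in \pi_0(R)$ that is well-defined modulo the ideal $I_m = (p, u_1,\ldots,u_{m-1})$. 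The graded conclusion $\pi_*(R) \simeq \pi_0(R)[\beta^{\pm}]$ is then immediate from invertibility and evenness of $R$.

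The core geometric input is the following: in the universal deformation $\Spec \pi_0(R)$, the closed subscheme cut out by $v_1, \ldots, v_m$ modulo $p$ is exactly the locus of formal groups of height $> m$. Since the generic fiber of the deformation has height $0$ and the closed point has height $h$, the successive vanishing loci give a chain of length $h$, so $(p, v_1, \ldots, v_{h-1})$ generates the maximal ideal and forms a regular sequence. This is the step that carries the real content, relying on the height stratification of $\mc{M}_{\mr{fg}_1}$ and the fact that on $V(p, v_1, \ldots, v_{m-1})$ the invariant $v_m$ becomes the leading obstruction to dropping height. After trivializing each $\omega_G^{\otimes(p^m-1)}$ by $\beta^{p^m-1}$ this translates into the statement that $(p, u_1, \ldots, u_{h-1})$ is a regular system of parameters. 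Applying Cohen's structure theorem then produces the asserted isomorphism $\pi_0(R) \simeq W(k)[[u_1, \ldots, u_{h-1}]]$.

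The main obstacle is this height-stratification step: showing that modulo $I_m$ the Hasse invariant $v_m$ is exactly the right obstruction class, so that $(v_1, \ldots, v_{h-1})$ is a regular sequence modulo $p$ and not merely a collection of parameters adapted to one stratum. Everything else is formal manipulation of the grading via $\beta$ and the invocation of Lemma~\ref{reppingbaby} and Cohen structure; the conceptual work lies entirely in identifying the Hasse invariants as the natural coordinates on the deformation space by matching them to the stratification by height.
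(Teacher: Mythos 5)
The paper offers no proof of this lemma at all: it is quoted verbatim from Lurie's \emph{Elliptic Cohomology II} (Cor.\ 5.4.2) as an imported fact, so there is no internal argument to compare against. Judged on its own terms, your reconstruction follows the same general strategy as the cited source: reduce to the ungraded statement about $\pi_0$ by using the invertible class $\beta$ to trivialize $\omega_G$ and its tensor powers (this is exactly the content of the remark the paper places after the lemma), then show that $p, u_1, \ldots, u_{h-1}$ is a regular system of parameters and invoke the Cohen structure theorem together with Lemma~\ref{reppingbaby}.

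There is, however, a genuine gap at the step you yourself flag as carrying the content. Knowing that the common vanishing locus of $(p, v_1, \ldots, v_{h-1})$ is the closed point only makes these elements a \emph{system of parameters}; in the regular local ring $\pi_0(R)$ that does give a regular sequence and a finite extension $W(k)[[u_1,\ldots,u_{h-1}]] \subseteq \pi_0(R)$, but not surjectivity --- compare $k[[s]] \to k[[t]]$, $s \mapsto t^2$, where $t^2$ is a parameter but not a coordinate. What the isomorphism actually requires is that $p, u_1, \ldots, u_{h-1}$ have linearly independent images in the $h$-dimensional cotangent space $\mathfrak{m}/\mathfrak{m}^2$, i.e.\ the scheme-theoretic statement that on the stratum $V(p, v_1, \ldots, v_{m-1})$ the Hasse invariant $v_m$ cuts out the next stratum \emph{with nonvanishing differential}. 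Your sketch asserts this (``$v_m$ becomes the leading obstruction'') but derives only the set-theoretic stratification from it, and the two are not equivalent. To close the gap you would need the first-order deformation-theoretic computation identifying $dv_m$ with a nonzero functional on the tangent space of the height-$\geq m$ locus --- in practice this is done by exhibiting the $p$-series of the universal deformation, $[p](x) \equiv px + u_1 x^p + \cdots + u_{h-1}x^{p^{h-1}} + \cdots$, and reading off that $u_m$ is a coordinate modulo $I_m + \mathfrak{m}^2$. The remainder of your argument (the $\beta$-bookkeeping, evenness giving $\pi_*(R) \simeq \pi_0(R)[\beta^{\pm}]$, and the appeal to Lemma~\ref{reppingbaby}) is fine.
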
 

\begin{remark} Note that $\beta: \omega_G \to \Sigma^{-2}(E)$. If this is an equivalence, which is what it means to have an oriented formal group, then we can identify the tensor powers of $\omega_G$ with powers of $\beta$. \end{remark}

%\begin{proof} The functor $R \mapsto \Def_H^\star(R)$ is formally smooth. If $R \to R'$ is a surjective map between infinitestimal thickenings of $k$, then the induced map $\Def_H^\star(R) \to \Def_H^\star(R')$ is surjective. Any formal group law over $R'$ extends to a formal group law over $R$ since the Lazard ring is a polynomial. Given a pair of surjective maps $A \to B \leftarrow C$ between infinitesimal thickenings of $k$, the canonical map $\Def_H^\star(A \times_B C) \to \Def_H^\star(A) \times_{\Def_H^\star(B)} \Def_H^\star(C)$ is a bijection. $\Spec(A \times_B C)$ is obtained by gluing $\Spec(A)$ and $\Spec(C)$ along the common closed subscheme $\Spec(B),$ so giving a fomral group over $\Spec(A \times_B C)$ is equivalent to giving formal groups over $\Spec A$ and $\Spec C$ together with an isomorphism between their restrictions to $\Spec B.$ \rin{can you see the latter using ind-etale?} Now we treat the case with $\beta^{\pm 1}$. \rin{resume filling in, see orgnote above} \end{proof}

%If $Def_H$ is formally smooth, then we know its representable by $W[[R_1, ..., R_n]]$ where $n = \text{dim}(T(Def_H^\star))$.

\begin{cor} $\text{ }$ \color{blue}{  
\begin{itemize} 
\item Given $F \in \mc{M}_{\mr{fg}^1}^{=h}(k)$, and fixing $G \subseteq \Aut(F)$, then $\Def_{F}^G$ is co-represented by a ring non-canonically isomorphic to $$W[[u_1, ..., u_{h-1}]]^G.$$
\item Given $F' \in \mc{M}_{\mr{fg}^1_{s}}^{=h}(k)$, and fixing $G \subseteq \Aut(F')$, then $\Def_{F'}^G$ is co-represented by a ring non-canonically isomorphic to $$W[[u_1, ..., u_{h-1}]][\beta^{\pm 1}]^G.$$
\end{itemize}}
\end{cor}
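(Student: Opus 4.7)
The plan is to chain three results established earlier in the paper: Lemma \ref{skydive} (``skydive''), which identifies $\Def_F^G$ with the stacky quotient $(\Def_F^\star)/G$; Corollary \ref{continuousboogie} (``continuous boogie''), which translates global sections of such a quotient stack into continuous $G$-invariants; and Lemma \ref{reppingbaby}, which pins down the representing ring of $\Def_F^\star$ explicitly as $W[[u_1,\ldots,u_{h-1}]]$ in the ungraded case and $W[[u_1,\ldots,u_{h-1}]][\beta^{\pm 1}]$ in the graded case.

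First I would invoke Lemma \ref{skydive} to obtain the equivalence of stacks $\Def_F^G \simeq (\Def_F^\star)/G$, where $G \subseteq \Aut_k(F)$ is acting through the residual action on star-deformations described in the definitions section. Because $\Aut_k(F)$ has just been established to be a constant profinite group scheme (via the ind-\'etaleness of the isomorphism scheme, Lemma \ref{indetale}), the hypotheses of Corollary \ref{continuousboogie} are in force for any subgroup $G \subseteq \Aut_k(F)$. Taking $H^0$ of the structure sheaf across this equivalence yields
\[
\OO(\Def_F^G) \simeq \OO\bigl((\Def_F^\star)/G\bigr) \simeq H^0_{\mathrm{cts}}\bigl(G, \OO(\Def_F^\star)\bigr) = \OO(\Def_F^\star)^G,
\]
where invariants are taken with respect to the profinite topology on $G$ and the adic topology on the representing ring.

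Finally, substituting the representing rings from Lemma \ref{reppingbaby}, the ungraded case produces $\OO(\Def_F^G) \simeq W[[u_1,\ldots,u_{h-1}]]^G$ and the graded case produces $\OO(\Def_{F'}^G) \simeq W[[u_1,\ldots,u_{h-1}]][\beta^{\pm 1}]^G$. The non-canonicality is inherited directly from that of Lemma \ref{reppingbaby}, since the non-canonical isomorphism with a power series ring is $G$-equivariantly transported by any fixed choice of coordinates.

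The main obstacle I expect is the bookkeeping around continuity: one must check that the $G$-action on $W[[u_1,\ldots,u_{h-1}]]$ transported through the representability of Lemma \ref{reppingbaby} is continuous in the adic topology on the power series ring, so that ``$(-)^G$'' on the ring side genuinely matches the continuous invariants $H^0_{\mathrm{cts}}(G,-)$ appearing in Corollary \ref{continuousboogie}. This ought to follow formally from the Yoneda-style correspondence between continuous $G$-actions on representable functors and continuous actions on their representing objects, combined with the profinite structure on $\Aut_k(F)$; no new computation is required beyond confirming that the action factors through finite quotients on each truncation $W[[u_1,\ldots,u_{h-1}]]/\mf{m}^n$, which is automatic since $\Def_F^\star$ is the colimit of the Artinian truncations on which $\Aut_k(F)$ already acts via a finite quotient.
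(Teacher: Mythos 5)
Your proof is correct and follows essentially the same route as the paper's own: Lemma \ref{skydive} to identify $\Def_F^G$ with $(\Def_F^\star)/G$, followed by the explicit representing rings of Lemma \ref{reppingbaby}. You are in fact slightly more careful than the paper, which leaves the passage from the stack equivalence to the statement about invariant rings implicit (and cites Lemma \ref{weakgreed} where it plainly means Lemma \ref{skydive}); your explicit appeal to Corollary \ref{continuousboogie} to identify $\OO$ of the quotient with the continuous $G$-invariants, together with the continuity bookkeeping via the finite quotients acting on the Artinian truncations, supplies exactly the step the paper glosses over.
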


\begin{proof} Since we established the rings co-representing of $\Def_H^\star$ in Lemma \ref{reppingbaby}, we may then immediately apply Lemma \ref{weakgreed} which states $\Def_{H}^G \simeq \Def_H^\star/G$. The same applies to $H'$.
\end{proof}

\section{Criterion Theorem}

There are many variants of moduli problems related to formal groups. This section applies to both graded and ungraded one-dimensional formal groups equally, and we will use the notation of $\mc{M}_{\mr{fg}_1}^{\spadesuit} := \mc{M}_{\mr{fg}_1} \text{ or }\mc{M}_{\mr{fg}_1}^{s}$. Let $\Def_F^\star$ denote the deformation of $F \in \mc{M}_{\mr{fg}_1}^{\spadesuit}(k)$ in $\mc{M}_{\mr{fg}_1}^{\spadesuit}$.

\begin{question} Consider a formal group $F \in \mc{M}_{\mr{fg}_1}^{\spadesuit}(k)$ of height $h$ which is over an algebraically closed field. What is required for a stack $\mc{M}$ with an action of $G$ to model the action of $G \subseteq \Aut(F)$ on $\Def_F^\star$?
\end{question}

We opted to develop the background so thoroughly that the answer to our question falls directly into our lap. It's restating all the lemmas we abstractly set up in terms of stacks in the example of the Lubin-Tate action. Let's reap the benefits. 

\subsubsection{Geometric Modelling (One Group)}

\begin{cor} (corollary of greed) Given a prestack $\mc{M}$, and a point $X \in \mc{M}(k)$. If there exists a functor $$\mc{F}: \mc{M} \to \mc{M}_{\mr{fg}_1}^{\spadesuit}$$ with the property that it induces a $G$-equivariant equivalence $$\Def_X \simeq \Def_{\mc{F}(X)},$$ then $$\Def_{X}^G \simeq \Def_{\mc{F}(X)}^G$$ are equivalent. 
\end{cor}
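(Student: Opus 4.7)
The plan is to reduce this statement to a direct application of Lemma \ref{weakgreed} (greed). First I would parse the hypothesis that $\mc{F} \colon \mc{M} \to \mc{M}_{\mr{fg}_1}^\spadesuit$ induces a $G$-equivariant equivalence $\Def_X \simeq \Def_{\mc{F}(X)}$: this packages together exactly the two ingredients needed by the greed lemma, namely an equivalence of the $\star$-deformation problems $\Def_X^\star \simeq \Def_{\mc{F}(X)}^\star$ and the compatibility of this equivalence with the residual $G$-action coming from $G \subseteq \Aut_k(X)$ on the source and the corresponding $G \subseteq \Aut_k(\mc{F}(X))$ on the target (the hypothesis that $\mc{F}$ is $G$-equivariant ensures that the inclusion of $G$ into $\Aut_k(\mc{F}(X))$ is the expected one obtained by postcomposing with $\mc{F}$ on morphisms).

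With the hypotheses translated, the plan is simply to invoke Lemma \ref{weakgreed} to conclude $\Def_X^G \simeq \Def_{\mc{F}(X)}^G$. For completeness I would note how the argument unspools in this instance: applying the stacky quotient functor $(-)/G$ to the $G$-equivariant equivalence $\Def_X^\star \simeq \Def_{\mc{F}(X)}^\star$ preserves the equivalence (since $(-)/G$ is a functor on $G$-stacks), and then the skydive lemma (Lemma \ref{skydive}) identifies $(\Def_X^\star)/G$ with $\Def_X^G$ and symmetrically $(\Def_{\mc{F}(X)}^\star)/G$ with $\Def_{\mc{F}(X)}^G$. Chaining these identifications yields the claim.

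The main obstacle, such as it is, is purely notational: confirming that the corollary's ``$\Def_X \simeq \Def_{\mc{F}(X)}$'' is read as the star-deformation equivalence rather than an equivalence of the full deformation groupoids $\Def_X^{\Aut_k(X)}$. This interpretation is forced by consistency with the greed lemma as stated, and by the fact that the $G$-equivariance condition only has content at the level of the star-deformation problem (where the $G$-action is residual rather than already quotiented out). Once that convention is fixed, the proof is immediate and there is no genuine mathematical content beyond a direct citation of the lemmas established above; indeed, the value of the corollary lies in specializing the abstract machinery to the concrete case of modelling a point of $\mc{M}_{\mr{fg}_1}^\spadesuit$.
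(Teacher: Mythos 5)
Your proposal is correct and matches the paper's proof, which likewise simply cites Lemma \ref{weakgreed} with $\mc{N}$ specialized to $\mc{M}_{\mr{fg}_1}^{\spadesuit}$. Your additional unpacking of how the greed lemma runs through \ref{skydive} and the stacky quotient functor is consistent with the paper's earlier arguments but not needed here.
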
 

\begin{proof} 
This is an example of Lemma \ref{weakgreed} for the special cases of the stack $\mc{N}$ being one of the two cases included in $\mc{M}_{\mr{fg}_1}^{\spadesuit}.$
\end{proof}

\begin{cor}
    \label{robot time} \color{blue}{(robot time) If $\Def_X^\star \simeq \Def_{FX}^\star$ is an equivalence, and $\mc{F}$ is $G$-equivariant, then $$H^*_{coh}(\Def_X^G, \OO_{\Def_X^G}) \simeq H_{coh}^*(\Def_{\mc{F}X}^G, \OO_{\Def_{\mc{F}X}^G}).$$}
\end{cor}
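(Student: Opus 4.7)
The plan is to reduce this corollary to two ingredients that are already in hand: Lemma \ref{weakgreed} (``greed''), which promotes a $G$-equivariant equivalence of star-deformations to an equivalence of $G$-deformations, and the general principle that coherent cohomology is functorial under equivalence of stacks. In fact, as stated, this is a verbatim reformulation of the earlier Corollary \ref{robot time} with $R\Gamma$ replaced by its cohomology groups $H^*_{coh}$; once one unpacks notation there is essentially nothing new to prove.

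First I would apply Lemma \ref{weakgreed} to the hypotheses. Concretely, Lemma \ref{skydive} identifies $\Def_Y^G \simeq (\Def_Y^\star)/G$ for any $Y$ with $G \subseteq \Aut_k(Y)$, and the stacky quotient $(-)/G$ is a functor on stacks, so the given $G$-equivariant equivalence $\Def_X^\star \simeq \Def_{\mc{F}X}^\star$ descends to an equivalence of quotient stacks $\Def_X^G \simeq \Def_{\mc{F}X}^G$. The $G$-equivariance of $\mc{F}$ is what makes the quotient morphism well-defined.

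Second, I would invoke the functoriality of the structure sheaf and of derived global sections. Since $\OO_{(-)}$ is defined by right Kan extension from affines, any equivalence of stacks $\phi : \Def_X^G \xrightarrow{\simeq} \Def_{\mc{F}X}^G$ satisfies $\phi^*\OO_{\Def_{\mc{F}X}^G} \simeq \OO_{\Def_X^G}$. Applying the derived pushforward to the terminal map and passing to cohomology then yields
\[
H^*_{coh}(\Def_X^G, \OO_{\Def_X^G}) \simeq H^*_{coh}(\Def_{\mc{F}X}^G, \OO_{\Def_{\mc{F}X}^G}),
\]
which is the desired conclusion.

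The main obstacle is essentially none, since by the time one reaches this corollary Lemma \ref{weakgreed} has already absorbed the genuine content. The only point worth flagging is that one is implicitly using the convention of Section \ref{stacks} that both $\OO_{(-)}$ and its coherent cohomology transport along equivalences of stacks --- this is automatic from the right Kan extension definition, but it is the place where a fastidious reader might want a line of justification. If one wished to make the argument even more transparent, one could note that composing this step with Corollary \ref{continuousboogie} (``continuous boogie'') recovers the group-cohomological form already displayed as Corollary \ref{group robot time}, so the present statement sits halfway between the purely stacky statement and its continuous-group-cohomology avatar.
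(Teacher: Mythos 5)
Your proposal is correct and takes essentially the same route as the paper: the paper proves this corollary by observing it is a special case of the earlier general-target version of (robot time), whose proof is precisely to apply Lemma \ref{weakgreed} and then note that equivalent stacks have equivalent coherent cohomology. Your extra unpacking of the functoriality of $\OO_{(-)}$ and of derived global sections under an equivalence of stacks is just a more explicit rendering of that same step.
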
 

\begin{proof} 
This is an example of Lemma \ref{robot time} for the special cases of the stack $\mc{N}$ being one of the two cases included in $\mc{M}_{\mr{fg}_1}^{\spadesuit}.$
\end{proof}

\begin{cor}
    \label{group robot time} \color{blue}{(group time) If $\Def_X^{\star}\simeq \Def_{FX}^\star$ is an equivalence, and $\mc{F}$ is $G$-equivariant, then $$H^*_{cts}(G, \mc{O}({\Def_X^\star})) \simeq H_{cts}^*(G, \mc{O}(\Def_{\mc{F}X}^\star)).$$}
\end{cor}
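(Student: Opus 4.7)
The plan is to chain together three results that were already established in the paper. First, I would invoke the geometric modelling corollary (robot time) applied to our setup: because $\Def_X^\star \simeq \Def_{\mc{F}X}^\star$ is a $G$-equivariant equivalence of prestacks, the stacky quotients $\Def_X^G \simeq \Def_X^\star/G$ and $\Def_{\mc{F}X}^G \simeq \Def_{\mc{F}X}^\star/G$ (via skydive, Lemma~\ref{skydive}) are equivalent as stacks, hence have naturally isomorphic coherent cohomology
\[
R\Gamma(\Def_X^\star/G,\OO) \simeq R\Gamma(\Def_{\mc{F}X}^\star/G,\OO).
\]

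Second, I would convert coherent cohomology of the quotient stack into continuous group cohomology on each side separately. Concretely, apply the continuous boogie corollary (Corollary~\ref{continuousboogie})—itself a direct consequence of the site-for-sore-eyes lemma (Lemma~\ref{howitbe})—which gives, for the constant profinite group scheme $G$ acting on the affine formal scheme $\Def_X^\star$,
\[
R\Gamma(\Def_X^\star/G,\OO_{\Def_X^\star/G}) \simeq R\Gamma_{cts}(G,\OO(\Def_X^\star)),
\]
and similarly for $\Def_{\mc{F}X}^\star$. Both applications are legal because Lemma~\ref{reppingbaby} guarantees that $\Def_X^\star$ and $\Def_{\mc{F}X}^\star$ are affine (representable by a Witt-vector power series ring), and $G$ is a constant profinite group by the discussion of Section~7.4 ($\Aut(F)$ is constant thanks to the ind-\'etaleness of the isomorphism scheme, Lemma~\ref{indetale}).

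Stringing the three equivalences together yields
\[
R\Gamma_{cts}(G,\OO(\Def_X^\star)) \simeq R\Gamma(\Def_X^\star/G,\OO) \simeq R\Gamma(\Def_{\mc{F}X}^\star/G,\OO) \simeq R\Gamma_{cts}(G,\OO(\Def_{\mc{F}X}^\star)),
\]
and passing to cohomology in each degree gives the claim. The only substantive part of the argument is checking that the $G$-equivariance travels all the way through: the middle equivalence of stacks must be compatible with the two projection maps to $BG$ so that the identifications with continuous cochains on each side match. This is the step I expect to require the most care—but it is already built into Lemma~\ref{weakgreed}, since a $G$-equivariant equivalence of $G$-torsors over the basepoint produces an equivalence of the induced quotient stacks \emph{over} $BG$, and the continuous boogie identification is functorial in maps over $BG$. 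With that observation, no further work is needed, and the corollary reduces to a two-line citation of Corollary~\ref{robot time} and Corollary~\ref{continuousboogie}, exactly as the author's stub proof indicates.
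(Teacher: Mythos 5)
Your proposal is correct and follows essentially the same route as the paper: the paper's proof is precisely the two-line citation of Corollary (robot time) together with Corollary (continuous boogie)/Lemma (site for sore eyes), which you have simply unpacked and chained together with the appropriate sanity checks on affineness and constancy of $G$.
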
 

\begin{proof} 
This is an example of Lemma \ref{group robot time} for the special cases of the stack $\mc{N}$ being one of the two cases included in $\mc{M}_{\mr{fg}_1}^{\spadesuit}.$
\end{proof}

\subsubsection{Two Tower Isomorphisms (Two Groups)}

We now consider the case of multiple groups involved. We again fix $\mc{F}(X)$ to be a height $h$ formal group law over $k$. 

\begin{question} Consider a formal group $\mc{F}X \in \mc{M}_{\mr{fg}_1}^{\spadesuit}$ of height $h$ over a field $k$ such that $k \supseteq \F_q$. What is required for a stack $\mc{M}$ with an action of $G'$ to model the action of $G \subseteq \Aut(F)$ on $\Def_{\mc{F}X}^\star$?
\end{question}

\begin{cor} 
Let $N$, $\Def_{\mc{F}X}^\star$, and $M$ be prestacks, such that $M$ is a $G \times G'$-torsor: a $G$-torsor over $N$ and a $G'$-torsor over $\Def_{\mc{F}X}^\star$.
% https://q.uiver.app/#q=WzAsNCxbMCwxLCJBIl0sWzEsMV0sWzIsMSwiQiJdLFsxLDAsIk0iXSxbMywwLCJHIiwyXSxbMywyLCJHJyJdXQ==
\[\begin{tikzcd}
	& M \\
	N & {} & \Def_{\mc{F}X}^\star
	\arrow["G"', from=1-2, to=2-1]
	\arrow["{G'}", from=1-2, to=2-3]
\end{tikzcd}\]

\noindent then, we have an isomorphism of quotient stacks $$N/G' \simeq \Def_{\mc{F}X}^{G}.$$
\end{cor}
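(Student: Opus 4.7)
The plan is to deduce this statement by composing two earlier results: the two-tower Lemma \ref{spanny} to swap the roles of $G$ and $G'$ across the correspondence $M$, and the skydive Lemma \ref{skydive} to recognize the resulting quotient $\Def_{\mc{F}X}^\star/G$ as the deformation groupoid $\Def_{\mc{F}X}^G$.

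First I would unpack the hypothesis: the span
\[
\begin{tikzcd}
	& M \\
	N & & \Def_{\mc{F}X}^\star
	\arrow["G"', from=1-2, to=2-1]
	\arrow["{G'}", from=1-2, to=2-3]
\end{tikzcd}
\]
is precisely the setup of Lemma \ref{spanny} with $N' = \Def_{\mc{F}X}^\star$, the $G\times G'$-action on $M$ descending to a residual $G'$-action on $N = M/G$ and a residual $G$-action on $\Def_{\mc{F}X}^\star = M/G'$. Applying that lemma gives the intermediate isomorphism of stacky quotients
\[
N/G' \;\simeq\; \Def_{\mc{F}X}^\star/G.
\]

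Next I would invoke Lemma \ref{skydive} (with $X$ replaced by $\mc{F}X$ and the ambient subgroup $G \subseteq \Aut_k(\mc{F}X)$, which is what the original Lubin--Tate embedding provides): it gives
\[
\Def_{\mc{F}X}^G \;\simeq\; \Def_{\mc{F}X}^\star/G.
\]
Stringing the two isomorphisms together yields $N/G' \simeq \Def_{\mc{F}X}^G$, which is the claim.

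There is no genuine obstacle here, since the statement is just the concatenation of Lemmas \ref{spanny} and \ref{skydive}; the only thing worth flagging is the bookkeeping that the $G$-action on $\Def_{\mc{F}X}^\star$ induced by the torsor structure of $M \to \Def_{\mc{F}X}^\star$ agrees with the Lubin--Tate-style $G$-action used in Definition of $\Def_{\mc{F}X}^G$. This compatibility is part of what it means for $M$ to realize the $G \times G'$ span, so once the hypotheses are interpreted with that matching in mind the two lemmas slot together without further work.
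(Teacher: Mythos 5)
Your proposal matches the paper's proof: the paper simply cites Lemma \ref{spanny} with $N' = \Def_{\mc{F}X}^\star$, leaving the identification $\Def_{\mc{F}X}^\star/G \simeq \Def_{\mc{F}X}^G$ via Lemma \ref{skydive} implicit, which you correctly make explicit. Same approach, slightly more carefully spelled out.
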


\begin{proof} 
This is an example of Lemma \ref{spanny} for the special cases of $N' = \Def_{\mc{F}X}^\star$.
\end{proof}

\begin{remark} Note that $G$ and $G'$ are correctly written as stated, we are relating the quotients by the \textit{residual} actions above. \end{remark}

\begin{cor} \color{blue}{(bi robot time) Given $N, \Def_{\mc{F}X}^\star, M$ as above, then $$R\Gamma(N/G', \OO_{N/G'}) \simeq R\Gamma(\Def_{\mc{F}X}^G, \OO_{\Def_{\mc{F}X}^G}).$$}
\end{cor}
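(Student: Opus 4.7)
The plan is to reduce this statement to the two-tower isomorphism of quotient stacks that was just established in the previous corollary, together with the functoriality of derived global sections. Specifically, the preceding corollary (specializing Lemma \ref{spanny} to the case $N' = \Def_{\mc{F}X}^\star$) already produces an equivalence of stacks
$$N/G' \simeq \Def_{\mc{F}X}^{G},$$
so the only remaining content is to pass from this equivalence of stacks to an equivalence of their structure sheaf cohomologies.

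First I would invoke the previous corollary to identify $N/G'$ and $\Def_{\mc{F}X}^{G}$ as the same stack up to canonical equivalence. Next, I would observe that under this equivalence the structure sheaves $\OO_{N/G'}$ and $\OO_{\Def_{\mc{F}X}^{G}}$ are identified: both are obtained by right Kan extension of the assignment $\Spec A \mapsto A$ along the inclusion $\mr{Aff} \hookrightarrow \mr{Stk}$, and this right Kan extension is a functor, hence it respects equivalences of stacks. In particular any equivalence $f : N/G' \xrightarrow{\simeq} \Def_{\mc{F}X}^{G}$ induces a canonical isomorphism $f^*\OO_{\Def_{\mc{F}X}^{G}} \simeq \OO_{N/G'}$.

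Finally, I would apply $R\Gamma$. Since $R\Gamma(-, -)$ is functorial in the pair (stack, quasi-coherent sheaf), the equivalence $f$ together with the identification of structure sheaves produces the desired equivalence
$$R\Gamma(N/G', \OO_{N/G'}) \simeq R\Gamma(\Def_{\mc{F}X}^{G}, \OO_{\Def_{\mc{F}X}^{G}}).$$
No obstacle is anticipated here: this is the exact same pattern as the general \emph{bi robot time} corollary proved earlier, just instantiated in the special case $N' = \Def_{\mc{F}X}^\star$, and the proof is essentially a one-liner of the form ``apply the previous corollary, then push through $R\Gamma(-, \OO_{-})$.''
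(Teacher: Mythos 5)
Your proposal is correct and matches the paper's approach: the paper's proof is literally a one-line citation of the general \emph{bi robot time} corollary (itself deduced from the two-tower stack equivalence of Lemma \ref{spanny} by passing to cohomology), specialized to $N' = \Def_{\mc{F}X}^\star$. Your extra remarks about identifying the structure sheaves via functoriality of the right Kan extension are a harmless elaboration of the same argument.
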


\begin{proof} 
This is an example of Lemma \ref{bi robot time} for the special cases of $B = \Def_{\mc{F}X}^\star$.
\end{proof}

\begin{cor} \color{blue}{ (bi group time) Given $N, \Def_{\mc{F}X}^\star, M$ as above, then $$H^*_{cts}(G', \OO(N)) \simeq H^*_{cts}(G, \OO(\Def_{\mc{F}X}^\star)).$$}
\end{cor}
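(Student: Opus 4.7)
The plan is to chain together two results already established in the paper: the previous bi robot time corollary, which identifies the coherent cohomology of the quotient stacks $N/G'$ and $\Def_{\mc{F}X}^G$, together with Lemma \ref{howitbe} (site for sore eyes), which converts coherent cohomology of a quotient stack $X/H$ into continuous group cohomology $H^*_{cts}(H, \mc{F}(X))$. So the proof should simply assemble these two facts into a three-link chain.

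First I would apply Lemma \ref{howitbe} to the left hand side with $X = N$, $H = G'$ and $\mc{F} = \OO_{N/G'}$, the residual $G'$-action on $N$ being precisely the one supplied by the two-tower setup; this yields $H^*(N/G', \OO_{N/G'}) \simeq H^*_{cts}(G', \OO(N))$. Next I would rewrite $\Def_{\mc{F}X}^G \simeq \Def_{\mc{F}X}^\star / G$ using Lemma \ref{skydive}, and apply Lemma \ref{howitbe} again with $X = \Def_{\mc{F}X}^\star$ and $H = G$ to get $H^*(\Def_{\mc{F}X}^G, \OO_{\Def_{\mc{F}X}^G}) \simeq H^*_{cts}(G, \OO(\Def_{\mc{F}X}^\star))$. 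Finally, the bi robot time corollary supplies the middle equivalence at the level of coherent cohomology of the two quotient stacks, and composing the three isomorphisms produces the claim.

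The main thing requiring care is confirming the applicability of Lemma \ref{howitbe} on both sides. One needs that $G$ and $G'$ are constant profinite group schemes so that Corollary \ref{continuousboogie} applies, and that the structure sheaves in question are quasi-coherent on their respective quotient stacks; both are built into the two-tower hypothesis and the running assumptions on automorphism groups of formal groups (recall that $\Aut_k(F)$ was shown to be a constant profinite group scheme). Beyond that check, the argument is essentially bookkeeping, which is why the stated proof is allowed to collapse to a single sentence pointing at Lemma \ref{howitbe}.
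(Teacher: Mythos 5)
Your proposal is correct and follows essentially the same route as the paper: the paper's proof simply cites the general two-tower ``bi group time'' corollary with $N' = \Def_{\mc{F}X}^\star$, and that corollary is in turn proved by exactly the chain you describe --- Lemma \ref{howitbe} applied on each side to convert continuous group cohomology into coherent cohomology of the quotient stacks, linked through the stack-level equivalence $N/G' \simeq N'/G$ of Lemma \ref{spanny}. Your version merely unpacks this one-line citation, including the (correct) use of Lemma \ref{skydive} to identify $\Def_{\mc{F}X}^G$ with $\Def_{\mc{F}X}^\star/G$ and the check that the groups are constant profinite group schemes.
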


\begin{proof} 
This is an example of Lemma \ref{bi group time} for the special cases of $N' = \Def_{\mc{F}X}^\star$.
\end{proof}

\bibliographystyle{alpha}
\bibliography{Biblio}

\end{document}